\newcommand{\R}{\mathbb{R}}
\newcommand{\N}{\mathbb{N}}
\newcommand{\T}{\mathcal{T}}
\newcommand{\bS}{\mathbb{S}}
\newcommand{\HH}{\mathcal{H}}
\newcommand{\DD}{\mathcal{D}}
\newcommand{\M}{\mathcal{M}}
\newcommand{\dd}{\mathrm{d}}
\newcommand{\di}{\mathrm{d}}
\newcommand{\D}{\mathrm{D}}
\newcommand{\E}{\mathcal{E}}
\newcommand{\SBV}{\mathrm{SBV}}
\newcommand{\BV}{\mathrm{BV}}
\newcommand{\SBD}{\mathrm{SBD}}
\newcommand{\GSBV}{\mathrm{GSBV}}
\newcommand{\GSBD}{\mathrm{GSBD}}
\newcommand{\U}{\mathcal{U}}
\newcommand{\F}{\mathcal{F}}
\newcommand{\K}{\mathcal{K}}
\newcommand{\strain}{\boldsymbol{\epsilon}}
\newcommand{\stiffness}{\boldsymbol{C}}
\newcommand{\stiff}{\boldsymbol{c}}
\newcommand{\1}{\mathbbm{1}}
\newcommand{\G}{\mathcal{G}}
\newcommand{\p}{\mathrm{\Pi}}
\newcommand{\VV}{\mathcal{X}}
\newcommand{\UU}{\mathcal{X}}
\newcommand{\RR}{\mathcal{R}}
\newcommand{\TOL}{\mathtt{TOL}}
\newcommand{\vv}{\mathbf{v}}
\newcommand{\rr}{\mathbf{r}}
\newcommand{\cal}{\mathcal}
\newcommand{\Om}{\Omega}
\newcommand{\om}{\omega}
\newcommand{\eps}{\varepsilon}
\newcommand{\wto}{\rightharpoonup}
\newcommand{\half}{\frac{1}{2}}
\newcommand{\tforall}{\text{for all }}
\newcommand{\coloneq}{\mathrel{\mathop:}=}
\newcommand{\abs}[1]{\lvert #1 \rvert}
\newcommand{\bigabs}[1]{\bigl\lvert #1 \bigr\rvert}
\newcommand{\Bigabs}[1]{\Bigl\lvert #1 \Bigr\rvert}
\newcommand{\biggabs}[1]{\biggl\lvert #1 \biggr\rvert}
\newcommand{\norm}[1]{\lVert #1 \rVert}
\newcommand{\bignorm}[1]{\bigl\lVert #1 \bigr\rVert}
\newcommand{\Bignorm}[1]{\Bigl\lVert #1 \Bigr\rVert}
\newcommand{\biggnorm}[1]{\biggl\lVert #1 \biggr\rVert}
\newcommand{\scprod}[2]{\langle #1 , #2 \rangle}
\newcommand{\llbracket}{[\!\!\:[}
\newcommand{\rrbracket}{]\!\!\:]}
\DeclareMathOperator{\diag}{diag}
\DeclareMathOperator*{\argmin}{arg\,min}
\DeclareMathOperator{\dist}{dist}
\DeclareMathOperator*{\Glim}{\Gamma-lim}
\DeclareMathOperator*{\Gliminf}{\Gamma-lim\,inf}
\DeclareMathOperator*{\Glimsup}{\Gamma-lim\,sup}
\DeclareMathOperator{\diver}{div}
 \theoremstyle{plain}
 \newtheorem{theorem}{Theorem}[section]
 \newtheorem{lemma}[theorem]{Lemma}
 \newtheorem{proposition}[theorem]{Proposition}
 \theoremstyle{definition}
 \newtheorem{definition}[theorem]{Definition}
 \theoremstyle{remark}
 \newtheorem{remark}[theorem]{Remark}
\numberwithin{equation}{section}
\begin{document}

\title[Dimension-Reduction for Brittle Fractures on Thin Shells]{A Dimension-Reduction Model for Brittle Fractures on Thin Shells with Mesh Adaptivity}

\author[S. Almi]{Stefano Almi}
\address{Faculty of Mathematics, University of Vienna, Oskar-Morgenstern-Platz~1\\1090 Vienna, Austria}
\email{stefano.almi@univie.ac.at}
\author[S. Belz]{Sandro Belz}
\address{Department of Mathematics, Technical University Munich, Boltzmannstr.~3\\85748 Garching (Munich), Germany}
\email{sandro.belz@ma.tum.de}
\author[S. Micheletti]{Stefano Micheletti}
\author[S. Perotto]{Simona Perotto}
\address{MOX, Department of Mathematics, Politecnico di Milano, Piazza Leonardo da Vinci 32\\20133 Milano, Italy}
\email{stefano.micheletti@polimi.it, simona.perotto@polimi.it}

\begin{abstract}
In this paper we derive a new two-dimensional brittle fracture model for thin shells via dimension reduction, where the admissible displacements are only normal to the shell surface.
The main steps include to endow the shell with a small thickness, to express the three-dimensional energy in terms of the variational model of brittle fracture in linear elasticity, and
to study the $\Gamma$-limit of the functional as the thickness tends to zero.

The numerical discretization is tackled by first approximating the fracture through a phase field, following an Ambrosio-Tortorelli like approach, and then resorting to
an alternating minimization procedure, where the irreversibility of the crack propagation is rigorously imposed via an inequality constraint.
The minimization is enriched with an anisotropic mesh adaptation driven by an a posteriori error estimator, which allows us to sharply track the whole
crack path by optimizing the shape, the size, and the orientation of the mesh elements.

Finally, the overall algorithm is successfully assessed on two Riemannian settings and proves not to bias the crack propagation.
\end{abstract}

\keywords{dimension reduction; brittle fracture on thin shells; phase field approximation; free discontinuity problems; anisotropic mesh adaptation; finite elements.}
\subjclass{49M25, 65K15, 65N50, 74G65, 74K25, 74R10, 74S05}

\maketitle

\section{Introduction}
The problem of finding reasonable two-dimensional  models of elasticity for plates and shells dates back to more than one hundred years ago with contributions of J. Bernoulli, L. Euler, G. R. Kirchhoff, T. von Kármán, and many others (see, e.g., the Kirchhoff-Love plate theory and the Föppl-von-Kármán equations in \cite{Foe1907,Kar1910,Kir1850,Lov1888/89}).

In recent works, a two dimensional model is usually obtained as a limit of a three dimensional one: the target surface (shell or plate) is endowed with a fictitious thickness $\rho>0$ and the limit as $\rho\to 0$ is studied. Considering the variational framework of elasticity, such a limit is computed in terms of $\Gamma$-convergence (see \cite{Dal1993}).
In the context of linearized elasticity, a comprehensive work by Ph.G. Ciarlet about two-dimensional models can be found in \cite{Cia1997} for thin plates and in \cite{Cia2000} for thin shells. In these monographs, the convergence of the solution to the three-dimensional model is considered, avoiding the notion of $\Gamma$-convergence. A justification of the above results in terms of $\Gamma$-convergence has been provided successively in \cite{Gen1999}. Related works in the case of non-linear elasticity can be found, for instance, in \cite{FriJamMorMue2003,FriJamMue2002,FriJamMue2006}.

In this paper, we develop and analyze a new two-dimensional model of brittle fractures on thin shells, moving from the variational theory of brittle fractures in linearly elastic materials (see \cite{FraMar1998}). Accordingly, the total energy of a body  $U \subset \R^3$ subject to a displacement $u\colon U \to \R^3$  is given by
\begin{equation}
  \label{eq:intro1}
 \half \int_{U} \hat\stiffness \hat\strain(u): \hat\strain(u) \,\dd x + \kappa \HH^2(J_u),
\end{equation}
where $\hat\stiffness$ is the stiffness tensor, $\hat\strain(u)$ stands for the symmetric gradient of $u$, $J_u$ is the jump set of $u$, $\HH^2$ denotes the two-dimensional Hausdorff measure, and $\kappa > 0$ is the toughness of the material. Because of compactness issues, the natural domain of definition of functional \eqref{eq:intro1} is $\SBD(U)$ or $\GSBD(U)$, the space of (generalized) special  functions of bounded deformation. We refer to \cite{AmbCosDal1997,ChaCri2019,Dal2013} for further details on these spaces. In this setting, we can find a dimension reduction result in \cite{BabHen2016}, where the authors investigate thin films bonded to a stiff substrate.
In case of nonlinear or anti-planar elasticity, where the bulk energy in \eqref{eq:intro1} is expressed in terms of the full gradient $\nabla u$, the domain of the energy functional simplifies to~$\SBV(U)$ or $\GSBV(U)$ (for details on the theory of these spaces see \cite{AmbFusPal2000}). Such an approach has been used to investigate dimension reduction problems in \cite{Bab2006,Bab2008,BraFon2001}. However,  all the cited works are obtained for a planar setting, i.e., the target two-dimensional surface is a subset of~$\R^2$.

The main contribution of this paper is the derivation of a brittle fracture model for general surfaces. As in \eqref{eq:intro1}, we stick to linearized elasticity.  Analogously to the anti-plane shear setting, which has been the first one tackled in the variational formulation of fractures (see \cite{DalToa2002}), we only consider displacement fields normal to the surface. The advantage of this choice is that the displacement field can be described by a scalar function, since its direction is fixed, so that we can still adopt the space~$\GSBV$. We defer the general case to future work.

In more detail, in Section~\ref{sec:two-dimensional-model} we introduce the geometric setting by considering a two-dimensional surface~$\phi(\om)\subset \R^3$, where $\om\subset\R^{2}$ is open, bounded, with Lipschitz boundary, and $\phi\colon \om\to \R^3$ is an immersion. We endow this surface with a thickness $\rho>0$, so that our reference configuration becomes $\Phi(\Om_{\rho})$, with $\Om_{\rho}\coloneq \om \times (-\frac\rho{2}, \frac{\rho}{2})$ and $\Phi$ a suitable extension of $\phi$.
We start with a strong formulation of brittle fracture, where a state of the system is described by a pair displacement-fracture $(u, K)$ for $K\subseteq \Phi(\Om_{\rho})$ closed and $u\in C^{1}(\Phi(\Om_\rho)\setminus K;\R^{3})$. In this setting, we express the functional~\eqref{eq:intro1} in curvilinear coordinates on $\Om_{\rho}$. After a second change of variables, we remove the dependence of the integration domain on the thickness, passing from~$\Om_{\rho}$ to~$\Om_{1}$. Then, we restrict the admissible displacements to those which are normal to the surface. As a standard approach in free-discontinuity problems,\cite{AmbFusPal2000} the functional is relaxed to~$\GSBV(\Om_1)$. Section~\ref{sec:shellmodel-model} is devoted to the $\Gamma$-convergence analysis as the thickness tends to zero. The limit functional will be defined for $u\in \GSBV(\Om_1)$ independent of~$x_3$ by
\begin{equation}
  \label{eq:intro2}
   \half \int_{\Om_1} b \abs{u}^2 \,\dd x + \frac{\mu}{2} \int_{\Om_1} \nabla u^\top A \nabla u  \,\dd x
  + \kappa \int_{J_u} \sqrt{\nu_u^\top A \nu_u \sqrt{a}} \,\dd \HH^2,
\end{equation}
where~$A$ is a symmetric positive definite matrix related to the metric tensor of~$\phi(\omega)$,~$b$ is a function of the stiffness~$\hat\stiffness$ and of the curvature of the surface, $\mu>0$ is the second Lam\'e coefficient, and~$\nu_u$ is the approximate unit normal to~$J_u$.
In contrast to the Euclidean setting, the geometry of the surface and the magnitude of the displacement~$\abs{u}$ directly contribute to the energy of the elastic shell due to curvature effects. Moreover, all the quantities in~\eqref{eq:intro2} are independent of~$x_3$, so that the integrals could be written on~$\om$.

Section~\ref{cha:numerics} introduces the regularized reduced model based on a phase-field approximation of~\eqref{eq:intro2} in the sense of L.~Ambrosio and V.M.~Tortorelli (see \cite{AmbTor1990,AmbTor1992})
\begin{align*}
  \F_\eps (u,v) \coloneq  \half & \int_\om b \abs{u}^2 \,\dd x + \frac{\mu}{2} \int_\om (v^2 + \eta_\eps) \nabla u^\top A \nabla u \,\dd x \\
  & + \kappa \int_\om \bigg[ \frac{1}{4\eps}  (1 - v)^2 \sqrt{a} +  \eps \nabla v^\top A \nabla v \bigg] \,\dd x
\end{align*}
for $u\in H^1(\om)$, $v\in H^1(\om;[0,1])$. Loosely speaking,~$v$ is a regularization of the crack set such that where~$v$ is close to one the material is sound, while where $v\ll 1$ a fracture is detected.

The minimization of the functional~$\F_{\eps}$ is used to simulate the fracture process driven by a time dependent boundary condition~$g$.
Following \cite{AlmBelNeg2019}, according to
a quasi-static approximation, at each time~$t_i$ a new state $(u(t_i), v(t_i))$ of the thin shell is computed as the limit as $j\to \infty$ of the alternating minimization
\begin{align}
  \label{eq:intro-minu}
  u_{j} &\coloneq \argmin \bigl\{ \F_\eps (u, v_{j-1}):  u\in H^1(\om),  u= g (t_{i}) \text{ on } \partial\om \bigr\}, \\[1mm]
  \label{eq:intro-minv}
  v_{j} &\coloneq \argmin \biggl\{\F_\eps (u_{j} ,v) + \frac{\alpha}{2\tau} \norm{v-v(t_{i-1})}^2_{L^2(\om)} : v\in H^1(\om), v \le v (t_{i-1}) \biggr\},
\end{align}
where $\alpha>0$ is a fixed parameter and $\tau>0$ is the time increment. In particular, the new state $(u(t_i), v(t_i))$ is a critical point of $\F_{\eps}(u, v) +  \tfrac{\alpha}{2\tau} \norm{v-v(t_{i-1})}^2_{L^2(\om)}$. We refer to Definitions \ref{def:discrete-crit-point} and \ref{def:cont-crit-point} and Proposition \ref{prop:critical-point} for further details.

We notice that the inequality constraint in \eqref{eq:intro-minv} takes care of the irreversibility condition (similar as in \cite{Gia2005,KneNeg2017,KneRosZan2013}), i.e., no healing of the crack is allowed.
As in \cite{AlmBelNeg2019}, the presence of an $L^{2}$-penalization in \eqref{eq:intro-minv} ensures the convergence to a unilateral gradient flow in the time continuous limit. Instead, to approximate a quasi-static evolution of the crack as in \cite{AlmBel2019,AlmNeg2019,BouFraMar2000,BurOrtSue2010,KneNeg2017,KneRosZan2013},
we choose~$\alpha$ small enough.

Following \cite{ArtForMicPer2015}, we couple the alternating minimization with an anisotropic mesh adaptation procedure. The rationale is that the phase field~$v$ is close to one in large portions of the domain, while it exhibits very steep gradients to reach zero in a thin neighborhood of the crack. For this reason, the mesh needs to be very fine only across the crack. As an alternative, to ensure accuracy, one should resort to a very fine uniform grid.
This might be prohibitive from a computational point of view, whereas an adaptive mesh significantly contains the computational effort of the algorithm. Moreover, compared to isotropic adapted meshes (see \cite{BurOrtSue2010,BurOrtSue2011}),  anisotropic grids further improve the efficiency of the numerical scheme, since the triangles can be stretched along the crack.

Since the alternating minimization \eqref{eq:intro-minu}--\eqref{eq:intro-minv} is discretized in a
finite element setting (as in \cite{ArtForMicPer2015,BurOrtSue2010}), in Section~\ref{sec:residual-estimate} we derive an anisotropic a posteriori error estimator to
measure the distance from an exact critical point. This estimator drives the generation of the new anisotropic adapted mesh relying on a metric based strategy proposed in \cite{ForPer2003,MicPer2008,MicPer2011}, as detailed in Section~\ref{sec:mesh-construction}. Compared to the numerical approaches of \cite{Bou2007a,BouFraMar2000,BurOrtSue2010,ArtForMicPer2015,ArtForMicPer2015a}, the main novelty is that we now take care of the inequality constraint in~\eqref{eq:intro-minv}.
This implies that the Euler-Lagrange conditions satisfied by a critical point $(u, v)$ of $\F_{\eps}(u, v) +  \tfrac{\alpha}{2\tau} \norm{v-v(t_{i-1})}^2_{L^2(\om)}$ are expressed by a variational inequality rather than an equality, in contrast to \cite{ArtForMicPer2015} where a penalization of the
irreversibility condition is adopted and to \cite{Bou2007a,BouFraMar2000} where~$v$ is set to~$0$ where~$v(t_{i-1})$ is below a certain threshold.

Finally, in Section~\ref{sec:num-examples} we assess the proposed model and the anisotropic discretization on two non-Euclidean settings, i.e., a piece of a cylinder and a piece of a sphere. This verification allows us to establish the reliability of the new dimensionally reduced brittle fracture model and of the anisotropic mesh adaptation procedure, which does not bias the evolution of the crack path.


\section{The Two-Dimensional Model}
\label{sec:two-dimensional-model}
Before providing the technical details, we clarify some basic notation.

Given an open subset $U \subset \R^n$, we denote the space of \emph{functions of bounded variation} by~$\BV(U)$ and the space of \emph{special functions of bounded variation} by~$\SBV(U)$. The set of \emph{generalized special functions of bounded variation} is indicated by~$\GSBV(U)$. Furthermore, we define the following function spaces:
\begin{align*}
	\SBV^2 (U) &\coloneq \bigl\{ u \in \SBV (U) \colon \nabla u \in L^2 (U), \HH^{n-1} (S_u) < \infty \bigr\} \,, \\
	\GSBV^2 (U) &\coloneq \bigl\{ u \in \GSBV (U) \colon \nabla u \in L^2 (U), \HH^{n-1} (S_u) < \infty  \bigr\} \,,
\end{align*}
where~$\nabla{u}$ denotes the approximate gradient of~$u$,~$S_{u}$ is the discontinuity set of~$u$, and~$\HH^{n-1}$ stands for the $(n-1)$-dimensional Hausdorff measure.  We refer to \cite{AmbFusPal2000,EvaGar1992} for all the definitions and details on the theory of functions of bounded variation. We recall here that, for $u \in \GSBV(U)$, the set~$S_{u}$ is $\HH^{n-1}$-rectifiable. We will denote by~$\nu_u$ the approximate unit normal to~$S_u$, whereas, for a generic rectifiable set~$K$, we denote by~$\nu_K$ the associated approximate unit normal.
We further notice that~$\GSBV^2(U)$, unlike~$\GSBV(U)$, is a vector space (see \cite{DalFraToa2005}).

Throughout the paper we systematically use the Einstein summation convention, where Greek indices take values~$1$ and~$2$, and Latin indices run form $1$ to $3$.

\subsection{Geometric Setting}
\label{sec:shellmodel-geometric-setting}
Let $\omega \subset \R^2$ be an open and bounded set, and let  $\phi \in C^2(\bar\om ; \R^3)$ be an injective immersion, i.e., the tangent vectors $a_\alpha = \partial_\alpha \phi$ are linearly independent.
Defining the vector $a_3 \coloneq \frac{a_1 \times a_2}{\norm {a_1 \times a_2}}$, normal to the surface~$\phi (\omega)$, we obtain the basis $\{a_1,a_2,a_3\}$ of~$\R^3$. In Figure~\ref{fig:surface} we find an illustration of this configuration.
The contravariant basis~$\{a^i\}$ is defined by $a^i \cdot a_j= \delta^i_j$, where~$\delta^i_j$ denotes the Kronecker delta, $a_3 = a^3$.
The covariant components of the metric tensor are given by $a_{\alpha \beta} \coloneq a_\alpha \cdot a_\beta$.  We set $(a^{\alpha \beta}) \coloneq (a_{\alpha\beta})^{-1}$ which is its contravariant component matrix. Note that $a^{\alpha \beta} = a^\alpha \cdot a^\beta$. Moreover, we simply define $a\coloneq \det(a_{ij})$.

\begin{figure}
	\input{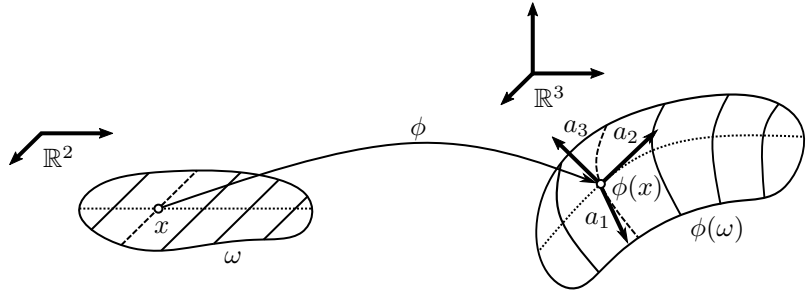}
	\caption{\label{fig:surface}Geometric setting of the surface.}
\end{figure}

The covariant components~$b_{\alpha \beta}$, the mixed components~$b_\alpha^\beta$ of the curvature tensor,
and the Christoffel symbols~$\Gamma^\sigma_{\alpha\beta}$ are defined by
\begin{equation}
  \label{eq:curvature}
  b_{\alpha \beta} \coloneq a_3 \partial_{\alpha} a_\beta\,, \qquad b^\alpha_\beta \coloneq a^{\alpha \sigma}  b_{\sigma \beta} \,, \qquad \Gamma^\sigma_{\alpha\beta} \coloneq a^\sigma \partial_\alpha a_\beta\,,
\end{equation}
respectively. Notice that we omit the dependence on spatial variable when not explicitly needed.

\begin{remark}
  By the assumptions on $\phi$, we obtain that there exist two positive constants $c$ and $C$, both independent of $x \in \om$, such that
  \begin{equation}
    \label{eq:metric-norm-equivalence}
    c \abs{\zeta}^2  < a_{\alpha \beta} \zeta^{\alpha} \zeta^{\beta} < C \abs{\zeta}^2 \quad \tforall \zeta \in \R^2 \,.
  \end{equation}
  We further make use of the continuity of $\phi$ on the compact set $\bar \om$ to obtain upper and lower bounds for all the quantities in \eqref{eq:curvature}.

  In this work we only deal with manifolds that are covered by one single chart~$\phi$. To deal with more complex manifolds, e.g., compact manifolds, such as a sphere or a torus, we have to resort to more than one chart, each one satisfying \eqref{eq:metric-norm-equivalence}, and then to glue them properly.
\end{remark}

We now modify the surface $\phi (\omega)$ by adding a thickness, $\rho>0$, as illustrated in Figure~\ref{fig:thick_surface}.
Thus, we define $\Om_\rho \coloneq \omega \times \bigl(-\frac{\rho}{2}, \frac{\rho}{2}\bigr)$ and the map
$\Phi \colon \Om_\rho \to \R^3$ by
\begin{equation}\label{Phiona}
  \Phi (x) \coloneq \phi (x_1, x_2) + x_3 a_3 \quad \tforall x = (x_1,x_2,x_3) \in \Om_\rho \,,
\end{equation}
with $\phi (\omega) = \Phi(\omega \times \{ 0 \})$, that is,~$\phi(\om)$ is the middle surface of $\Phi(\Om_\rho)$.
We recall that in view of Theorem~3.1-1 in \cite{Cia2000} it is not restrictive to assume that $\Phi$ is a diffeomorphism.

Concerning the notation related to $\Phi(\Om_\rho)$, symbols with or without a hat are associated with the original Cartesian ($\Phi (\Om_\rho)$) or curvilinear ($\Om_\rho$) coordinate system, respectively.
In particular, it is understood that $x\in \Om_\rho$ with $\hat x = \Phi(x)$ when related in the same statement.
We define the covariant basis $g_i  \coloneq \partial_i \Phi$ and the corresponding metric tensor $g_{ij}  \coloneq g_i \cdot g_j$. By \eqref{Phiona}, we obtain
\begin{equation*}
  g_\alpha = a_\alpha + x_3 \partial_\alpha a_3 \quad \text{and} \quad g_3 = a_3 = a^3 = g^3 \,.
\end{equation*}
The contravariant basis $\lbrace g^i\rbrace$ denotes the dual basis of the covariant basis,
i.e., $g_i \cdot g^j = \delta_i^j$.
It follows that the inverse of $(g_{ij})$ is given by $g^{ij} \coloneq g^i \cdot g^j$. Additionally, we define $g \coloneq \det (g_{ij})$.
For the mapping $\Phi$, we also introduce the corresponding Christoffel symbols, denoted by $\Lambda_{ij}^{k} \coloneq g^k \cdot \partial_{i} g_j$, such that the symmetry condition, $\Lambda_{ij}^{k} = \Lambda_{ji}^{k}$, holds.

\begin{figure}
	\input{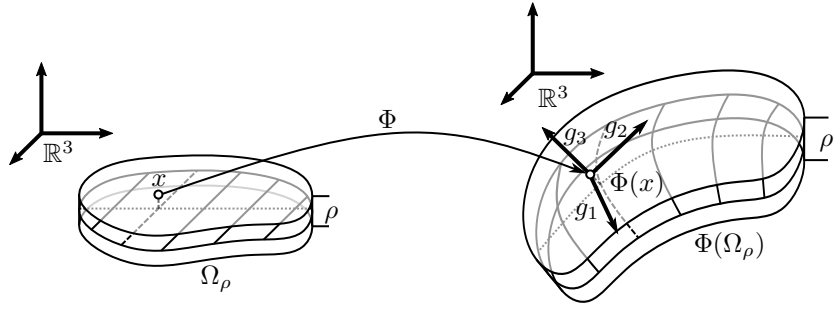}
	\caption{\label{fig:thick_surface}Geometric setting of the thickened surface.}
\end{figure}

\subsection{The Reference Model}\label{sec:mathmodel}
In order to derive the two-dimensional model, we start from the brittle fracture energy from G.A.~Francfort and J.-J.~Marigo\cite{FraMar1998} in the original Cartesian coordinates, given by
\begin{equation}
  \label{cartesianEnergy}
  E(\hat u, \hat K_\rho) \coloneq \half \int_{\Phi(\Om_\rho) \setminus \hat K_\rho} \hat \stiffness \hat \strain(\hat u) : \hat \strain (\hat u) \,\dd \hat x + \kappa \HH^2 (\hat K_\rho) \,,
\end{equation}
for $\hat u \in C^1(\Phi(\Om_\rho)\setminus \hat K_\rho) ; \R^3 )$ describing the displacement field and for $\hat K_\rho \subset \Phi(\Om_\rho)$ a closed and $\HH^2$-rectifiable set describing the fracture. The constant $\kappa >0$ denotes the toughness, which is a material dependent constant. The stiffness tensor~$\hat \stiffness$ is given by
\begin{equation*}
  \hat \stiffness^{ijkl} = \lambda \delta^{ij} \delta^{kl} + \mu (\delta^{ik} \delta^{jl} + \delta^{il} \delta^{jk})
\end{equation*}
with Lamé coefficients $\lambda \geq 0$ and $\mu > 0$. The symbol~$:$ in \eqref{cartesianEnergy} denotes the usual tensor product
\begin{equation*}
  \hat \stiffness \hat \strain(\hat u) : \hat \strain (\hat u) = \hat \stiffness^{ijkl} \hat \strain_{ij}(\hat u)  \hat \strain_{kl}(\hat u) \,.
\end{equation*}
Furthermore, $\hat \strain(\hat u)$ denotes the strain given by the symmetric gradient
\begin{equation*}
  \hat \strain (\hat u) \coloneq \half \bigl( \nabla \hat u + (\nabla \hat u)^\top \bigr) \qquad \hat \strain_{ij} (\hat u) \coloneq \half \bigl( \partial_i \hat u_j + \partial_j \hat u_i \bigr)\,.
\end{equation*}
We remark that the following symmetries hold:
\begin{equation*}
  \hat \stiffness^{ijkl} = \hat \stiffness^{jikl} = \hat \stiffness^{klij} \quad \text{and} \quad  \hat \strain_{ij}(\hat u) = \hat \strain_{ji} (\hat u) \,.
\end{equation*}

Following the strategy of \cite{Cia2000}, we express \eqref{cartesianEnergy}  in terms of curvilinear coordinates. For this purpose, we express the vector field $\hat u$ in terms of the covariant basis, by defining $u_i \colon \Om_\rho \to \R$ such that
\begin{equation}
  \label{vectorFieldTransformation}
  \hat u (\hat x)  = u_i(x) g^i(x) \quad \text{or equivalently} \quad  u_j(x) = \hat u(\hat x) \cdot g_j(x)  \,.
\end{equation}
For $K_\rho \coloneq \Phi^{-1}(\hat K_\rho)$, $u\in  C^1(\Omega_\rho \setminus K_\rho; \R^3)$ and $\hat u\in C^1(\Phi(\Om_\rho \setminus K_\rho); \R^3)$ related by~\eqref{vectorFieldTransformation}, we get
\begin{equation}
  \label{curvedEnergy}
  E(\hat u , \hat K_\rho) = \half \int_{\Om_\rho\setminus K_\rho} \stiffness\strain(u) : \strain(u) \sqrt{g} \,\dd x + \kappa \int_{K_\rho} \sqrt{[\nu_{K_\rho}]_i g^{ij} [\nu_{K_\rho}]_j } \sqrt{g} \,\dd \HH^2 \,,
\end{equation}
where~$[\nu_{K_\rho}]_k$ is the $k$-th component of the unit normal to the surface~$K_\rho$, $\strain(u)$ stands for the strain in the curvilinear setting
\begin{equation*}
  \strain_{ij} (u) \coloneq\half \bigl( \partial_i u_j  + \partial_j u_i ) - u_k \Lambda^k_{ij} \,,
\end{equation*}
and $\stiffness$ is the elasticity tensor in the curvilinear framework
\begin{equation*}
  \stiffness^{ijkl} \coloneq \lambda g^{ij} g^{kl} + \mu (g^{ik} g^{jl} + g^{il} g^{jk}) \,.
\end{equation*}

A simple scaling in the variable~$x_3$ provides an integration domain independent of~$\rho$, namely,
\begin{equation*}
  \pi_\rho \colon
  \left\{
    \begin{aligned}
      \Om &\to \Om_\rho \\
      x &\mapsto (x_1,x_2,\rho x_3)
    \end{aligned}
  \right.
  \quad \text{with } \Om \coloneq \Om_1 =  \omega \times \biggl( -\half, \half \biggr) \,.
\end{equation*}
For any closed set $K_\rho \subset \Om_\rho$, we let $K \coloneq \pi_\rho^{-1} (K_\rho)$. For any scalar, vector, or tensor field~$q$, we add a subscript~$\rho$ to denote the composition with~$\pi_\rho$, i.e., $q_\rho \coloneq q \circ \pi_\rho$.
In particular, for all $u \in C^1(\Om_\rho \setminus K_\rho; \R^3)$ we define $u_\rho \coloneq u\circ \pi_\rho$ and, for $w \in C^1(\Om \setminus K; \R^3)$,
\begin{equation}
  \label{eq:curved-rescaled-strain}
  \begin{split}
    \strain_{\alpha \beta,\rho} (w) &\coloneq \half (\partial_{\alpha} w_\beta + \partial_\beta w_\alpha) - w_k \Lambda_{\alpha \beta, \rho}^k \\
    \strain_{\alpha 3,\rho} (w) &\coloneq \half \biggl( \partial_{\alpha} w_3 + \frac{1}{\rho} \partial_3 w_\alpha \biggr) - w_k \Lambda_{\alpha 3, \rho}^k \\
    \strain_{3 3,\rho} (w) &\coloneq \frac{1}{\rho}  \partial_3 w_3 - w_k \Lambda_{33, \rho}^k \,.
  \end{split}
\end{equation}
One can easily check that $\strain_\rho (u_\rho) = \strain(u) \circ \pi_\rho$, so that the energy functional \eqref{curvedEnergy} can be written as
\begin{align} \label{eq:scaled-curved-energy}
  E(\hat u , \hat K) = \frac{\rho}{2} &  \int_{\Om\setminus K} \stiffness_\rho \strain_\rho (u_{\rho}) : \strain_\rho (u_{\rho}) \sqrt{g_\rho} \,\dd x  \\
  & + \kappa \rho \int_{K} \sqrt{[D^{\rho} \nu_{K}]_i g_\rho^{ij} [D^{\rho} \nu_{K}]_j } \sqrt{g_\rho} \,\dd\HH^2 \,.\nonumber
\end{align}
where $D^{\rho} \coloneq \diag( 1, 1, 1/\rho)$.

Hereafter, we restrict the model to the case of displacements that are normal to the middle surface, i.e., of the form $u = (0, 0, u_3)$, so that \eqref{vectorFieldTransformation} is equivalent to $\hat u = u_3 g^3 = u_3 a^3$. Hence, the whole problem can be expressed in terms of a scalar function $u$ and, with a slight abuse of notation, we set $\strain (u) \coloneq \strain (0,0,u)$ for all $u \in C^1(\Om_\rho \setminus K_\rho)$.

Since $\Lambda_{i3}^3 = a^3 \partial_i a_3 = 0$, by \eqref{eq:curved-rescaled-strain} we obtain, for all $u_\rho \in C^1(\Om \setminus K)$,
\begin{equation}
  \label{scaledStrain}
  \strain_{\alpha \beta, \rho} (u_\rho) = - \Lambda^3_{\alpha \beta, \rho} u_\rho\,, \qquad \strain_{\alpha 3, \rho} (u_\rho) = \half \partial_{\alpha} u_\rho\,, \qquad \strain_{3 3, \rho} (u_\rho) = \frac1\rho \partial_{3} u_\rho \,.
\end{equation}

Finally, we recall Theorems~3.2-1 and~3.3-1 in \cite{Cia2000}, which state some important convergence results of the geometric quantities in \eqref{eq:scaled-curved-energy}, for $\rho \to 0$.
\begin{proposition}
  \label{scaledFuncProp}
  With the definitions above there holds the following:
  \begin{align}
    g_\rho
    &= a + O(\rho) \,, \notag \\
    \notag
    g_\rho^{\alpha \beta}
    &= a^{\alpha \beta} 
      + O(\rho), \quad  g_\rho^{\alpha 3} = 0, \quad  g_\rho^{33} = 1,\\
    \label{scaledChristoffel}
    \Lambda_{\alpha \beta,\rho}^3
     &= b_{\alpha \beta} + O(\rho) \,,
  \end{align}
 where we recall that $g \coloneq \det(g_{ij})$ and $a \coloneq \det(a_{ij})$.
  The convergence rates, as $\rho \to 0$, are uniform, i.e., they do not depend on $x \in \Om$.
  Furthermore, there exist $c,C>0$ such that, for every $\rho > 0$ sufficiently small,
  \begin{equation}
    \label{eq:metric-tensor-bound}
    c \abs{\zeta}^2 \leq g_{ij,\rho} \zeta^i \zeta^{j} \leq C \abs{\zeta}^2 \quad \text{and} \quad c \abs{\zeta}^2 \leq g^{ij}_\rho \zeta_i \zeta_{j} \leq C \abs{\zeta}^2 \quad  \tforall \zeta \in \R^3 \,.
  \end{equation}
\end{proposition}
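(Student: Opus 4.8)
The plan is to trace, term by term, how the rescaled geometric quantities $g_\rho$, $g^{ij}_\rho$ and $\Lambda^k_{ij,\rho}$ are assembled from the chart $\phi$, and then to let $\rho\to 0$; after the change of notation fixed above, the asserted identities and convergence rates are exactly the content of Theorems~3.2-1 and~3.3-1 in \cite{Cia2000}, so the bulk of the argument is bookkeeping, the only genuinely new ingredient being the uniform ellipticity \eqref{eq:metric-tensor-bound}.

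First I would write out the rescaled covariant basis. From \eqref{Phiona} one has $g_\alpha = a_\alpha + x_3\partial_\alpha a_3$ and $g_3 = a_3$ on $\Om_\rho$, hence after composition with $\pi_\rho$,
\begin{equation*}
  g_{\alpha,\rho} = a_\alpha + \rho x_3\,\partial_\alpha a_3 , \qquad g_{3,\rho} = a_3 .
\end{equation*}
Taking scalar products and using $\abs{a_3}=1$ (so that $\partial_\alpha a_3\cdot a_3 = 0$) together with $a_\alpha\cdot\partial_\beta a_3 = -b_{\alpha\beta}$ (obtained by differentiating $a_\alpha\cdot a_3 = 0$ and recalling \eqref{eq:curvature}), I get $g_{\alpha3,\rho}=0$, $g_{33,\rho}=1$ and $g_{\alpha\beta,\rho} = a_{\alpha\beta} - 2\rho x_3\,b_{\alpha\beta} + O(\rho^2)$, all remainders being controlled by the $C^1$-norm of $\{a_\alpha\}$ on the compact set $\bar\om$, which makes every $O(\rho)$ uniform in $x$. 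The determinant claim is then immediate: since $(g_{ij,\rho})$ is block-diagonal with lower-right entry $1$ we have $g_\rho = \det(g_{\alpha\beta,\rho})$, and likewise $a=\det(a_{\alpha\beta})$, so $g_\rho = a + O(\rho)$ by continuity of the determinant. Block-diagonality also yields $g^{\alpha3}_\rho = 0$, $g^{33}_\rho = 1$ and $(g^{\alpha\beta}_\rho) = (g_{\alpha\beta,\rho})^{-1}$; because \eqref{eq:metric-norm-equivalence} forces the matrices $(a_{\alpha\beta})$ to be invertible uniformly in $x$ and $M\mapsto M^{-1}$ is locally Lipschitz, $g^{\alpha\beta}_\rho = a^{\alpha\beta}+O(\rho)$ uniformly for $\rho$ small. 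For the Christoffel symbol I would expand
\begin{equation*}
  \Lambda^3_{\alpha\beta,\rho} = g^3\cdot\partial_\alpha g_\beta = a_3\cdot\partial_\alpha a_\beta + \rho x_3\,a_3\cdot\partial_\alpha\partial_\beta a_3 = b_{\alpha\beta} + O(\rho),
\end{equation*}
reading off $b_{\alpha\beta}$ from \eqref{eq:curvature}. Finally, for \eqref{eq:metric-tensor-bound} I would use the block structure to write, with $\zeta' \coloneq (\zeta^1,\zeta^2)$, $g_{ij,\rho}\zeta^i\zeta^j = a_{\alpha\beta}\zeta^\alpha\zeta^\beta + O(\rho)\abs{\zeta'}^2 + (\zeta^3)^2$, pinch the first two summands between multiples of $\abs{\zeta'}^2$ via \eqref{eq:metric-norm-equivalence} for $\rho$ small, and conclude; the bound on $g^{ij}_\rho$ is then automatic, since $(g^{ij}_\rho)$ is the inverse matrix and hence has reciprocal eigenvalues.

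The one delicate point I anticipate is a matter of regularity: the remainder $\rho x_3\,a_3\cdot\partial_\alpha\partial_\beta a_3$ in the Christoffel expansion formally involves third derivatives of $\phi$, while the standing hypothesis is only $\phi\in C^2(\bar\om;\R^3)$. This is precisely where one relies on Ciarlet's theorems, which are established with the mild extra smoothness needed to make such remainders meaningful; alternatively the remainder can be recast so that only $\norm{\partial_\alpha a_3}_{C^0}$ enters, or one simply strengthens the assumption to $\phi\in C^3$ on the relevant compact sets. Beyond this, every estimate is elementary and uniform over $\bar\om$ by compactness, so no further obstacle is expected.
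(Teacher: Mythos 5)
Your proof is correct, but it follows a genuinely different route from the paper: the paper does not reprove these expansions at all, it simply recalls Theorems~3.2-1 and~3.3-1 of \cite{Cia2000}, where the identities $g^{\alpha 3}_\rho=0$, $g^{33}_\rho=1$ and the uniform $O(\rho)$ expansions of $g_\rho$, $g^{\alpha\beta}_\rho$, $\Lambda^3_{\alpha\beta,\rho}$ (together with the uniform ellipticity bounds) are established, whereas you carry out the underlying computation directly from $g_{\alpha,\rho}=a_\alpha+\rho x_3\,\partial_\alpha a_3$, $g_{3,\rho}=a_3$. What your version buys is self-containedness and transparency of the uniformity in $x$: every remainder is controlled by $C^0$-norms of $a_\alpha$ and $\partial_\alpha a_3$ on the compact set $\bar\om$ together with \eqref{eq:metric-norm-equivalence}, and the ellipticity bounds and the statement $g^{\alpha\beta}_\rho=a^{\alpha\beta}+O(\rho)$ follow from block-diagonality, continuity of the determinant, and local Lipschitz continuity of matrix inversion, exactly as you argue; what the citation buys is brevity. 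Concerning the one delicate point you flag: the third-derivative issue is resolved by your own suggested recasting, which in fact works under the paper's standing assumption $\phi\in C^2(\bar\om;\R^3)$ with no appeal to Ciarlet's stronger smoothness hypotheses. Since $g^3=a_3$ is independent of $x_3$ and $g^3\cdot g_\beta=0$, differentiating this orthogonality relation gives
\begin{equation*}
  \Lambda^3_{\alpha\beta,\rho}
  = g^3\cdot\partial_\alpha g_{\beta,\rho}
  = -\,\partial_\alpha a_3\cdot g_{\beta,\rho}
  = -\,\partial_\alpha a_3\cdot\bigl(a_\beta+\rho x_3\,\partial_\beta a_3\bigr)
  = b_{\alpha\beta}-\rho x_3\,\partial_\alpha a_3\cdot\partial_\beta a_3 \,,
\end{equation*}
so only second derivatives of $\phi$ enter and the $O(\rho)$ remainder is uniform on $\bar\om$. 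With this substitution in place of the expansion involving $\partial_\alpha\partial_\beta a_3$, your argument is complete and correct.
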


\begin{proposition}\label{elasticityFormulas}
  The following relations hold:
  \begin{align*}
    \stiffness_\rho^{\alpha \beta \sigma \tau} &= \lambda a^{\alpha \beta} a^{\sigma \tau} + \mu (a^{\alpha \sigma} a^{\beta \tau} + a^{\alpha \tau} a^{\beta \sigma}) + O(\rho) \,, &\stiffness_\rho^{\alpha \beta \sigma 3} &=0 \,, \\
    \stiffness_\rho^{\alpha 3 \beta 3} &= \mu a^{\alpha \beta} +  O(\rho)\,, & \stiffness_\rho^{\alpha \beta 33} &= \lambda a^{\alpha\beta} + O(\rho) \,, \\
    \stiffness_\rho^{\alpha 333} &= 0 \,, & \stiffness_\rho^{3333} &= \lambda + 2\mu \,.
  \end{align*}
  The convergence rates as $\rho \to 0$ are uniform, i.e., they do not depend on $x \in \Om$.  Furthermore, there exist some constants $c,C>0$ such that, for $\rho>0$ sufficiently small,
  \begin{equation}
    \label{eq:positive-definite-stiffness}
    c \abs{\mathrm{M}}^2 \leq \stiffness^{ijkl}_\rho \mathrm{M}_{ij} \mathrm{M}_{kl} \leq C \abs{\mathrm{M}}^2 \quad \tforall  \mathrm{M} \in \R^{3\times 3 } \text{ symmetric} \,,
  \end{equation}
  where $\abs{\cdot}$ stands for the Frobenius norm.
\end{proposition}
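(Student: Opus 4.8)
The proof is essentially a substitution, so the plan is short. First I would rewrite the rescaled tensor: from the definition $\stiffness^{ijkl} = \lambda g^{ij}g^{kl} + \mu(g^{ik}g^{jl} + g^{il}g^{jk})$ and the convention $q_\rho = q\circ\pi_\rho$ one gets
\begin{equation*}
  \stiffness_\rho^{ijkl} = \lambda g_\rho^{ij} g_\rho^{kl} + \mu\bigl( g_\rho^{ik} g_\rho^{jl} + g_\rho^{il} g_\rho^{jk}\bigr)\,,
\end{equation*}
which inherits the symmetries $\stiffness_\rho^{ijkl}=\stiffness_\rho^{jikl}=\stiffness_\rho^{klij}$ from $(g_\rho^{ij})$, so that only the representative index patterns appearing in the statement need to be checked. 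Then I would insert $g_\rho^{\alpha\beta} = a^{\alpha\beta} + O(\rho)$, $g_\rho^{\alpha 3} = 0$, $g_\rho^{33} = 1$ from Proposition~\ref{scaledFuncProp}. Since the $a^{\alpha\beta}$ are bounded on $\bar\om$ and the remainders in Proposition~\ref{scaledFuncProp} are uniform in $x$, every product collapses to $a^{\cdot\cdot}a^{\cdot\cdot} + O(\rho)$ with $x$-uniform remainder; and since a factor $g_\rho^{\gamma 3}$ is identically zero, any product that pairs a Greek index with the index $3$ disappears exactly.

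Running through the cases is then immediate. The all-Greek pattern gives $\stiffness_\rho^{\alpha\beta\sigma\tau}=\lambda a^{\alpha\beta}a^{\sigma\tau} + \mu(a^{\alpha\sigma}a^{\beta\tau} + a^{\alpha\tau}a^{\beta\sigma}) + O(\rho)$; every pattern in which the number of indices equal to $3$ is odd (e.g.\ $\stiffness_\rho^{\alpha\beta\sigma 3}$ and $\stiffness_\rho^{\alpha 333}$) vanishes \emph{identically}, since no pairing can leave both index pairs of the same type; $\stiffness_\rho^{\alpha 3\beta 3}$ reduces to the single surviving term $\mu g_\rho^{\alpha\beta}g_\rho^{33} = \mu a^{\alpha\beta} + O(\rho)$, and $\stiffness_\rho^{\alpha\beta 33}$ to $\lambda g_\rho^{\alpha\beta}g_\rho^{33} = \lambda a^{\alpha\beta} + O(\rho)$; and $\stiffness_\rho^{3333} = \lambda(g_\rho^{33})^2 + 2\mu(g_\rho^{33})^2 = \lambda + 2\mu$ exactly. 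All convergence rates are uniform in $x$ because those in Proposition~\ref{scaledFuncProp} are.

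For the ellipticity bound \eqref{eq:positive-definite-stiffness} I would set $\mathrm{G}_\rho \coloneq (g_\rho^{ij})$, which by \eqref{eq:metric-tensor-bound} is symmetric positive definite with eigenvalues in $[c,C]$ uniformly in $x$ for $\rho$ small, and write $\mathrm{G}_\rho^{1/2}$ for its symmetric positive square root. For symmetric $\mathrm{M}\in\R^{3\times3}$, using $\mathrm{M}_{ij}=\mathrm{M}_{ji}$ to merge the two $\mu$-terms,
\begin{equation*}
  \stiffness_\rho^{ijkl}\mathrm{M}_{ij}\mathrm{M}_{kl} = \lambda\bigl(\mathrm{G}_\rho:\mathrm{M}\bigr)^2 + 2\mu\,\trace\bigl(\mathrm{G}_\rho\mathrm{M}\mathrm{G}_\rho\mathrm{M}\bigr) = \lambda\bigl(\mathrm{G}_\rho:\mathrm{M}\bigr)^2 + 2\mu\,\bigabs{\mathrm{G}_\rho^{1/2}\mathrm{M}\mathrm{G}_\rho^{1/2}}^2\,.
\end{equation*}
Since $\bigabs{\mathrm{G}_\rho^{1/2}X}^2 = \trace(X^\top\mathrm{G}_\rho X)\in[c\abs{X}^2,C\abs{X}^2]$ for every $X$, applying this once with $X=\mathrm{M}\mathrm{G}_\rho^{1/2}$ and once, after transposing, with $X=\mathrm{M}^\top$ yields $c^2\abs{\mathrm{M}}^2\le\bigabs{\mathrm{G}_\rho^{1/2}\mathrm{M}\mathrm{G}_\rho^{1/2}}^2\le C^2\abs{\mathrm{M}}^2$; together with $\lambda\ge 0$ (so that $0\le\lambda(\mathrm{G}_\rho:\mathrm{M})^2\le\lambda\abs{\mathrm{G}_\rho}^2\abs{\mathrm{M}}^2\le 3\lambda C^2\abs{\mathrm{M}}^2$) and $\mu>0$, this gives \eqref{eq:positive-definite-stiffness}, for instance with constants $2\mu c^2$ and $(3\lambda+2\mu)C^2$.

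I do not expect a genuine obstacle. The asymptotic formulas are pure substitution; the only care needed is to keep straight which identities hold exactly (those whose discarded terms all carry a vanishing factor $g_\rho^{\gamma 3}$, or which involve only powers of $g_\rho^{33}=1$) versus merely up to $O(\rho)$. The single mildly technical point is the ellipticity estimate: the symmetry of $\mathrm{M}$ is what allows one to put the $\mu$-part in the manifestly nonnegative congruence form $\bigabs{\mathrm{G}_\rho^{1/2}\mathrm{M}\mathrm{G}_\rho^{1/2}}^2$, and the uniform conditioning of $\mathrm{G}_\rho$ is what allows one to sandwich it between fixed multiples of $\abs{\mathrm{M}}^2$; both are standard.
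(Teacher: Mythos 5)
Your proof is correct, but note that the paper itself does not prove Proposition~\ref{elasticityFormulas}: together with Proposition~\ref{scaledFuncProp} it is recalled from Theorems~3.2-1 and~3.3-1 of \cite{Cia2000}, so there is no in-paper argument to match step by step. Your derivation supplies the missing details in the natural way, namely by substituting $g_\rho^{\alpha\beta}=a^{\alpha\beta}+O(\rho)$, $g_\rho^{\alpha 3}=0$, $g_\rho^{33}=1$ into $\stiffness_\rho^{ijkl}=\lambda g_\rho^{ij}g_\rho^{kl}+\mu(g_\rho^{ik}g_\rho^{jl}+g_\rho^{il}g_\rho^{jk})$. The case analysis is right: any index pattern with an odd number of indices equal to $3$ forces a factor $g_\rho^{\gamma 3}=0$ in every product and hence vanishes identically, $\stiffness_\rho^{\alpha 3\beta 3}$ and $\stiffness_\rho^{\alpha\beta 33}$ reduce to the single surviving terms you indicate, $\stiffness_\rho^{3333}$ involves only $g_\rho^{33}=1$ and is exact, and uniformity in $x$ is inherited from Proposition~\ref{scaledFuncProp} together with the boundedness of $a^{\alpha\beta}$ on $\bar\om$. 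The ellipticity argument is also sound: for symmetric $\mathrm{M}$ the identity $\stiffness_\rho^{ijkl}\mathrm{M}_{ij}\mathrm{M}_{kl}=\lambda(\mathrm{G}_\rho:\mathrm{M})^2+2\mu\bigabs{\mathrm{G}_\rho^{1/2}\mathrm{M}\mathrm{G}_\rho^{1/2}}^2$ with $\mathrm{G}_\rho=(g_\rho^{ij})$ holds, the two-sided congruence bound $c^2\abs{\mathrm{M}}^2\le\bigabs{\mathrm{G}_\rho^{1/2}\mathrm{M}\mathrm{G}_\rho^{1/2}}^2\le C^2\abs{\mathrm{M}}^2$ follows from \eqref{eq:metric-tensor-bound}, and the standing assumptions $\lambda\ge 0$, $\mu>0$ give \eqref{eq:positive-definite-stiffness} with the constants $2\mu c^2$ and $(3\lambda+2\mu)C^2$ you state. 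The only bookkeeping point is that your argument takes Proposition~\ref{scaledFuncProp} (itself quoted from \cite{Cia2000}) as given, which is consistent with how the paper uses it; what your route buys is a short self-contained verification in place of the citation, at no loss of generality.
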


As a consequence of Proposition~\ref{scaledFuncProp}, we can rewrite \eqref{eq:scaled-curved-energy} as
\begin{align} \label{eq:scaled-curved-energy-2}
  E(\hat u , \hat K) = \displaystyle \frac{\rho}{2} & \int_{\Om\setminus K} \stiffness_\rho \strain_\rho (u_{\rho}) : \strain_\rho (u_{\rho}) \sqrt{g_\rho} \,\dd x  \\
 & + \displaystyle\kappa \rho \int_{K} \sqrt{[\nu_{K}]_\alpha g_\rho^{\alpha \beta} [\nu_{K}]_\beta  + \frac{1}{\rho^2} [\nu_{K}]_3^2 } \sqrt{g_\rho} \,\dd\HH^2 \,. \nonumber
\end{align}


\subsection{Dimension Reduction}
\label{sec:shellmodel-model}
With a view to the limit for $\rho \to 0$, we rescale the energy in \eqref{eq:scaled-curved-energy-2} by $\rho^{-1}$ and observe that, as long as $\rho > 0$, such a scaling does not change the ``three-dimensional'' minimizer of the functional.

It is a standard, in the theory of free discontinuity problems, to relax the functional \eqref{eq:scaled-curved-energy-2} from $C^1(\Om \setminus K)$ to the space $\GSBV(\Om)$ and to replace the set $K$ with the discontinuity set~$S_{u}$.
Hence, for all $u \in \GSBV(\Om)$ and for all $\rho>0$, we define the functional
\begin{equation*}
  G_\rho (u) \coloneq \half \int_{\Om} \stiffness_\rho \strain_\rho (u) \colon  \strain_\rho (u) \sqrt{g_\rho } \,\dd x
  + \kappa \int_{S_{u}} \sqrt{[\nu_{u}]_\alpha g_\rho^{\alpha \beta} [\nu_{u}]_\beta  + \frac{1}{\rho^2} [\nu_{u}]_3^2 } \sqrt{g_\rho} \,\dd \HH^2.
\end{equation*}
The current goal is the computation of the $\Gamma$-limit of the sequence of functionals $G_\rho$ as $\rho \to 0$. For this purpose, we introduce the function space:
\begin{align*}
  \U &\coloneq \bigl\{ u\in \GSBV^2 (\Om) \colon \partial_3 u = 0, [\nu_u]_3 = 0 \bigr\}.
\end{align*}
\begin{remark}
  \label{r-nox3}
  Conditions $\partial_3 u = 0$ and $[\nu_u]_3 = 0$ imply that $u\in \U$ is independent of $x_3$. This can be easily checked for~$u\in \U\cap \SBV^2(\Om)$, since the third component of the distributional derivative~$\D u$ is zero, so that~$u$ is constant with respect to~$x_3$. By a truncation argument, this can be extended to every~$u\in \U$. Therefore, we can identify~$\U$ with~$\GSBV(\om)$.

\end{remark}

As stated in Theorem~\ref{thm:dim-reduction} below, the $\Gamma$-limit of~$G_\rho$ turns out to be
\begin{align*}
  G_0(u) \coloneq\displaystyle \half &  \int_{\Om} \stiff^{\alpha \beta \sigma \tau} b_{\alpha \beta} b_{\sigma \tau}
  \abs{u}^2 \sqrt{a} \,\dd x
  \\
  &
  + \frac{\mu}{2} \int_{\Om} a^{\alpha \beta} \partial_{\alpha} u \partial_{\beta} u \sqrt{a} \,\dd x  + \kappa \int_{S_u} \sqrt{[\nu_u]_\alpha a^{\alpha \beta} [\nu_u]_\beta} \, \sqrt{a} \,\dd \HH^2
\end{align*}
where
\begin{equation*}
  \stiff^{\alpha \beta \sigma \tau} \coloneq \frac{2 \lambda \mu}{\lambda + 2\mu} a^{\alpha \beta} a^{\sigma \tau} + \mu \bigl(a^{\alpha \sigma} a^{\beta \tau} + a^{\alpha \tau} a^{\beta \sigma} \bigr).
\end{equation*}

\begin{remark}
  Analogously to \eqref{eq:positive-definite-stiffness}, there exist two constants $c,C>0$, such that
  \begin{equation*}
    c \abs{\mathrm{M}}^{2} \leq c^{\alpha \beta \sigma \tau} \mathrm{M}_{\alpha \beta} \mathrm{M}_{\sigma \tau} \leq C \abs{\mathrm{M}}^{2} \quad\tforall \mathrm{M}\in \R^{2\times 2} \text{ symmetric.}
  \end{equation*}
  This implies that, when $G_0 (u) < \infty$, we have $b_{\alpha \beta} u \in L^2(\Om)$.
\end{remark}

We are now ready to state the result describing the two dimensional model in terms of a $\Gamma$-convergence argument as the thickness $\rho$ of $\Om_\rho$ tends to zero.
\begin{theorem}
  \label{thm:dim-reduction}
  Let $\mathcal \G_\rho\colon L^1(\Om) \to \R$ be defined by
  \begin{equation*}
    \G_\rho (u) = \left\{
      \begin{aligned}
        &G_\rho (u) && \text{for } u\in \GSBV^2 (\Om) \\
        &{+}\infty && \text{otherwise},
      \end{aligned} \right.
  \end{equation*}
  and $\G_0 \colon L^1(\Om) \to \R$ by
  \begin{equation*}
    \G_0 (u) = \left\{
      \begin{aligned}
        & G_0(u) && \text{for } u\in \U  \\
        &{+} \infty && \text{otherwise}.
      \end{aligned} \right.
  \end{equation*}
  Then, $\mathcal G_\rho$ $\Gamma$\nobreakdash-converges to $\mathcal G_0$ w.r.t. the $L^{1}$-topology
  as $\rho \to 0$.
\end{theorem}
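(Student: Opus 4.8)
The plan is to establish the two $\Gamma$-convergence inequalities separately, using Propositions~\ref{scaledFuncProp} and~\ref{elasticityFormulas} to replace throughout each $\rho$-dependent geometric coefficient by its limit, the $O(\rho)$ remainders being absorbed whenever the energy stays bounded. I write $\stiffness_0$ for the limit elasticity tensor (the leading term in Proposition~\ref{elasticityFormulas}).

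\emph{Lower bound.} Let $u_\rho \to u$ in $L^1(\Om)$. I would assume $\ell \coloneq \liminf_{\rho\to0}\G_\rho(u_\rho)<\infty$ and pass to a subsequence realizing $\ell$ with $\sup_\rho\G_\rho(u_\rho)<\infty$. By \eqref{eq:positive-definite-stiffness} and \eqref{scaledStrain}, boundedness of the bulk term bounds $\nabla u_\rho$ and $\tfrac1\rho\partial_3 u_\rho$ in $L^2(\Om)$, and boundedness of the crack term bounds $\HH^2(S_{u_\rho})$ and, since the integrand dominates $\tfrac1\rho\abs{[\nu_{u_\rho}]_3}$, also forces $\int_{S_{u_\rho}}\abs{[\nu_{u_\rho}]_3}\,\dd\HH^2\le C\rho$. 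Applying the $\GSBV$ compactness and closure theorem to the truncations $u_\rho^k$ and then letting $k\to\infty$ gives $u\in\GSBV^2(\Om)$ with $\nabla u_\rho\wto\nabla u$ in $L^2$; boundedness of $\tfrac1\rho\partial_3 u_\rho$ forces $\partial_3 u=0$, and $\int_{S_{u_\rho^k}}\abs{[u_\rho^k]}\,\abs{[\nu_{u_\rho^k}]_3}\,\dd\HH^2\le 2kC\rho\to0$ shows that the third component of $\D u^k$ has no jump part, so $[\nu_u]_3=0$; hence $u\in\U$. Passing to a further subsequence along which $\tfrac1\rho\partial_3 u_\rho\wto\xi$ in $L^2$, one has $\strain_\rho(u_\rho)\wto\bar\strain$ in $L^2$, where $\bar\strain_{\alpha\beta}=-b_{\alpha\beta}u$, $\bar\strain_{\alpha3}=\tfrac12\partial_\alpha u$ and $\bar\strain_{33}=\xi$. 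Since $\stiffness_\rho\sqrt{g_\rho}\to\stiffness_0\sqrt a$ uniformly and the density is nonnegative and convex, weak lower semicontinuity (up to a vanishing error from the coefficients) yields
\[
 \liminf_{\rho\to0}\half\int_\Om\stiffness_\rho\strain_\rho(u_\rho) : \strain_\rho(u_\rho)\sqrt{g_\rho}\,\dd x \ \ge\ \half\int_\Om\stiffness_0\,\bar\strain : \bar\strain\,\sqrt a\,\dd x \ \ge\ \half\int_\Om\Bigl(\min_{t\in\R}\stiffness_0\,\bar\strain^{(t)} : \bar\strain^{(t)}\Bigr)\sqrt a\,\dd x,
\]
where $\bar\strain^{(t)}$ is $\bar\strain$ with its $33$-entry replaced by $t$; completing the square in $t$ identifies the last integrand as $\stiff^{\alpha\beta\sigma\tau}b_{\alpha\beta}b_{\sigma\tau}\abs{u}^2+\mu\,a^{\alpha\beta}\partial_\alpha u\,\partial_\beta u$. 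For the crack term I would bound the integrand below by $(1-C\rho)\sqrt{[\nu_{u_\rho}]_\alpha a^{\alpha\beta}[\nu_{u_\rho}]_\beta+[\nu_{u_\rho}]_3^2}\,\sqrt a$, whose integrand is a genuine norm in $[\nu_{u_\rho}]$, and invoke lower semicontinuity of anisotropic surface energies on $\GSBV$ (again via truncation) together with $[\nu_u]_3=0$; this recovers the crack term of $G_0$. Adding the two bounds gives $\ell\ge\G_0(u)$.

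\emph{Upper bound.} Let $u\in\U$ with $\G_0(u)<\infty$, identified with a function on $\om$ via Remark~\ref{r-nox3}. By truncation and a diagonal argument (using the $L^1$-lower semicontinuity of the $\Gamma$-$\limsup$ and the convergence of $G_0$ on truncations, which follows from dominated convergence in all three terms) it suffices to treat $u\in\GSBV^2(\om)\cap L^\infty(\om)$. For such $u$ I would use the corrected sequence
\[
 u_\rho(x_1,x_2,x_3)\coloneq u(x_1,x_2)+\rho\,x_3\,d(x_1,x_2),\qquad d\coloneq\tfrac{\lambda}{\lambda+2\mu}\,a^{\alpha\beta}b_{\alpha\beta}\,u,
\]
where the only continuous coefficient $a^{\alpha\beta}b_{\alpha\beta}$ is first replaced by a smooth approximation (followed by a further diagonal argument in the smoothing parameter) so that indeed $u_\rho\in\GSBV^2(\Om)\cap L^\infty(\Om)$. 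Then $u_\rho\to u$ in $L^1(\Om)$, $S_{u_\rho}$ is the vertical cylinder over $S_u$ (hence $[\nu_{u_\rho}]_3=0$), and $\strain_{33,\rho}(u_\rho)=\tfrac1\rho\partial_3 u_\rho=d$ equals the minimizer from the lower bound. Consequently $\strain_\rho(u_\rho)\to\bar\strain$ in $L^2$ with $\xi=d$, and since $\stiffness_\rho\sqrt{g_\rho}\to\stiffness_0\sqrt a$, $g_\rho^{\alpha\beta}\to a^{\alpha\beta}$ and $\sqrt{g_\rho}\to\sqrt a$ uniformly, dominated convergence yields $\G_\rho(u_\rho)\to\G_0(u)$, i.e.\ the required recovery sequence.

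\emph{Main obstacle.} The hard part will be the bulk lower bound: one must extract a weak $L^2$ limit of the transverse strain $\tfrac1\rho\partial_3 u_\rho$ — a quantity with no a priori amplitude bound — and identify the remaining entries of $\lim\strain_\rho(u_\rho)$, which forces working through $\SBV$-truncations and the $\GSBV^2$ compactness/closure theorem rather than arguing directly in $\GSBV$; then one has to combine weak lower semicontinuity with the merely uniformly (not monotonically) convergent coefficients $\stiffness_\rho\sqrt{g_\rho}$, and finally perform the pointwise minimization over the transverse strain to produce the reduced tensor $\stiff$. A related delicate point is extracting the defining conditions $\partial_3 u=0$ and $[\nu_u]_3=0$ of $\U$ from the quantitative bounds $\tfrac1\rho\norm{\partial_3 u_\rho}_{L^2}\le C$ and $\int_{S_{u_\rho}}\abs{[\nu_{u_\rho}]_3}\,\dd\HH^2\le C\rho$, which once more works cleanly only after truncation.
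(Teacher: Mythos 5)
Your proposal is correct, and while it shares the overall architecture of the paper's proof (compactness, separate lower bounds for bulk and surface terms, an explicit recovery sequence with a transverse correction), both key technical steps are executed by a genuinely different route. For the bulk lower bound the paper completes the square algebraically \emph{before} passing to the limit: it decomposes $\G_{\rho_\ell}(u_\ell)$ into $I^{(1)}+\dots+I^{(4)}$, discards the non-negative completed square $I^{(2)}$ containing $\strain_{33,\rho_\ell}$, and then applies Fatou to $I^{(1)}$ and weak lower semicontinuity to $I^{(3)}$. You instead extract a weak $L^2$ limit $\xi$ of the transverse strain $\tfrac{1}{\rho}\partial_3 u_\rho$, pass to the limit in the full convex quadratic form, and only then minimize pointwise over the $33$-entry; the two arguments are dual, and yours makes the origin of the reduced tensor $\stiff$ (relaxation over the transverse strain) and the design of the recovery sequence (realize the pointwise minimizer) transparent, at the price of carrying the unidentified limit $\xi$, which the paper's route never needs. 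Correspondingly, your recovery sequence $u+\rho x_3 d$ is the linearization of the paper's exponential ansatz $u\exp\bigl(\tfrac{\lambda}{\lambda+2\mu}a^{\alpha\beta}b_{\alpha\beta}\rho x_3\bigr)$; both force $\strain_{33,\rho}$ to the optimal transverse strain, and your explicit mollification of $a^{\alpha\beta}b_{\alpha\beta}$ addresses a regularity point (a tangential derivative of this coefficient enters $\strain_{\alpha 3,\rho}$ of the recovery sequence) that the paper glosses over. Finally, you identify $u\in\U$ by showing that the entire third distributional derivative of the truncations vanishes, whereas the paper gets $\partial_3 u=0$ from lower semicontinuity of $\|\partial_3\cdot\|_{L^2}$ and $[\nu_u]_3=0$ from the $\tilde\rho$-parametrized family of anisotropic surface energies; both are standard closure arguments and both are correct.
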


\begin{proof}
	The proof follows directly from Proposition~\ref{liminfIneq} and  Proposition~\ref{prop:dim-red-limsup} below.
\end{proof}

In order to prove Proposition 2.3, we further need the next auxiliary lemma.

\begin{lemma}\label{Lemma1}
  Let $\lbrace\rho_\ell\rbrace$, with $\rho_\ell > 0$, be a null sequence.
  Let $u_\ell$ be such that $u_\ell \to u$ in $L^1(\Omega)$ as $\ell \to \infty$ and
  \begin{equation}\label{eq:bound-G}
    \sup_{\ell \in \N} \G_{\rho_\ell} (u_\ell) < \infty.
  \end{equation}
  Then, $u\in \U$ and, up to a subsequence, $\strain_{\alpha \beta, \rho_\ell}(u_\ell) \wto -b_{\alpha \beta} u$  and $\partial_\alpha u_\ell \wto \partial_\alpha u$ weakly in $L^2 (\Omega)$. Furthermore,
  \begin{equation*}
    \lim_{\ell\to\infty} \int_{S_{u_\ell}} | [\nu_{u_\ell}]_3 | \,\dd \HH^{2} =0.
  \end{equation*}
\end{lemma}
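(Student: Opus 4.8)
The plan is to exploit the coercivity hidden in the bound \eqref{eq:bound-G} to extract weak limits, and then identify them using the convergence of the geometric quantities from Proposition~\ref{scaledFuncProp}. First I would use \eqref{eq:positive-definite-stiffness} and \eqref{eq:metric-tensor-bound} to bound, uniformly in $\ell$, the $L^2(\Om)$-norms of all the rescaled strain components $\strain_{\alpha\beta,\rho_\ell}(u_\ell)$, $\strain_{\alpha 3,\rho_\ell}(u_\ell) = \tfrac12\partial_\alpha u_\ell$ (recall \eqref{scaledStrain}, since we work with scalar normal displacements), and $\strain_{33,\rho_\ell}(u_\ell) = \tfrac1{\rho_\ell}\partial_3 u_\ell$; simultaneously, the surface term controls $\HH^2(S_{u_\ell})$ and, crucially, $\int_{S_{u_\ell}} \tfrac1{\rho_\ell^2}[\nu_{u_\ell}]_3^2\sqrt{g_{\rho_\ell}}\,\dd\HH^2$. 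Combined with $u_\ell \to u$ in $L^1$, a $\GSBV$ compactness/closure theorem (as in \cite{AmbFusPal2000,DalFraToa2005}) gives $u\in\GSBV^2(\Om)$ together with $\nabla u_\ell \wto \nabla u$ weakly in $L^2$; in particular $\partial_\alpha u_\ell \wto \partial_\alpha u$. From $\int_{S_{u_\ell}}\tfrac1{\rho_\ell^2}[\nu_{u_\ell}]_3^2\,\dd\HH^2$ bounded and $\rho_\ell\to 0$ one immediately gets $\int_{S_{u_\ell}}[\nu_{u_\ell}]_3^2\,\dd\HH^2\to 0$, and then by Cauchy--Schwarz (using $\HH^2(S_{u_\ell})$ bounded) also $\int_{S_{u_\ell}}|[\nu_{u_\ell}]_3|\,\dd\HH^2\to 0$, which is the last assertion.

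Next I would establish $\partial_3 u = 0$ and $[\nu_u]_3 = 0$, i.e.\ $u\in\U$. For the bulk part: $\partial_3 u_\ell = \rho_\ell \cdot \strain_{33,\rho_\ell}(u_\ell)$ with the second factor bounded in $L^2$, hence $\partial_3 u_\ell \to 0$ strongly in $L^2$; on the other hand $\partial_3 u_\ell \wto \partial_3 u$ weakly in $L^2$ by the same $\GSBV^2$ closure (the approximate-gradient part of the distributional derivative converges), so $\partial_3 u = 0$. For the jump normal: the distributional derivative $\D u_\ell = \nabla u_\ell\,\LL^3 + [u_\ell]\nu_{u_\ell}\HH^2\llcorner S_{u_\ell}$ converges to $\D u$ as measures; its third component is $\partial_3 u_\ell\,\LL^3 + [u_\ell][\nu_{u_\ell}]_3\HH^2\llcorner S_{u_\ell}$, whose absolutely continuous part tends to $0$ in $L^1$ and whose jump part has total variation $\le \int_{S_{u_\ell}}|[u_\ell]||[\nu_{u_\ell}]_3|\,\dd\HH^2$. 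The cleanest route is a truncation: for the truncated functions $u_\ell^M = (-M)\vee u_\ell \wedge M \in \SBV^2(\Om)$, the jump height is bounded by $2M$ and one controls $\int_{S_{u_\ell^M}}|[\nu_{u_\ell^M}]_3|\,\dd\HH^2 \le \int_{S_{u_\ell}}|[\nu_{u_\ell}]_3|\,\dd\HH^2 \to 0$; passing to the limit in $\D_3 u_\ell^M$ and then letting $M\to\infty$ (using $u_\ell^M\to u^M$, $u^M\to u$) yields that the third component of $\D u$ is absolutely continuous with vanishing density, hence $[\nu_u]_3 = 0$. By Remark~\ref{r-nox3} these two conditions say exactly $u\in\U$.

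Finally I would identify the weak limit of $\strain_{\alpha\beta,\rho_\ell}(u_\ell)$. By \eqref{scaledStrain}, $\strain_{\alpha\beta,\rho_\ell}(u_\ell) = -\Lambda^3_{\alpha\beta,\rho_\ell}\,u_\ell$, and by \eqref{scaledChristoffel} in Proposition~\ref{scaledFuncProp} we have $\Lambda^3_{\alpha\beta,\rho_\ell} = b_{\alpha\beta} + O(\rho_\ell)$ uniformly in $x$. Since $u_\ell\to u$ in $L^1$ and $\strain_{\alpha\beta,\rho_\ell}(u_\ell)$ is bounded in $L^2$, it has a weakly convergent subsequence in $L^2$; testing against $C_c^\infty$ functions and using $u_\ell\to u$ in $L^1$ together with the uniform convergence $\Lambda^3_{\alpha\beta,\rho_\ell}\to b_{\alpha\beta}$ identifies the weak $L^2$-limit as $-b_{\alpha\beta}u$ (uniqueness of the limit then removes the need to pass to a further subsequence for this step). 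I expect the main obstacle to be the rigorous handling of the jump-normal identification $[\nu_u]_3 = 0$ in the full $\GSBV$ setting, where $u$ need not be summable and the distributional derivative is only defined after truncation; the truncation argument sketched above, together with the uniform control of $\int_{S_{u_\ell}}|[\nu_{u_\ell}]_3|\,\dd\HH^2$, is what makes it go through.
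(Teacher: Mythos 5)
Your overall strategy is correct and for the most part coincides with the paper's: coercivity of the bulk term via \eqref{eq:positive-definite-stiffness}, control of $\HH^2(S_{u_\ell})$ from the surface term, $\GSBV^2$ compactness, $\partial_3 u=0$ from $\partial_3 u_\ell=\rho_\ell\,\strain_{33,\rho_\ell}(u_\ell)$, and identification of the weak limit of $\strain_{\alpha\beta,\rho_\ell}(u_\ell)=-\Lambda^3_{\alpha\beta,\rho_\ell}u_\ell$ via the uniform convergence $\Lambda^3_{\alpha\beta,\rho_\ell}\to b_{\alpha\beta}$. Two remarks.

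First, a genuine (though harmless) slip: the surface term does \emph{not} control $\int_{S_{u_\ell}}\tfrac{1}{\rho_\ell^2}[\nu_{u_\ell}]_3^2\,\dd\HH^2$. The integrand is $\sqrt{[\nu]_\alpha g^{\alpha\beta}[\nu]_\beta+\tfrac{1}{\rho_\ell^2}[\nu]_3^2}\,\sqrt{g}$, which has only \emph{linear} growth in $[\nu]_3/\rho_\ell$; what the energy bound actually gives is $\int_{S_{u_\ell}}\tfrac{1}{\rho_\ell}\abs{[\nu_{u_\ell}]_3}\,\dd\HH^2\leq C$. This makes your conclusion easier, not harder: you get $\int_{S_{u_\ell}}\abs{[\nu_{u_\ell}]_3}\,\dd\HH^2\leq C\rho_\ell\to 0$ directly, with no need for the Cauchy--Schwarz step. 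You should correct the intermediate claim, but the final assertion of the lemma survives.

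Second, your argument for $[\nu_u]_3=0$ is a genuinely different route from the paper's. The paper applies the anisotropic lower semicontinuity theorem for surface energies (Theorem~5.22 in \cite{AmbFusPal2000}) with a fixed auxiliary parameter $\tilde\rho>0$, bounds the resulting $\liminf$ by the energy, and then sends $\tilde\rho\to 0$ to force $\int_{S_u}\abs{[\nu_u]_3}\,\dd\HH^2=0$. You instead truncate to $u_\ell^M\in\SBV^2(\Om)$, observe that the total variation of the third component $\D_3 u_\ell^M=\partial_3 u_\ell^M\,\LL^3+[u_\ell^M][\nu_{u_\ell^M}]_3\,\HH^2\llcorner S_{u_\ell^M}$ tends to zero (the jump part being bounded by $2M\int_{S_{u_\ell}}\abs{[\nu_{u_\ell}]_3}\,\dd\HH^2$), pass to the weak-$*$ limit to get $\D_3 u^M=0$, and then let $M\to\infty$ using $S_{u^M}\nearrow S_u$ up to $\HH^2$-null sets. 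This is valid and arguably more elementary (it avoids the heavy lower semicontinuity machinery, relying only on weak-$*$ convergence of measures and standard truncation properties of $\GSBV$), at the price of a slightly more delicate bookkeeping of the truncation. Note that in your version the lemma's final claim $\int_{S_{u_\ell}}\abs{[\nu_{u_\ell}]_3}\,\dd\HH^2\to 0$ becomes an \emph{input} to the proof that $[\nu_u]_3=0$, whereas in the paper it is a by-product of the same energy bound; both orderings are legitimate.
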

\begin{proof}
  Throughout the proof, $C>0$ denotes a generic constant, independent of~$x \in \Om$ and of~$\rho_\ell$.

  Since $\G_{\rho_\ell} (u_\ell)$ is bounded, we have that $u_\ell \in \GSBV^2 (\Om)$. From \eqref{scaledStrain}, we have that, for sufficiently large~$\ell$,
  \begin{align}\label{uniformBound}
      \abs{\nabla u_\ell}^2 & =  \sum_{\alpha} \abs{2  \strain_{\alpha 3, \rho_\ell}(u_\ell)}^2 + \abs{ \rho_\ell  \strain_{3 3, \rho_\ell} (u_\ell) }^2 \\
      & \leq 4 \sum_{i,j} \abs{ \strain_{i j, \rho_\ell}(u_\ell)}^2
      \leq C \stiffness_{\rho_\ell}^{ijkl}  \strain_{ij, \rho_\ell}(u_\ell)  \strain_{kl, \rho_\ell}(u_\ell) \,, \nonumber
  \end{align}
  where the last inequality follows from \eqref{eq:positive-definite-stiffness}. Furthermore, from Proposition~\ref{scaledFuncProp} we infer that
  \begin{equation}
    \label{eq:uniform-lower-bound}
    C \leq [\nu_{u_\ell}]_\alpha g_{\rho_\ell}^{\alpha \beta} [\nu_{u_\ell}]_\beta +  \frac{1}{\rho_\ell^2} \bigl([\nu_{u_\ell}]_3\bigr)^2 \quad \text{and} \quad C \leq g_{\rho_\ell}.
  \end{equation}
  As a consequence, there holds
  \begin{equation*}
    C \biggl( \int_{\Om} \abs{\nabla u_\ell}^2 \,\dd x + \HH^2\bigl( S_{u_\ell} \bigr) \biggr) \leq \G_{\rho_\ell} (u_\ell) \leq \sup_{\ell}\,\G_{\rho_\ell}(u_{\ell})<+\infty.
  \end{equation*}
  Because of the $L^1$\nobreakdash{-}convergence of $u_\ell$, $\norm{u_\ell}_{L^1(\Om)}$ is uniformly bounded. Thus, by compactness properties of $\GSBV^2(\Om)$ (see,  e.g., Theorem~4.36 in \cite{AmbFusPal2000}), there holds $u \in \GSBV^2(\Om)$ and $\nabla u_\ell \wto \nabla u$ weakly in $L^2(\Om;\R^n)$.

  Applying Theorem~5.8 from \cite{AmbFusPal2000}, we obtain
  \begin{equation*}
    \int_{\Om} \abs{\partial_3 u}^2 \,\dd x \leq \liminf_{\ell \to \infty} \int_{\Om} \abs{\partial_3 u_\ell}^2 \,\dd x.
  \end{equation*}
  Hence, using \eqref{scaledStrain} and \eqref{uniformBound}, we have
  \begin{align*}
    \int_{\Om} \abs{\partial_3 u}^2 \,\dd x \leq
    \liminf_{\ell \to \infty} \rho_\ell^2 \int_{\Om} \bigabs{ \strain_{3 3, \rho_\ell} (u_\ell)}^2 \,\dd x
    \leq C \liminf_{\ell \to \infty} \rho_\ell^2 \G_{\rho_\ell} (u_\ell) = 0,
  \end{align*}
  which yields $\partial_3 u = 0$. Now, we show that $[\nu_u]_3= 0$.
  From Theorem~5.22 in \cite{AmbFusPal2000}, this lower semi-continuity property follows:
  For every $\tilde \rho >0$,
  \begin{align}\label{eq:lower-semi-conti-prop}
  \int_{S_u} & \sqrt{[\nu_{u}]_\alpha a^{\alpha \beta} [\nu_u]_\beta + \frac{1}{\tilde \rho^2} \bigabs{[\nu_u]_3}^2 } \sqrt{a} \,\dd \HH^2  \\
  & \leq \displaystyle \liminf_{\ell \to \infty} \int_{S_{u_\ell}} \sqrt{ [\nu_{u_\ell}]_\alpha a^{\alpha \beta} [\nu_{u_\ell}]_\beta + \frac{1}{\tilde \rho^2} \bigabs{[\nu_{u_\ell}]_3}^2 } \sqrt{a} \,\dd \HH^2 \,.  \nonumber
  \end{align}
  This yields that, for every $\tilde \rho >0$,
  \begin{equation}\label{eq:nu3-bound-1}
    \frac{1}{\tilde \rho}  \int_{S_{u}} \bigabs{[\nu_u]_3} \sqrt{a} \,\dd \HH^2
    \leq  \liminf_{\ell \to \infty} \int_{S_{u_\ell}} \sqrt{[\nu_{u_\ell}]_\alpha a^{\alpha \beta} [\nu_{u_\ell}]_\beta + \frac{1}{\tilde \rho^2} \bigabs{[\nu_{u_\ell}]_3}^2 } \sqrt{a} \,\dd \HH^2.
  \end{equation}
  From  Proposition~\ref{scaledFuncProp},~\eqref{eq:bound-G}, and \eqref{eq:uniform-lower-bound}, for sufficiently large $\ell$ we deduce
  \begin{align}
  \label{eq:fracture-estimate}
  \int_{S_{u_\ell}}&  \sqrt{[\nu_{u_\ell}]_\alpha a^{\alpha \beta} [\nu_{u_\ell}]_\beta + \frac{1}{\tilde \rho^2} \bigabs{[\nu_{u_\ell}]_3}^2 } \sqrt{a} \, \dd \HH^{2} \\
      & \leq \int_{S_{u_\ell}} \sqrt{[\nu_{u_\ell}]_\alpha g_{\rho_\ell}^{\alpha \beta} [\nu_{u_\ell}]_\beta + \frac{1}{\rho_\ell^2} \bigabs{[\nu_{u_\ell}]_3}^2 } \sqrt{g_{\rho_\ell}} \, \dd \HH^{2} + C \sqrt{\rho_\ell} + C \rho_\ell  \nonumber \\
      &\vphantom{\int} \leq \G_{\rho_\ell} (u_\ell) + C \sqrt{\rho_\ell}  + C \rho_\ell. \nonumber
  \end{align}
  By assumption~\eqref{eq:bound-G}, the right-hand side of~\eqref{eq:fracture-estimate} turns out to be uniformly bounded. Thus, combining~\eqref{eq:nu3-bound-1} and~\eqref{eq:fracture-estimate}, we derive
  \begin{equation*}
    \int_{S_{u}} \bigabs{[\nu_u]_3} \sqrt{a} \,\dd \HH^2 \leq C \sqrt{\tilde \rho} \quad \text{for every }
    \tilde \rho > 0.
  \end{equation*}
  The previous inequality implies that $[\nu_u]_3 = 0$ on~$S_u$, so that $u\in \U$.

  As in \eqref{uniformBound}, we obtain that $\norm{\partial_\alpha u_\ell}_{L^2(\Om)}$ and $\norm{ \strain_{\alpha \beta, \rho} (u_\ell)}_{L^2(\Om)}$ are uniformly bounded. Then, the weak convergence follows from
\eqref{scaledStrain} and \eqref{scaledChristoffel}.
\end{proof}

We now prove the $\liminf$\nobreakdash{-}inequality.
\begin{proposition}\label{liminfIneq}
  Under the same hypotheses as in Theorem~\ref{thm:dim-reduction}, there holds $\G_0 \leq \Gliminf_{\rho\to 0} \G_{\rho}$.
\end{proposition}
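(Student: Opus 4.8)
The plan is to fix an arbitrary null sequence $\{\rho_\ell\}$ and functions $u_\ell\to u$ in $L^1(\Om)$ and to prove $\G_0(u)\le\liminf_{\ell\to\infty}\G_{\rho_\ell}(u_\ell)$. There is nothing to prove if the $\liminf$ is $+\infty$, so we may pass to a subsequence (not relabeled) along which $\G_{\rho_\ell}(u_\ell)$ converges to a finite limit; in particular $\sup_\ell\G_{\rho_\ell}(u_\ell)<\infty$ and Lemma~\ref{Lemma1} applies, giving $u\in\U$ (so $\G_0(u)=G_0(u)$) and, after a further subsequence, $\strain_{\alpha\beta,\rho_\ell}(u_\ell)\wto -b_{\alpha\beta}u$ and $\partial_\alpha u_\ell\wto\partial_\alpha u$ weakly in $L^2(\Om)$. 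Since the bulk and the surface contributions to $\G_{\rho_\ell}$ are separately nonnegative, superadditivity of the $\liminf$ reduces the statement to two independent estimates: the bulk part of $G_0(u)$ is bounded by the $\liminf$ of the bulk part of $\G_{\rho_\ell}(u_\ell)$, and likewise for the surface parts.

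For the bulk term I would proceed by convexification in the thickness variable, as in the classical dimension-reduction arguments. First, by Propositions~\ref{scaledFuncProp} and~\ref{elasticityFormulas} the coefficients of $\stiffness_{\rho_\ell}$ and the weight $\sqrt{g_{\rho_\ell}}$ differ from their $\rho\to0$ limits (the tensor built from $a^{\alpha\beta}$ and $\sqrt a$) by $O(\rho_\ell)$ uniformly in $x$; since $\|\strain_{ij,\rho_\ell}(u_\ell)\|_{L^2(\Om)}$ is uniformly bounded (by the coercivity \eqref{eq:positive-definite-stiffness}, the uniform bounds in Proposition~\ref{scaledFuncProp}, and the energy bound), replacing $\stiffness_{\rho_\ell}\sqrt{g_{\rho_\ell}}$ by its limit costs only an $o(1)$ error in the energy. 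Next I use the pointwise bound obtained by minimizing the limiting quadratic form over the single scalar $\strain_{33}$: its only coupling with the in-plane strains is the coefficient $\lambda a^{\alpha\beta}$ and it does not couple with the shear strains $\strain_{\alpha 3}$, so the partial minimum equals $\stiff^{\alpha\beta\sigma\tau}\strain_{\alpha\beta}\strain_{\sigma\tau}+4\mu a^{\alpha\beta}\strain_{\alpha 3}\strain_{\beta 3}$, which by $\strain_{\alpha 3,\rho_\ell}(u_\ell)=\tfrac12\partial_\alpha u_\ell$ (see \eqref{scaledStrain}) becomes $\stiff^{\alpha\beta\sigma\tau}\strain_{\alpha\beta,\rho_\ell}(u_\ell)\strain_{\sigma\tau,\rho_\ell}(u_\ell)+\mu a^{\alpha\beta}\partial_\alpha u_\ell\partial_\beta u_\ell$. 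The functional $(\xi,\eta)\mapsto\tfrac12\int_\Om\bigl(\stiff^{\alpha\beta\sigma\tau}\xi_{\alpha\beta}\xi_{\sigma\tau}+\mu a^{\alpha\beta}\eta_\alpha\eta_\beta\bigr)\sqrt a\,\dd x$ is a nonnegative quadratic form, hence convex and weakly lower semicontinuous on $L^2(\Om)$; evaluated along $\strain_{\alpha\beta,\rho_\ell}(u_\ell)\wto -b_{\alpha\beta}u$ and $\partial_\alpha u_\ell\wto\partial_\alpha u$, it reproduces exactly the first two integrals in $G_0(u)$.

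For the surface term I would avoid applying a $\GSBV$ lower-semicontinuity theorem to the limiting density $\kappa\sqrt{[\nu]_\alpha a^{\alpha\beta}[\nu]_\beta}\,\sqrt a$ directly, since as a function of $\nu\in\R^3$ it is degenerate (it ignores $[\nu]_3$). Instead, I fix an arbitrary $\tilde\rho>0$ and invoke \eqref{eq:lower-semi-conti-prop} for this $\tilde\rho$: because $u\in\U$ satisfies $[\nu_u]_3=0$, its left-hand side equals $\kappa\int_{S_u}\sqrt{[\nu_u]_\alpha a^{\alpha\beta}[\nu_u]_\beta}\,\sqrt a\,\dd\HH^2$, the surface part of $G_0(u)$; while for $\ell$ large enough that $\rho_\ell\le\tilde\rho$, estimate \eqref{eq:fracture-estimate} bounds the $\tilde\rho$-regularized surface integral of $u_\ell$ by the genuine surface integral appearing in $\G_{\rho_\ell}(u_\ell)$ plus $C\sqrt{\rho_\ell}+C\rho_\ell$, which vanishes since $\HH^2(S_{u_\ell})$ is uniformly bounded by the energy bound. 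Taking the $\liminf$ gives the surface inequality, and adding it to the bulk inequality concludes.

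I expect the surface term to be the delicate point: because the candidate limit density does not control the $x_3$-component of the normal, the $\liminf$-inequality cannot follow from a single application of a standard lower-semicontinuity/slicing result, and one must detour through the positive-definite, $\tilde\rho$-regularized densities and compare the $\tilde\rho$- and $\rho_\ell$-weights uniformly in $\ell$ — precisely the content of \eqref{eq:lower-semi-conti-prop} and \eqref{eq:fracture-estimate}. The bulk term, though standard in spirit, still requires attention so that the $O(\rho_\ell)$ perturbations of the metric and of the stiffness tensor are absorbed uniformly, which is where the a priori $L^2$-bounds on the scaled strains enter.
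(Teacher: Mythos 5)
Your proposal is correct and follows essentially the same route as the paper: compactness and identification of the limit via Lemma~\ref{Lemma1}, elimination of $\strain_{33}$ (your partial minimization over the $33$-component is exactly the paper's completing-the-square decomposition, whose dropped square is the nonnegative term $I^{(2)}_{\rho_\ell}$), and the $\tilde\rho$-regularization of \eqref{eq:lower-semi-conti-prop}--\eqref{eq:fracture-estimate} for the surface energy. The only cosmetic difference is in the bulk term, where the paper keeps the $\rho_\ell$-dependent coefficients in the decomposition and treats the zero-order in-plane contribution by Fatou's lemma with pointwise a.e.\ convergence, whereas you freeze the coefficients first (justified by the uniform $O(\rho_\ell)$ rates and the a priori $L^2$-bounds on the scaled strains) and then use weak $L^2$ lower semicontinuity of the reduced quadratic form throughout; both variants rest on the convergences established in Lemma~\ref{Lemma1}.
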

\begin{proof}
  Let $\lbrace\rho_\ell\rbrace$, with $\rho_\ell > 0$, be a null sequence, and let $u_\ell$ be a sequence converging in $L^1 (\Omega)$ to $u \in L^1 (\Omega)$. Without loss of generality, we can assume that $\liminf_{\ell \to \infty} \G_{\rho_\ell} (u_\ell) = \lim_{\ell \to \infty} \G_{\rho_\ell} (u_\ell) < \infty$.
From Lemma~\ref{Lemma1}, it follows that $u \in \U$.

After some algebraic manipulations, we have
  \begin{equation}\label{completedSquares}
    \G_{\rho_\ell} (u_\ell)
    = I^{(1)}_{\rho_\ell} (u_\ell)  + I^{(2)}_{\rho_\ell} (u_\ell) + I^{(3)}_{\rho_\ell} (u_\ell) + I^{(4)}_{\rho_\ell} (u_\ell)
  \end{equation}
  with
  \begin{align*}
 	 &\begin{aligned}
    I^{(1)}_{\rho_\ell} (u_\ell) \coloneq \half\int_{\Omega} \biggl(& \frac{2\lambda\mu}{\lambda + 2\mu} g_{\rho_\ell}^{\alpha \beta} g_{\rho_\ell}^{\sigma \tau} +\mu (g_{\rho_\ell}^{\alpha \sigma} g_{\rho_\ell}^{\beta \tau} + g_{\rho_\ell}^{\alpha \tau} g_{\rho_\ell}^{\beta \sigma}) \biggr)  \\
    & \vphantom{\int} \times \strain_{\alpha \beta, \rho_\ell} (u_\ell)\strain_{\sigma \tau, \rho_\ell} (u_\ell) \sqrt{g_{\rho_\ell}} \,\dd x
    \end{aligned}
    \\[1mm]
    &I^{(2)}_{\rho_\ell} (u_\ell) \coloneq \half\int_{\Omega} ( \lambda + 2\mu) \biggl( \frac{\lambda}{\lambda + 2 \mu} g^{\alpha \beta}_{\rho_\ell}  \strain_{\alpha \beta, \rho_\ell} (u_\ell) +  \strain_{3 3, \rho_\ell} (u_\ell) \biggr)^2 \sqrt{g_{\rho_\ell}} \,\dd x
    \\[1mm]
    &I^{(3)}_{\rho_\ell} (u_\ell)\coloneq 2 \mu \int_{\Omega}  g_{\rho_\ell}^{\alpha \beta}  \strain_{\alpha 3, \rho_\ell} (u_\ell)  \strain_{\beta 3, \rho_\ell} (u_\ell) \sqrt{g_{\rho_\ell}} \,\dd x
    \\[1mm]
    &I^{(4)}_{\rho_\ell} (u_\ell)\coloneq \kappa \int_{S_{u_\ell}}  \sqrt{[\nu_{u_\ell}]_\alpha g_{\rho_\ell}^{\alpha \beta} [\nu_{u_\ell}]_\beta +  \frac{1}{\rho_\ell^2} \bigabs{[\nu_{u_\ell}]_3}^2 } \sqrt{g_{\rho_\ell}} \,\dd \HH^2.
  \end{align*}
  We now prove the $\liminf$-inequality for $I^{(1)}_{\rho_\ell} $, $I^{(3)}_{\rho_\ell}$ and $I^{(4)}_{\rho_\ell}$, whereas the term~$I^{(2)}_{\rho_\ell}$ need not be estimated, being non-negative.

  From pointwise convergence (up to a subsequence) of $u_\ell$ almost everywhere and from Proposition~\ref{scaledFuncProp} we derive the pointwise convergence of the integrand of~$I^{(1)}_{\rho_\ell} (u_\ell)$. Hence, by Fatou lemma, we obtain
  \begin{equation}\label{eq:liminf-1}
    \half \int_{\Om} \stiff^{\alpha \beta \sigma \tau} b_{\alpha \beta} b_{\sigma \tau} \abs{u}^2 \sqrt{a} \,\dd x
    \leq \liminf_{\ell \to \infty} I^{(1)}_{\rho_\ell} (u_\ell).
  \end{equation}

  In view of~\eqref{eq:metric-norm-equivalence}, the map $v \mapsto \int_{\omega} a^{\alpha \beta} v_\alpha v_\beta \sqrt{a} \,\dd x$ is a norm in~$L^2 (\Om)$ and is therefore weakly lower semi-continuous in~$L^{2}(\Om)$. Hence, using the weak convergence $\strain_{\alpha 3, \rho_\ell} (u_\ell) \wto \half \partial_{\alpha} u$ in $L^2(\Om)$ proved in Lemma~\ref{Lemma1}, we obtain
  \begin{equation*}
    \frac{1}{4} \int_{\Om} a^{\alpha \beta} \partial_{\alpha} u  \partial_{\beta} u \sqrt{a} \,\dd x
    \leq \liminf_{\ell \to \infty} \int_{\Om} a^{\alpha \beta}  \strain_{\alpha 3, \rho_\ell} (u_\ell)  \strain_{\beta 3, \rho_\ell} (u_\ell)  \sqrt{a} \,\dd x.
  \end{equation*}
  From~\eqref{eq:metric-norm-equivalence} and from Proposition~\ref{scaledFuncProp}, for sufficiently large $\ell$ it holds
  \begin{align*}
    \int_{\Om} & a^{\alpha \beta}  \strain_{\alpha 3, \rho_\ell} (u_\ell)  \strain_{\beta 3, \rho_\ell} (u_\ell)  \sqrt{a} \,\dd x \\
   & \leq \displaystyle\int_{\Om} g_{\rho_\ell}^{\alpha \beta}  \strain_{\alpha 3, \rho_\ell} (u_\ell)  \strain_{\beta 3, \rho_\ell} (u_\ell)  \sqrt{g_{\rho_\ell}} \,\dd x + C \sqrt{\rho_{\ell}} \sum_{\alpha} \| \strain_{\alpha 3, \rho_\ell} (u_\ell)\|_{L^2(\Om)}^{2} \,,
  \end{align*}
  namely,
  \begin{equation}\label{eq:liminf-3}
    \frac{\mu}{2} \int_{\omega} a^{\alpha \beta} \partial_{\alpha} u \cdot \partial_{\beta} u \sqrt{a} \,\dd x \leq \liminf_{\ell \to \infty} I^{(3)}_{\rho_\ell} (u_\ell).
  \end{equation}
    Proceeding as in~\eqref{eq:lower-semi-conti-prop}--\eqref{eq:fracture-estimate}, for every $\tilde \rho >0$,
we have that
  \begin{align}
  \label{eq:liminf-4}
  \kappa\int_{S_u} & \sqrt{[\nu_u]_\alpha a^{\alpha \beta} [\nu_u]_\beta } \sqrt{a} \,\dd \HH^1 \\
      &\leq \liminf_{\ell \to \infty} \kappa \int_{S_{u_\ell}} \sqrt{[\nu_{u_\ell}]_\alpha a^{\alpha \beta} [ \nu_{u_\ell} ]_\beta  + \frac{1}{\tilde \rho^2} \bigabs{[ \nu_{u_\ell}]_3}^2 } \sqrt{a} \,\dd \HH^2 \nonumber \\
      &\leq \liminf_{\ell \to \infty} \kappa \int_{S_{u_\ell}} \sqrt{[\nu_{u_\ell}]_\alpha g_{\rho_\ell}^{\alpha \beta} [\nu_{u_\ell}]_\beta  + \frac{1}{\rho_\ell^2} \bigabs{[\nu_{u_\ell}]_3}^2 } \sqrt{g}\,\dd\HH^2 + C \sqrt{\rho_\ell} \nonumber \\
      &\vphantom{\int} = \liminf_{\ell \to \infty} I^{(4)}_{\rho_\ell} (u_\ell)\,. \nonumber
  \end{align}

  Summing up \eqref{eq:liminf-1}--\eqref{eq:liminf-4} and using that $I^{(2)}_{\rho_\ell}$ is non-negative, we deduce that
  \begin{align*}
    \G_0 (u) \leq{}& \liminf_{\ell \to \infty} \G_{\rho_\ell} (u_\ell),
  \end{align*}
  which concludes the proof.
\end{proof}

In the next proposition we prove the $\limsup$\nobreakdash{-}inequality.
\begin{proposition}\label{prop:dim-red-limsup}
  Under the same hypotheses as in Theorem~\ref{thm:dim-reduction}, there holds $\Glimsup_{\rho\to 0} \G_\rho \leq \G_0$.
\end{proposition}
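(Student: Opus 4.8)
The plan is to construct, for each $u$ with $\G_0(u)<\infty$, a recovery family $u_\rho\to u$ in $L^1(\Om)$ with $\limsup_{\rho\to0}\G_\rho(u_\rho)\le\G_0(u)$. Since $\G_0(u)<\infty$ forces $u\in\U$, we identify $u$ with a function in $\GSBV(\om)$, independent of $x_3$, via Remark~\ref{r-nox3}. The crucial point is that the constant family $u_\rho\equiv u$ is \emph{not} a recovery family: in the decomposition \eqref{completedSquares} one has $\strain_{33,\rho}(u)=0$, so $I^{(2)}_\rho(u)$ tends to $\tfrac12\int_\Om\tfrac{\lambda^2}{\lambda+2\mu}(a^{\alpha\beta}b_{\alpha\beta})^2\abs{u}^2\sqrt a\,\dd x$, which is nonzero in general and has no counterpart in $G_0$. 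Following the standard dimension-reduction strategy, one restores the missing transverse degree of freedom through an ansatz affine in $x_3$,
\[
  u_\rho(x)\coloneq u(x_1,x_2)+\rho\,x_3\,\psi(x_1,x_2),\qquad
  \psi\coloneq\frac{\lambda}{\lambda+2\mu}\,a^{\alpha\beta}b_{\alpha\beta}\,u\,,
\]
chosen so that $\strain_{33,\rho}(u_\rho)=\tfrac1\rho\partial_3u_\rho=\psi$ cancels, to leading order, the term $\tfrac{\lambda}{\lambda+2\mu}g_\rho^{\alpha\beta}\strain_{\alpha\beta,\rho}(u_\rho)$, whose limit is $-\tfrac{\lambda}{\lambda+2\mu}a^{\alpha\beta}b_{\alpha\beta}u$ by \eqref{scaledStrain} and Proposition~\ref{scaledFuncProp}; this makes $I^{(2)}_\rho$ vanish in the limit, while the correction perturbs $\strain_{\alpha3,\rho}$ and the jump set only at order $O(\rho)$.

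A reduction to bounded $u$ is needed, and here it is genuinely essential rather than merely convenient: a function in $\U$ need not lie in $L^2(\Om)$ — for instance a $\GSBV^2$ function with vanishing approximate gradient whose jump heights blow up while $\HH^1$ of its jump set stays finite, over a region where $b_{\alpha\beta}\equiv0$ — and then $\G_\rho(u)=+\infty$ for every $\rho>0$. I would therefore first prove $\Glimsup_{\rho\to0}\G_\rho(u)\le\G_0(u)$ for $u\in\U\cap L^\infty(\Om)$, and then pass to general $u\in\U$ by truncation. For $u^M\coloneq(u\wedge M)\vee(-M)$ one has $u^M\in\U\cap L^\infty(\Om)$, $u^M\to u$ in $L^1(\Om)$, and $\G_0(u^M)\le\G_0(u)$ thanks to $\abs{u^M}\le\abs{u}$, $\abs{\nabla u^M}\le\abs{\nabla u}$ a.e., $S_{u^M}\subseteq S_u$, $\nu_{u^M}=\nu_u$ on $S_{u^M}$, and $\stiff^{\alpha\beta\sigma\tau}b_{\alpha\beta}b_{\sigma\tau}\ge0$; since $\G_0$ is $L^1$-lower semicontinuous (by standard lower semicontinuity in $\GSBV$, cf. Lemma~\ref{Lemma1} and Proposition~\ref{liminfIneq}) this yields $\G_0(u^M)\to\G_0(u)$, and since $u\mapsto\Glimsup_{\rho\to0}\G_\rho(u)$ is $L^1$-lower semicontinuous, a diagonal argument transfers the inequality from the energy-dense class $\U\cap L^\infty(\Om)$ to all of $\U$.

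It remains to carry out the limit for $u\in\U\cap L^\infty(\Om)$. Then $\psi\in L^\infty(\Om)$, $\nabla\psi\in L^2(\Om)$ and $S_\psi\subseteq S_u$, so $u_\rho\in\GSBV^2(\Om)$, hence $\G_\rho(u_\rho)=G_\rho(u_\rho)$; moreover, for $\rho$ small, $S_{u_\rho}=S_u$ and $\nu_{u_\rho}=\nu_u$ (in particular $[\nu_{u_\rho}]_3=0$), while $\norm{u_\rho-u}_{L^1(\Om)}\le\tfrac\rho2\norm{\psi}_{L^1(\Om)}\to0$. Passing to the limit in the four terms of \eqref{completedSquares}: by \eqref{scaledStrain}, Proposition~\ref{scaledFuncProp} and boundedness of $u$, one has $\strain_{\alpha\beta,\rho}(u_\rho)=-\Lambda^3_{\alpha\beta,\rho}(u+\rho x_3\psi)\to-b_{\alpha\beta}u$ and $\strain_{\alpha3,\rho}(u_\rho)=\tfrac12\partial_\alpha u+\tfrac\rho2 x_3\partial_\alpha\psi\to\tfrac12\partial_\alpha u$ strongly in $L^2(\Om)$, whence $I^{(1)}_\rho(u_\rho)\to\tfrac12\int_\Om\stiff^{\alpha\beta\sigma\tau}b_{\alpha\beta}b_{\sigma\tau}\abs{u}^2\sqrt a\,\dd x$ and $I^{(3)}_\rho(u_\rho)\to\tfrac\mu2\int_\Om a^{\alpha\beta}\partial_\alpha u\partial_\beta u\sqrt a\,\dd x$; by the choice of $\psi$ and the uniform rates in Proposition~\ref{scaledFuncProp}, the integrand of $I^{(2)}_\rho(u_\rho)$ is $O(\rho^2)(\abs{u}+\abs{\psi})^2$, so $I^{(2)}_\rho(u_\rho)\to0$; finally $I^{(4)}_\rho(u_\rho)=\kappa\int_{S_u}\sqrt{[\nu_u]_\alpha g_\rho^{\alpha\beta}[\nu_u]_\beta}\,\sqrt{g_\rho}\,\dd\HH^2\to\kappa\int_{S_u}\sqrt{[\nu_u]_\alpha a^{\alpha\beta}[\nu_u]_\beta}\,\sqrt a\,\dd\HH^2$ by dominated convergence on $S_u$, which has finite $\HH^2$-measure. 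Summing the four limits gives $G_0(u)=\G_0(u)$.

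I expect the only genuinely delicate step to be the design of the corrector $\psi$, which must simultaneously belong to $\GSBV^2(\Om)$, leave the shear term $I^{(3)}_\rho$ and the surface term $I^{(4)}_\rho$ unaffected in the limit, and cancel $I^{(2)}_\rho$; once this ansatz and the accompanying truncation are in place, every convergence above follows routinely from Propositions~\ref{scaledFuncProp} and~\ref{elasticityFormulas} together with the dominated convergence theorem.
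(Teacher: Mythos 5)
Your proposal is correct and follows essentially the same route as the paper: truncate to reduce to $u\in\SBV^2(\Om)\cap L^\infty(\Om)$, then build a recovery sequence with a transverse corrector proportional to $\tfrac{\lambda}{\lambda+2\mu}a^{\alpha\beta}b_{\alpha\beta}u$ designed to annihilate $I^{(2)}_{\rho}$, and pass to the limit term by term in the decomposition \eqref{completedSquares}. The only difference is cosmetic: you take the affine-in-$x_3$ ansatz $u+\rho x_3\psi$, while the paper uses the exponential $u\exp\bigl(\tfrac{\lambda}{\lambda+2\mu}a^{\alpha\beta}b_{\alpha\beta}\rho x_3\bigr)$; the two agree to first order in $\rho$ and require the same verifications (including the same implicit regularity of $a^{\alpha\beta}b_{\alpha\beta}$ when differentiating the corrector in $x_\alpha$).
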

\begin{proof}
  Let $\lbrace \rho_\ell \rbrace$, with $\rho_\ell > 0$, be a sequence such that $\rho_\ell \to 0$ as $\ell \to \infty$. We can assume that $\G_0 (u) < +\infty$ and thus $u\in \U$, otherwise, from Proposition~\ref{liminfIneq},
we have that $\liminf_{\ell \to \infty} \G_{\rho_\ell} (u) = {+} \infty$ and there is nothing to prove.
Moreover, setting $u^P\coloneq (- P) \vee u \wedge P$ for $P>0$, we clearly have that $u^P \to u$ in~$L^{1}(\Om)$ and $\G_0(u^P) \to \G_0(u)$ for $P \to +\infty$.
Therefore, we may just consider $u\in \SBV^{2}(\Om)\cap L^{\infty}(\Om)$.

  We pick the sequence $u_{\ell}$ in~$\SBV^2(\Om) \cap L^{\infty}(\Om)$ defined for all $\ell \in \N$ by
  \begin{equation*}
    u_{\ell} (x) = u(x_1, x_2) \exp\biggl( \frac{\lambda}{\lambda + 2 \mu} \, a^{\alpha \beta} b_{\alpha \beta} \,\rho_\ell \, x_3 \biggr)  \quad \text{for $x = (x_1, x_2, x_3) \in \Om$}.
  \end{equation*}
  It turns out that $u_\ell \to u$ in $L^1(\Om)$ as $\ell \to \infty$ and that $u_\ell$ is bounded in  $L^{\infty}(\Om)$. Starting from \eqref{completedSquares}, we show that each term~$I^{(k)}_{\rho_\ell} (u_\ell)$ (for $k=1,2,3,4$) converges as expected.

  Since all the functions involved in the exponential are uniformly bounded, it holds $\abs{u_\ell} \leq C \abs{u}$ for some constant $C>0$. Moreover, we deduce from~\eqref{scaledChristoffel} in Proposition~\ref{scaledFuncProp} that
  \begin{equation}\label{eq:limsup-strain-bound}
    \bigabs{ \strain_{\alpha \beta, \rho_\ell} (u_\ell)} =  \abs{\Lambda_{\alpha \beta ,\rho_\ell}^3 u_\ell} \leq C \abs{b_{\alpha \beta} u} + C \rho_\ell \abs{u}.
  \end{equation}
  Since $u, b_{\alpha \beta} \in L^\infty (\Om)$, the right-hand side of \eqref{eq:limsup-strain-bound} is bounded, and hence, in view of \eqref{scaledStrain} and Proposition~\ref{scaledFuncProp},~$\strain_{\alpha \beta, \rho_\ell} (u_\ell) \to -b_{\alpha\beta} u$ in $L^{2}(\Om)$.
  From~\eqref{eq:positive-definite-stiffness} (replacing~$\lambda$ with $\frac{2\lambda\mu}{\lambda + 2\mu}$),
we infer that there exists a constant $C > 0$ such that
  \begin{multline*}
    \biggl(\frac{2\lambda\mu}{\lambda + 2\mu} g_{\rho_\ell}^{\alpha \beta} g_{\rho_\ell}^{\sigma \tau} +\mu (g_{\rho_\ell}^{\alpha \sigma} g_{\rho_\ell}^{\beta \tau} + g_{\rho_\ell}^{\alpha \tau} g_{\rho_\ell}^{\beta \sigma}) \biggr)  \strain_{\alpha \beta, \rho_\ell} (u_\ell)  \strain_{\sigma \tau, \rho_\ell} (u_\ell) \leq C  \sum_{\alpha,\beta} \bigabs{  \strain_{\alpha \beta, \rho_\ell} (u_\ell)}^2.
  \end{multline*}
  Therefore, by the dominated convergence theorem, it follows that
  \begin{equation}\label{eq:limsup-1}
    \lim_{\ell \to \infty} I^{(1)}_{\rho_\ell} (u_\ell)= \half \int_\Om \stiff^{\alpha \beta \sigma \tau} b_{\alpha \beta} b_{\sigma \tau} \abs{u}^2 \sqrt{a} \, \dd x \,.
  \end{equation}

  Moving to the term $I^{(3)}_{\rho_\ell}$, we have that $\bigabs{ \strain_{\alpha 3}(u_\ell)}  \leq C \abs{\partial_\alpha u} \in L^2(\Om)$, so that, using~\eqref{eq:metric-tensor-bound}, we deduce that
  \begin{equation}\label{eq:limsup-3}
    \lim_{\ell \to \infty} I^{(3)}_{\rho_\ell}(u_\ell) = \frac{\mu}{2} \int_{\Om} a^{\alpha \beta} \partial_\alpha u \partial_\beta u \sqrt{a}\,\dd x.
  \end{equation}
Since it holds that
  \begin{equation*}
    S_{u_\ell} = S_u \,, \quad [\nu_{ u_\ell}]_3 = 0\,, \quad \text{and} \quad  [\nu_{u_\ell}]_\alpha =
    [\nu_u]_\alpha \,,
  \end{equation*}
  and thanks to Proposition~\ref{scaledFuncProp}, we obtain
  \begin{align}\label{eq:limsup-4}
        \lim_{\ell \to \infty}  I^{(4)}_{\rho_\ell}  &= \lim_{\ell\to\infty} \kappa \int_{S_u} \sqrt{[\nu_u]_\alpha g_{\rho_\ell} [\nu_u]_\beta } \sqrt{g_{\rho_\ell}} \,\dd \HH^2
        \\
        &
        = \kappa \int_{S_u}  \sqrt{[\nu_u]_\alpha a^{\alpha \beta} [\nu_u]_\beta } \sqrt{a} \,\dd \HH^2 . \nonumber
  \end{align}
Finally, we show that $I^{(2)}_{\rho_\ell} (u_\ell) \to 0$. With this aim, we note that
  \begin{equation*}
    \strain_{33, \rho_\ell} (u_\ell) = \frac{\lambda}{\lambda + 2 \mu} \, a^{\alpha \beta}  b_{\alpha \beta} \, u_\ell
  \end{equation*}
  and, therefore, by Proposition~\ref{scaledFuncProp} we have
  \begin{align*}
    \bigabs{g^{\alpha \beta}_{\rho_\ell}  \strain_{\alpha \beta, \rho_\ell} (u_\ell) + a^{\alpha \beta} b_{\alpha \beta} u_\ell } &\leq C\rho_\ell \abs{u_\ell}.
  \end{align*}
  Exploiting the fact that~$u_\ell \in L^{\infty}(\Om)$ and the uniformly bound
of~$\sqrt{g_{\rho_\ell}}$, we deduce
  \begin{equation*}
    \bigabs{I^{(2)}_{\rho_\ell} (u_\ell)} \leq C \rho_\ell^2 \norm{ u }_{L^\infty(\Om)}^{2},
  \end{equation*}
  which implies that $I^{(2)}_{\rho_\ell} (u_\ell) \to 0$. Eventually, this inequality, together with \eqref{eq:limsup-1}\nobreakdash--\eqref{eq:limsup-4}, implies that $\lim_{\ell \to \infty} \G_{\rho_\ell} (u_\ell) = \G_0 (u)$, which concludes the proof.
\end{proof}

We point out that the limit functional~$\G_0$ (or~$G_0$) is actually two dimensional.
Since the integrands do not depend on~$x_3$, as explained in Remark~\ref{r-nox3}, we can simply replace~$\Om$ with the two-dimensional domain~$\om$. Hence, for $u\in \GSBV^{2}(\om)$ we have
  \begin{align*}
    G_0(u) =\displaystyle \half & \int_{\om} \stiff^{\alpha \beta \sigma \tau} b_{\alpha \beta} b_{\sigma \tau} \abs{u}^2 \sqrt{a} \,\dd x + \frac{\mu}{2} \int_{\om} a^{\alpha \beta} \partial_{\alpha} u \partial_{\beta} u
  \sqrt{a} \,\dd x \\
    & + \displaystyle \kappa \int_{S_u} \sqrt{[\nu_u]_\alpha a^{\alpha \beta} [\nu_u]_\beta} \, \sqrt{a} \,\dd \HH^1\,.
  \end{align*}
Introducing the notation
\begin{equation*}
  b \coloneq \stiff^{\alpha \beta \sigma \tau} b_{\alpha \beta} b_{\sigma \tau} \sqrt{a} \quad \text{and} \quad A \coloneq (a^{\alpha \beta}) \sqrt{a},
\end{equation*}
we can rewrite $G_0$ as
\begin{equation*}
  G_0(u) = \half \int_{\om} b \abs{u}^2 \,\dd x + \frac{\mu}{2} \int_{\om} \nabla u^\top A \nabla u \,\dd x
  + \kappa \int_{S_u} \sqrt{\nu_u^\top A \nu_u \sqrt{a}} \,\dd \HH^1.
\end{equation*}
Notice that, due to \eqref{eq:metric-norm-equivalence}, the symmetric matrix $A(x) \in \R^{2\times 2}$ is positive definite, uniformly w.r.t.~$x\in \om$, i.e., there exist $0 < \alpha\leq \beta< +\infty$ such that
\begin{equation*}
 \alpha \abs{\zeta}^{2} \leq A(x) \zeta \cdot\zeta \leq \beta \abs{\zeta}^2 \qquad \text{for every } x\in\om \text{ and every } \zeta\in\R^2 \,.
\end{equation*}



\subsection{The Regularized Reduced Model}\label{cha:numerics}
The numerical minimization of the functional $G_0$ can be tackled via phase-field models (see, e.g., \cite{ArtForMicPer2015,Bou2007a,BouFraMar2000,BurOrtSue2010}).
The seminal idea can be ascribed to \cite{AmbTor1990,AmbTor1992}, where the authors introduce an additional smooth variable, the phase field, which describes the fracture set.
The results of \cite{AmbTor1990,AmbTor1992} have been generalized in many ways,\cite{BelBre2019,Bra1998,DalIur2013,Foc2001,Iur2013} including the case of vector displacements.\cite{ChaCri2019} In our setting, we need a slightly more general result compared with \cite{Foc2001}, as we have to take into account the spatial dependence of~$A$ in the phase-field term. The $\Gamma$-convergence result is stated in Theorem~\ref{thm:h1-phase-field} below, whose proof is provided in the Appendix.

\begin{theorem}\label{thm:h1-phase-field}
  For $\eps > 0$, let $\eta_\eps > 0$ be such that $\eta_\eps/\eps \to 0$ as $\eps \to 0$.
Define the family of functionals $\lbrace\F_\eps\rbrace_{\eps > 0}$,
with $\F_\eps \colon L^1(\om) \times L^1 (\om) \to \R$ such that
  \begin{align}
  \label{eq:h1-approx-functional}
    \F_\eps (u,v) \coloneq\half &  \int_\om b \abs{u}^2 \,\dd x + \frac{\mu}{2} \int_\om (v^2 + \eta_\eps) \nabla u^\top A \nabla u \,\dd x \\
    & +\kappa \int_\om \bigg[\frac{1}{4\eps}  (1 - v)^2 \sqrt{a} +  \eps \nabla v^\top A \nabla v\bigg] \,\dd x \,,\nonumber
  \end{align}
  for all $u\in H^1(\om), v \in H^1 (\om; [0,1]) $ and $\F_\eps (u,v) \coloneq +\infty$ otherwise.
  Then $\G_0 = \Glim_{\eps \to 0} \F_\eps$ in the $L^{1}$-topology.
\end{theorem}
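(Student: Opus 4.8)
The plan is to split $\F_\eps(u,v)=\tfrac12\int_\om b\abs{u}^2\,\dd x+\tilde\F_\eps(u,v)$, where
\[
  \tilde\F_\eps(u,v)\coloneq\frac\mu2\int_\om(v^2+\eta_\eps)\,\nabla u^\top A\nabla u\,\dd x+\kappa\int_\om\Bigl[\frac{(1-v)^2}{4\eps}\sqrt a+\eps\,\nabla v^\top A\nabla v\Bigr]\,\dd x ,
\]
and to prove that $\tilde\F_\eps$ $\Gamma$-converges, in the $L^1\times L^1$-topology, to the functional equal to $\tfrac\mu2\int_\om\nabla u^\top A\nabla u\,\dd x+\kappa\int_{S_u}\sqrt{\nu_u^\top A\nu_u\sqrt a}\,\dd\HH^1$ when $v\equiv1$ and $u\in\GSBV^2(\om)$, and $+\infty$ otherwise; this is the announced $x$-dependent, scalar version of the Ambrosio--Tortorelli result of \cite{Foc2001}, the new feature being only the continuous dependence of $A$ and $\sqrt a$ on $x$. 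The zero-order term then needs no separate treatment: since $b\in L^\infty(\om)$ and $b\ge0$ (by the remark following the definition of $\stiff^{\alpha\beta\sigma\tau}$), the map $u\mapsto\tfrac12\int_\om b\abs{u}^2\,\dd x$ is $L^1$-lower semicontinuous, which is all that is needed on top of the $\liminf$ inequality for $\tilde\F_\eps$, and it is reproduced exactly by recovery sequences of the form $u_\eps\equiv u$, which is all that is needed on top of the $\limsup$ inequality.

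For the $\liminf$ inequality I would fix $(u_\eps,v_\eps)\to(u,v)$ in $L^1\times L^1$ and, after passing to a subsequence realising a finite $\liminf_\eps\F_\eps(u_\eps,v_\eps)$, assume $\F_\eps(u_\eps,v_\eps)\le C$; then $\int_\om(1-v_\eps)^2\,\dd x\le C\eps\to0$ (using that $\sqrt a$ is bounded below by \eqref{eq:metric-norm-equivalence}), so $v_\eps\to1$ in $L^2(\om)$ and hence $v=1$. It would remain to show $u\in\GSBV^2(\om)$ together with the lower bound on bulk plus surface energy, which is the genuine Ambrosio--Tortorelli step, delicate because the bulk density $v_\eps^2\,\nabla u_\eps^\top A\nabla u_\eps$ degenerates along the incipient crack. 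I would localise, introducing $\tilde\F_\eps(u_\eps,v_\eps,B)$ for open $B\subseteq\om$, cover $\om$ by small cubes $Q_i$ on each of which, by continuity, $A$ and $\sqrt a$ are within $\delta$ of constant values $A_i,a_i$, and bound $\tilde\F_\eps(u_\eps,v_\eps,Q_i)$ from below, up to a multiplicative error $1-C\delta$, by the constant-coefficient functional with data $(A_i,a_i)$. The latter $\Gamma$-converges to $\tfrac\mu2\int_{Q_i}\nabla u^\top A_i\nabla u\,\dd x+\kappa\int_{S_u\cap Q_i}\sqrt{\nu_u^\top A_i\nu_u\sqrt{a_i}}\,\dd\HH^1$, which reduces to the isotropic scalar statement of \cite{Foc2001} via the linear change of variables $y=A_i^{-1/2}x$ that turns $\nabla w^\top A_i\nabla w$ into $\abs{\nabla_y w}^2$ and, tracking the Jacobian and the transformation of the jump normal, produces exactly the stated surface density (with $\sqrt{a_i}$ entering under the square root through the equipartition of the optimal transition profile). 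Superadditivity over the cubes, the De Giorgi--Letta criterion identifying the $\Gamma$-$\liminf$ with (the trace of) a measure, and then $\delta\to0$ yield the desired lower bound.

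For the $\limsup$ inequality I would use a density argument to reduce to $u\in\SBV^2(\om)\cap L^\infty(\om)$ whose jump set is the intersection with $\om$ of a finite union of closed segments and which is Lipschitz away from $S_u$: indeed $\G_0$ is $L^1$-lower semicontinuous, $\G_0(u^P)\to\G_0(u)$ for the truncations $u^P=(-P)\vee u\wedge P$ (as in the proof of Proposition~\ref{prop:dim-red-limsup}), and these are approximated in energy by such piecewise-smooth functions, so a diagonal argument transfers the recovery sequences to general $u$. For such $u$ I would take $u_\eps\coloneq u$ and let $v_\eps$ be the classical optimal-profile function attached to $d(x)\coloneq\dist(x,S_u)$: on the tube $\{d<T_\eps\}$, $T_\eps=\eps\abs{\log\eps}\to0$, set $v_\eps(x)=\gamma(d(x)/\eps)$ with $\gamma$ the monotone interpolant from $0$ to $1$ equipartitioning $\tfrac14(1-\gamma)^2$ against $(\gamma')^2$, and $v_\eps\equiv1$ elsewhere. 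Since $\abs{\nabla d}=1$ and $\nabla d=\pm\nu_u$ on the tube, one has $\nabla v_\eps^\top A\nabla v_\eps=(\gamma')^2\,\nu_u^\top A\nu_u$ there; writing the surface part of $\tilde\F_\eps(u_\eps,v_\eps)$ via the coarea formula for $d$, using Fubini, the continuity of $A$ and $\sqrt a$, and the fact that the two-sided tube contracts onto $S_u$, this part converges to $\kappa\int_{S_u}\sqrt{\nu_u^\top A\nu_u\sqrt a}\,\dd\HH^1$. The bulk part on $\{d<T_\eps\}$ vanishes because $\abs{\{d<T_\eps\}}\to0$ and $\nabla u^\top A\nabla u\in L^1(\om)$ (absolute continuity of the integral, $v_\eps\le1$), while on the complement $v_\eps\to1$ and dominated convergence gives $\tfrac\mu2\int_\om\nabla u^\top A\nabla u\,\dd x$; the term $\tfrac\mu2\eta_\eps\int_\om\nabla u^\top A\nabla u\,\dd x$ vanishes since $\eta_\eps\to0$; and $\tfrac12\int_\om b\abs{u_\eps}^2\,\dd x=\tfrac12\int_\om b\abs{u}^2\,\dd x$ identically. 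Hence $\F_\eps(u_\eps,v_\eps)\to\G_0(u)$.

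The main obstacle will be the $\liminf$ inequality: as in every Ambrosio--Tortorelli proof, recovering $\GSBV^2$-regularity of the limit and the sharp anisotropic surface density from the degenerate family $\tilde\F_\eps$ is the technical core, and here it is compounded by the freezing-of-coefficients localisation needed to import the constant-coefficient statement, where one must absorb the oscillations of $A$ and $\sqrt a$ uniformly in $\delta$ and justify the measure-property (De Giorgi--Letta) gluing. The $\limsup$ inequality, by contrast, is a routine adaptation of the classical optimal-profile construction once the anisotropic weight in the surface density has been matched, and the zero-order term is harmless throughout.
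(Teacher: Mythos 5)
Your $\liminf$ strategy (freeze the coefficients on small cubes, invoke the constant-coefficient Ambrosio--Tortorelli statement after the change of variables $y=A_i^{-1/2}x$, then glue by superadditivity/De Giorgi--Letta) is a legitimate alternative to what the paper does -- the paper instead slices to one dimension and proves a 1D lemma (Lemma~\ref{lemma:liminf-10}) that allows \emph{continuous} weights $f,g$, so no freezing of coefficients is needed; both routes are viable, yours being only sketched at exactly the points you flag as delicate.

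The genuine gap is in the $\limsup$ construction: you take $u_\eps\coloneq u$. After the density reduction, $u\in\SBV^2(\om)$ has a nonempty jump set, hence $u\notin H^1(\om)$, and by the very definition of $\F_\eps$ this gives $\F_\eps(u,v_\eps)=+\infty$ for every $v_\eps$; your sequence therefore cannot be a recovery sequence (you silently replace the Dirichlet integral by the integral of the \emph{approximate} gradient, which is not what $\F_\eps$ evaluates on non-Sobolev functions). The recovery sequence must modify $u$ in a thin tube around $S_u$ -- the paper takes $u_\eps=(1-\phi_\eps)u$ with a cutoff at scale $\delta_\eps=\sqrt{\eps\eta_\eps}$ -- and then the phase field must vanish identically on that tube (the paper's profile $\sigma_\eps$ is offset by $\tilde\delta_\eps$ for precisely this reason), since otherwise the term $v_\eps^2\nabla u_\eps^\top A\nabla u_\eps$ is not controlled where $\nabla u_\eps\sim\delta_\eps^{-1}$; finally the term $\eta_\eps\int|\nabla u_\eps|^2$ is of order $\eta_\eps/\delta_\eps=\sqrt{\eta_\eps/\eps}$, and killing it is exactly where the hypothesis $\eta_\eps/\eps\to0$ enters. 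The fact that your argument never uses $\eta_\eps/\eps\to0$ (only $\eta_\eps\to0$) is the telltale sign of this missing step. A smaller, fixable issue in the same construction: with $v_\eps=\gamma(d/\eps)$ on $\{d<T_\eps\}$ and $v_\eps\equiv1$ outside, $v_\eps$ is discontinuous across $\{d=T_\eps\}$ because $\gamma(|\log\eps|)<1$; one must truncate or rescale the profile (as in the paper's $\min\{1,(1+\eps)\sigma(\cdot)\}$) so that it reaches the value $1$ at the edge of the tube.
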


\begin{proof}
	See~\ref{appendix}.
\end{proof}
We remark that, loosely speaking, for small~$\eps$, the phase field minimizing~$\F_\eps$
is close to zero where the gradient of the displacement~$u$ is large, whereas
it approaches~1 elsewhere. This implies that the material is sound where~$v$ is close to~$1$, whereas
a fracture is detected where $v\ll 1$.
In particular, the third integral in~\eqref{eq:h1-approx-functional} converges to the length of the crack set.

With a view to the numerical approximation of the functional $\F_\eps$, for small~$\eps > 0$,
we restrict the function space to $H^1(\om) \times H^1(\om;[0,1])$, and omit the subscript~$\eps$,
as it will be fixed in the numerical test cases.
Moreover, for all $u \in H^1(\om)$, $v\in H^1(\om;[0,1])$, we introduce the stored elastic energy
\begin{equation}
\label{eq:elastic_energy}
  \E(u,v) \coloneq \half \int_\om b  \abs{u}^2 \,\dd x +  \frac{\mu}{2}\int_{\om} \bigl( v^{2} + \eta_{\eps} \bigr)\, \nabla u^\top A \nabla u \,\di x
\end{equation}
and the dissipation potential
\begin{equation}
\label{eq:dissipated_energy}
  \DD(v) \coloneq \kappa \int_\om \bigg[\frac{1}{4\eps}  (1 - v)^2 \sqrt{a} +  \eps \nabla v^\top A \nabla v\bigg]\,\dd x \,,
\end{equation}
so that
\begin{equation}
\label{eq:F}
  \F(u,v) = \E(u,v) + \DD(v)\,.
\end{equation}
Note that $\F (u,v)$ is Fréchet-differentiable in $H^1(\om) \times \big[H^1(\om) \cap L^\infty(\om)\big]$
(see, e.g., Proposition~1.1 in \cite{BurOrtSue2010}), with
\begin{align*}
  \partial_u \F (u,v)[\varphi] &= \int_\om b u \varphi \, \dd x + \mu \int_\om \bigl( v^2 + \eta_\eps \bigr)  \nabla u^\top A \nabla \varphi \,\dd x \,, \\
\partial_v \F (u,v)[\psi] &=\mu\int_\om v \psi \nabla u^\top A \nabla u \, \dd x  +  \kappa \int_{\om} \bigg[\frac{1}{2 \eps}  (v-1) \psi  \sqrt{a} + 2 \eps \, \nabla v^\top A \nabla \psi\bigg] \,\dd x \,,
\end{align*}
for all $u,\varphi \in H^1(\om)$, $v,\psi \in H^1(\om) \cap L^\infty(\om)$.

\section{The Discrete Setting: a Finite Element Approximation}\label{sec:discrete-setting}
Let $\omega \subset \R^2$ be a polygonal domain, and let $\{ \T_h \}_{h>0}$ be a family of triangulations of~$\omega$. For every $h>0$, we denote by~$T$ a generic element of~$\T_h$ and we set $h_T:=\mathrm{diam} (T)$, where $h = \max_{T \in \T_h} h_T$.
Furthermore, we denote by~${\cal V}_h$ the set of all the vertices of~$\T_{h}$ and define
$N_{h}\coloneq \#{\cal V}_{h}$.

The discretization is cast in the space
\begin{displaymath}
\UU_h \coloneq \bigl\{ u \in H^1(\om) \colon u\rvert_T \in \mathbb P_1(T),\, \text{for every } T\in \T_h \}\,,
\end{displaymath}
of piecewise continuous linear finite elements, whose Lagrangian basis is denoted by~$\{\xi_{l}\}_{l=1}^{N_{h}}$.
We assume that this basis satisfies the non-positivity condition
\begin{equation}\label{stiffness}
  \int_{\om}\nabla \xi_{l}^\top A \nabla \xi_{m} \,\di x \leq 0 \quad \forall\, l,m\in\{1,\dotsc,N_{h}\},\ l\neq m.
\end{equation}
For the particular choice $A=I$, with $I$ the identity matrix, this condition is satisfied when $\T_h$ is an acute-angle mesh, and it ensures a discrete maximum principle in~$\UU_h$ (see \cite{CiaRav1973,StrFix2008}), i.e., that the phase field takes values in $[0,1]$ along the evolution
(cf. Proposition~6.14 in \cite{AlmBelNeg2019}).
In the present context, the matrix~$A$ corresponds to a metric tensor of a Riemannian manifold multiplied by a positive function. Thus, by coordinate transformation, condition \eqref{stiffness} is fulfilled if the triangulation is acute in the Riemannian space.
Indeed, according to the notation of Section~\ref{sec:two-dimensional-model}, the tangential gradient is
  \begin{equation*}
    \nabla_\tau \hat u \coloneq \bigl(\nabla \tilde u - \scprod{\nabla \tilde u}{g^3} g^3 \bigr) \bigr\rvert_{S} \quad \forall \hat u\in C^1(\phi(\om)),
  \end{equation*}
  where $\tilde u$ is an extension of $\hat u$ to $\Phi(\Om_\rho)$, which is characterized by a thickness~$\rho$. Then, by coordinate transformation, \eqref{stiffness} is equivalent to
  \begin{equation*}
    \int_{\phi(\omega)} \nabla_\tau (\xi_l\circ \phi^{-1})^\top \nabla_\tau (\xi_m \circ \phi^{-1}) \, \dd x \leq 0 \quad \forall l,m\in\{ 1,\dotsc, N_h \},\ l \neq m.
  \end{equation*}

In general, the space $\UU_h$ is endowed with the norm on $H^1(\om)$. However, we also adopt the norm
\begin{equation*}
  \norm{v}_{\UU_h} = \biggl( \int_\om \bigabs{\Pi_h(v^2)} \, \dd x \biggr)^\half \quad \tforall v \in \UU_h \,,
\end{equation*}
where $\Pi_h$ denotes the Lagrangian interpolant associated with the space $\UU_h$.

We introduce now the discrete counterpart of the elastic energy~\eqref{eq:elastic_energy} and of the dissipation potential~\eqref{eq:dissipated_energy}: for every $u, v \in \UU_{h}$, $0\leq v\leq 1$, let
\begin{align*}
  \E_{h}(u,v) &\coloneq \half \int_\om b  \abs{u}^2 \,\dd x +  \frac{\mu}{2}\int_{\om} \bigl( \p_{h}(v^{2}) + \eta_\eps \bigr)\, \nabla u^\top A \nabla u \,\di x \,, \\
  \DD_{h}(v) &\coloneq\kappa \int_\om \bigg[\frac{1}{4\eps}  \Pi_h\bigl( (1 - v)^2 \bigr) \sqrt{a} +  \eps \nabla v^\top A \nabla v \bigg] \,\dd x \,,
\end{align*}
which leads to the definition of the discrete phase field energy \eqref{eq:F} by
\begin{equation*}
  \F_{h} (u,v) \coloneq \E_h (u,v) + \DD_h (v) \quad \text{for } u, v\in \UU_h,\ 0\leq v\leq 1\,.
\end{equation*}
It holds that $\F_h (u,v)$ is Fr\'echet differentiable with
\begin{align*}
  \partial_u \F_h (u,v)[\varphi] ={}& \displaystyle \int_\om b u \varphi \, \dd x + \mu \int_\om \bigl( \Pi_h (v^2) + \eta_\eps \bigr)  \nabla u^\top A \nabla \varphi \,\dd x \,,\\
  \partial_v \F_h (u,v)[\psi] ={}& \displaystyle \mu\int_\om \Pi_h (v \psi) \nabla u^\top A \nabla u \, \dd x\\
  &+ \kappa \displaystyle\int_{\om} \bigg[\frac{1}{2 \eps} \Pi_h \bigl( (v-1) \psi \bigr) \sqrt{a} + 2 \eps \,
  \nabla v^\top A \nabla \psi \bigg] \,\dd x \,,
\end{align*}
for all $u, v, \varphi, \psi \in \UU_h$.

\begin{remark}\label{rmrk:use-of-interpolation}
  In general, the energy functional~$\F$  is discretized via restriction to the finite element space, i.e., by setting $\F_h \coloneq \F\rvert_{ \UU_h \times \UU_h}$. Here, following \cite{AlmBel2019,ArtForMicPer2015}, we define~$\F_h$ using the operator~$\Pi_h$. This ensures that also the discrete phase field takes
values in $[0,1]$ (see Proposition~6.14 in \cite{AlmBelNeg2019}).
\end{remark}

\subsection{An Alternating Minimization Scheme}
In order to approximate a quasi-static fracture evolution, we adopt here the scheme used in \cite{ArtForMicPer2015,Bou2007a,BouFraMar2000,BurOrtSue2010}, which is based on an alternating minimization procedure.
For a given time interval, $[0,T_f]$, with $T_f>0$, we consider the time step $\tau = \frac{T_f}{k}$, where $k\in \N$ is the number of time steps, and we denote the time levels by $t_i \coloneq i \tau$ for $i\in \{0,\dotsc, k\}$. Let $g$ be the time dependent Dirichlet boundary condition for the displacement field, assumed to be an absolutely continuous function in $AC ([0, T_f]; W^{1,p}(\om))$, with $p>2$.
The adopted alternating minimization scheme works as follows:
Let $u_0, v_0 \in \UU_h$ the assigned initial values. Then, for every $i\in\{1, \ldots, k\}$ and every~$j \in \N$, we inductively set $u_{i,0} \coloneq u_{i-1}$, $v_{i,0} \coloneq v_{i-1}$ and
\begin{align}\label{eq:sim-minu}
  u_{i,j} &\coloneq \argmin \, \bigl\{ \E_h (u, v_{i,j-1}):  u\in \UU_h,  u= g (t_{i}) \text{ on } \partial\om \bigr\}\,, \\
  \label{eq:sim-minv}
  v_{i,j} &\coloneq \argmin \, \biggl\{\F_h (u_{i,j} ,v) + \frac{\alpha}{2\tau} \norm{v-v_{i-1}}^2_{\UU_h} : v\in \UU_h, v \le v_{i-1} \biggr\} \,,
\end{align}
where $\alpha > 0$ is a tuning parameter.
As shown in Proposition~\ref{prop:critical-point}, there exists a subsequence~$j_m$ such that~$(u_{i,j_m}, v_{i,j_m})$ admits a limit in~$\UU_{h}\times\UU_h$ as $m\to \infty$. Thus, we set
\begin{equation*}
 u_i \coloneq \lim_{m\to \infty} u_{i,j_m} \quad \text{and} \quad v_i \coloneq \lim_{m\to \infty} v_{i,j_m} \,.
\end{equation*}

The inequality constraint in \eqref{eq:sim-minv} enforces the irreversibility of the fracture.
In this way, the phase field is constrained to decrease in time to avoid any crack healing.
Moreover, the constraint $v\geq 0$ is no longer required, since the adopted discretization automatically guarantees
$v_{i,j} \geq 0$ (see also Remark~\ref{rmrk:use-of-interpolation}).

Following Theorems~4.3,~5.13,~5.17 in \cite{AlmBelNeg2019}, we can show that, in the time continuous limit, the algorithm~\eqref{eq:sim-minu}--\eqref{eq:sim-minv} detects a unilateral $L^2$-gradient flow for the functional~$\F_h$.
Moreover, we obtain full consistency when $h \to 0$, namely, a sequence of $L^2$-gradient flows
of~$\F_h$ converge to an $L^2$-gradient flow of~$\F$.

As for the additional parameter $\alpha$, we assume that it is very small, so that a gradient flow of~$\F_h$ is expected to be close to a quasi-static evolution along critical points (see \cite{KneRosZan2013,Neg2019}). The choice $\alpha = 0$, made in \cite{AlmBel2019}, in order to directly obtain a quasi-static evolution, does not ensure an energy balance when $h\to 0$.

  Since $u \mapsto \F_h (u,v)$ is a convex map, the minimization~\eqref{eq:sim-minu} is equivalent to
  \begin{equation}\label{eq:boh1}
    \partial_u \E_h (u_{i,j},v_{i,j-1})[\varphi] = 0 \quad \text{for every } \varphi \in \UU_h, \mbox{ with } \varphi = 0  \mbox{ on } \partial\om.
  \end{equation}
  The minimization \eqref{eq:sim-minv}, instead, is equivalent to the variational inequality (cf. Chapter~3 of \cite{KikOde1988})
  \begin{equation}\label{eq:crit-point-v-2}
    \partial_v \F_h(u_{i,j},v_{i,j}) [v_{i,j} - \psi] + \frac{\alpha}{\tau} \int_\om \Pi_h \bigl( (v_{i,j} - v_{i-1}) (v_{i,j} - \psi) \bigr) \, \dd x \leq 0
  \end{equation}
  for all $ \psi \in \VV_h$, with $\psi \leq v_{i-1}$.

These remarks justify the following definition of a critical point of $\F_h$, subject to the inequality constraint in \eqref{eq:sim-minv}.
\begin{definition}\label{def:discrete-crit-point}
  Let $u, v, \tilde v \in \VV_h$ with $0\leq \tilde v \leq 1$. We define $(u,v)$ as a \emph{discrete critical point with bound~$\tilde v$} if the following two conditions hold
  \begin{align}\label{eq:disc-crit-point-u}
    0 &= \partial_u \E_h (u,v)[\varphi]  \,,\\
    \label{eq:disc-crit-point-v}
    0 &\geq \partial_v \F_h(u,v) [v - \psi] + \frac{\alpha}{\tau} \int_\om \Pi_h \bigl( (v- \tilde v) (v - \psi) \bigr) \, \dd x  \,,
  \end{align}
  for all $\varphi, \psi \in \UU_h$ with $\varphi = 0$ on $\partial \om$ and $\psi \leq \tilde v$.
\end{definition}
Notice that, relations \eqref{eq:disc-crit-point-u}--\eqref{eq:disc-crit-point-v} are equivalent to the single inequality
\begin{equation*}
    \partial_u \E_h (u,v)[\varphi] + \partial_v \F_h(u,v) [v - \psi] + \frac{\alpha}{\tau} \int_\om \Pi_h   \bigl( (v- \tilde v) (v - \psi) \bigr) \, \dd x \leq 0,
  \end{equation*}
  for all $\varphi \in \UU_h$ with $\varphi = 0$ on $\partial \om$ and for all $\psi \in \VV_h$ with $\psi \leq \tilde v$.

We will also employ the continuous counterpart of Definition~\ref{def:discrete-crit-point}:
\begin{definition}\label{def:cont-crit-point}
  Let $u \in H^1(\om)$ and $v, \tilde v \in H^1(\om;[0,1])$. We define $(u,v)$ as a \emph{critical point with bound~$\tilde v$} if the following two conditions hold
  \begin{align*}
    &\partial_u \E (u,v)[\varphi] = 0 \quad \forall \varphi \in H^1(\om), \mbox{ with }\varphi = 0 \text{ on } \partial \om, \\
    &\partial_v \F (u,v) [v - \psi] + \frac{\alpha}{\tau} \int_\om (v- \tilde v) (v - \psi) \, \dd x \leq 0 \quad \forall \psi \in H^1(\om;[0,1]), \mbox{ with }\psi \leq \tilde v.
  \end{align*}
\end{definition}
Following the idea of Proposition~2 in \cite{BurOrtSue2011}, we show the convergence of the minimization scheme~\eqref{eq:sim-minu}--\eqref{eq:sim-minv} to a discrete critical point.
The result can easily be extended to a space-continuous scheme where $\UU_h$ is replaced by $H^1(\om)$ in~\eqref{eq:sim-minu} and~\eqref{eq:sim-minv}.

\begin{proposition}\label{prop:critical-point}
 Let $i\in\{1,\ldots, k\}$ and $(u_{i,j}, v_{i,j})$ be defined as in \eqref{eq:sim-minu}--\eqref{eq:sim-minv}. Then, $(u_{i,j} , v_{i,j})$ converges, up to a subsequence, as $j \to \infty$ to a discrete critical point~$(u_i, v_i) \in \VV_h \times \VV_h$ with bound~$v_{i-1}$.
\end{proposition}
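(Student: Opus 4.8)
The plan is to argue by compactness, exploiting monotonicity of the alternating minimization to extract convergent subsequences, and then passing to the limit in the Euler--Lagrange conditions \eqref{eq:boh1} and \eqref{eq:crit-point-v-2}. First I would observe that the functional values $\F_h(u_{i,j}, v_{i,j}) + \tfrac{\alpha}{2\tau}\norm{v_{i,j}-v_{i-1}}^2_{\UU_h}$ are nonincreasing in $j$: indeed, passing from $v_{i,j-1}$ to $u_{i,j}$ via \eqref{eq:sim-minu} does not increase $\E_h(\cdot, v_{i,j-1})$ hence does not increase $\F_h(\cdot, v_{i,j-1})$ (note $\DD_h$ is unchanged), and passing from $u_{i,j}$ to $v_{i,j}$ via \eqref{eq:sim-minv} does not increase $\F_h(u_{i,j},\cdot) + \tfrac{\alpha}{2\tau}\norm{\cdot - v_{i-1}}^2_{\UU_h}$ by definition of the minimizer and the fact that $v_{i,j-1}$ is admissible (since $v_{i,j-1}\le v_{i-1}$, which itself follows inductively). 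Hence the sequence of energies is bounded, which — together with coercivity of $\DD_h$ in the $H^1$-seminorm, the bound $0\le v_{i,j}\le 1$, and the Dirichlet condition $u_{i,j} = g(t_i)$ on $\partial\om$ giving control of $\norm{u_{i,j}}_{H^1(\om)}$ via the term $(\eta_\eps + \Pi_h(v_{i,j}^2))\nabla u^\top A\nabla u \ge \eta_\eps\,\nabla u^\top A\nabla u$ plus the $L^2$-term $b|u|^2$ and a Poincaré-type inequality — yields uniform bounds for $\{u_{i,j}\}$ and $\{v_{i,j}\}$ in the finite-dimensional space $\UU_h$.

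Since $\UU_h$ is finite-dimensional, bounded sequences have convergent subsequences, so there is $j_m\to\infty$ with $u_{i,j_m}\to u_i$ and $v_{i,j_m}\to v_i$ in $\UU_h$, and the convergence is automatically strong in every norm on $\UU_h$. Next I would show that consecutive iterates get close: since the energies decrease and are bounded below, the increments telescope, forcing $\F_h(u_{i,j},v_{i,j}) - \F_h(u_{i,j+1},v_{i,j}) \to 0$ and $\norm{v_{i,j} - v_{i,j+1}}_{\UU_h}\to 0$; using strict convexity of $u\mapsto\E_h(u,v)$ and of $v\mapsto \DD_h(v) + \tfrac{\alpha}{2\tau}\norm{\cdot - v_{i-1}}^2_{\UU_h}$ (recall $\DD_h$ contains the coercive gradient term), standard quantitative convexity estimates then give $\norm{u_{i,j} - u_{i,j+1}}_{\UU_h}\to 0$ as well. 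Consequently $u_{i,j_m - 1}\to u_i$ and $v_{i,j_m-1}\to v_i$ along the same subsequence, so in the limit the pair $(u_i, v_i)$ is a "fixed point" of the alternating scheme.

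It then remains to pass to the limit in the optimality conditions. The condition \eqref{eq:boh1} for $u_{i,j_m}$ against $v_{i,j_m-1}$ is an equality; since all maps involved are polynomial in the finitely many nodal values and $\Pi_h$ is continuous on $\UU_h$, letting $m\to\infty$ gives $\partial_u\E_h(u_i, v_i)[\varphi] = 0$ for all $\varphi\in\UU_h$ with $\varphi = 0$ on $\partial\om$, i.e.\ \eqref{eq:disc-crit-point-u}. For the phase field, the variational inequality \eqref{eq:crit-point-v-2} holds for $v_{i,j_m}$ with test functions $\psi\le v_{i-1}$; the left-hand side depends continuously on $(u_{i,j_m}, v_{i,j_m})$ (again polynomial dependence plus continuity of $\Pi_h$), the admissible set $\{\psi\in\UU_h : \psi\le v_{i-1}\}$ is fixed, and the inequality is preserved under the limit, yielding \eqref{eq:disc-crit-point-v} with bound $\tilde v = v_{i-1}$. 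Finally $u_i = g(t_i)$ on $\partial\om$ is stable under the convergence, and $0\le v_i\le 1$ passes to the limit; together with $v_i\le v_{i-1}$ this shows $(u_i,v_i)$ is a discrete critical point with bound $v_{i-1}$. The main obstacle is the second paragraph: establishing that the \emph{displacement} increments vanish (not just the phase-field increments, which come for free from the $L^2$-penalization), which requires carefully combining the telescoping energy decrease with the strict convexity of $\E_h(\cdot, v)$ — here one must ensure the convexity modulus does not degenerate, using $\eta_\eps > 0$ to keep the elastic bilinear form uniformly coercive regardless of $v$.
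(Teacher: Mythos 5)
Your proposal is correct, and the overall skeleton (energy monotonicity $\Rightarrow$ boundedness $\Rightarrow$ compactness in the finite-dimensional space $\UU_h$ $\Rightarrow$ passage to the limit in \eqref{eq:boh1} and \eqref{eq:crit-point-v-2}) matches the paper's. The genuine difference is how you resolve the index mismatch between $u_{i,j}$, which is coupled to $v_{i,j-1}$ in \eqref{eq:boh1}, and the extracted limits. The paper simply lets $v_{i,j_k-1}\to w$ with $w$ a priori different from $v_i$, passes to the limit to get $\partial_u\E_h(u_i,w)[\varphi]=0$ together with the variational inequality for $(u_i,v_i)$, and only afterwards identifies $w=v_i$ by comparing energies along the two interlaced subsequences and invoking uniqueness of the minimizer in \eqref{eq:sim-minv} (strict convexity). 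You instead prove the stronger statement that consecutive increments vanish along the \emph{whole} sequence, $\norm{v_{i,j}-v_{i,j+1}}\to 0$ and $\norm{u_{i,j}-u_{i,j+1}}\to 0$, by combining the telescoping energy decrease with quantitative strong convexity of each subproblem (the $L^2$-penalization for $v$, and $\eta_\eps>0$ plus the Dirichlet data and Poincar\'e for $u$); this forces $v_{i,j_m-1}$ and $v_{i,j_m}$ to share the limit, so no a posteriori identification is needed. Both routes ultimately lean on the same convexity of the $v$-subproblem — the paper qualitatively (uniqueness), you quantitatively (error bound from the energy gap) — and your version buys an extra asymptotic-regularity statement at the cost of a slightly longer argument. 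One shared cosmetic caveat: the first comparison $E_1\le E_0$ need not hold since $u_{i,0}=u_{i-1}$ carries the boundary datum $g(t_{i-1})$ rather than $g(t_i)$; monotonicity from $j=1$ onward is what one actually uses, and it suffices.
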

\begin{proof}
We have that, for all $j\in \N$
  \begin{align*}
    \F_h (u_{i,j}, v_{i,j}) + \frac{\alpha}{2\tau} \norm{v_{i,j} - v_{i-1}}_{\VV_h}^2
    &\leq \F_h (u_{i,j-1}, v_{i,j-1}) + \frac{\alpha}{2\tau} \norm{v_{i,j-1} - v_{i-1}}_{\VV_h}^2\\
    &\leq \F_h (u_{i,0} , v_{i,0}) + \frac{\alpha}{2\tau} \norm{v_{i,0} - v_{i-1}}_{\VV_h}^2 \,.
  \end{align*}

  Since~$A$ is uniformly positive definite, the sequence $(u_{i,j}, v_{i,j})$ is bounded in~$\UU_h \times \VV_h$. Hence, we can extract a subsequence $j_k$ such that, for some $u_i, v_i, w \in \VV_h$, we have
  \begin{equation}\label{eq:some_convergences}
    \nabla u_{i,j_k} \to \nabla u_i \,, \quad  v_{i,j_k} \to v_i\,, \quad v_{i,j_k -1} \to w \quad  \text{ as } k\to \infty.
  \end{equation}
  This also implies $u_{i,j_{k - 1}} \to u_i$ and $v_{i,j_{k-1}} \to v_i$ as $k \to \infty$.

  We now prove that $(u_i,v_i)$ is a discrete critical point. In view of \eqref{eq:boh1} and \eqref{eq:crit-point-v-2}, there holds for all $k \in \N$ and for all $\varphi, \psi \in \VV_h$ with $\varphi =0$ on $\partial \om$ and $\psi \leq v_{i-1}$
  \begin{equation*}
    \begin{split}
      0 &= \partial_u \E_h (u_{i,j_k},v_{i,j_k -1}) [\varphi] \,, \\
      0 &\leq \partial_v \F_h(u_{i,j_k},v_{i,j_k}) [\psi - v_{i,j_k}] + \frac{\alpha}{\tau} \int_\om \Pi_h \bigl( (v_{i,j_k} - v_{i-1}) (\psi - v_{i,j_k}) \bigr) \, \dd x \,.
    \end{split}
  \end{equation*}
  Passing to the limit for $k \to \infty$, it follows that
  \begin{equation}\label{eq:crit-point-proof}
   \begin{split}
    0 &= \partial_u \E_h (u_i, w) [\varphi]  \,, \\
    0 &\leq \partial_v \F_h(u_i, v_i) [\psi - v_i] + \frac{\alpha}{\tau} \int_\om \Pi_h \bigl( (v_i - v_{i-1}) (\psi - v_i) \bigr) \, \dd x \,.
    \end{split}
  \end{equation}
  We recall that the last inequality implies that~$v_{i}$ is a solution of~\eqref{eq:sim-minv} with displacement~$u_{i}$.

	It remains to show that $v_i = w$.
	By \eqref{eq:sim-minu}--\eqref{eq:sim-minv} and by the convergence result in~\eqref{eq:some_convergences}, we have that
  \begin{align*}
  \F_h (u_i, w) \! + \! \frac{\alpha}{2\tau} \norm{w - v_{i-1}}_{\UU_h}^2
    & = \lim_{k \to \infty} \F_h (u_{i,j_k}, v_{i,j_k-1}) \! + \! \frac{\alpha}{2\tau} \norm{v_{i,j_k-1} - v_{i-1}}_{\UU_h}^2 \\
    & \leq \lim_{k\to \infty} \F_h (u_{i,j_{k-1}}, v_{i,j_{k-1}}) + \frac{\alpha}{2\tau} \norm{v_{i,j_{k-1}} - v_{i-1}}_{\UU_h}^2 \\
    &=  \F_h (u_i, v_i) + \frac{\alpha}{2\tau} \norm{v_i - v_{i-1}}_{\UU_h}^2 \,.
  \end{align*}
  By strict convexity,~\eqref{eq:sim-minv} has a unique solution. Hence, $v_i = w$. Inequalities~\eqref{eq:crit-point-proof} imply that~$(u_i,v_i)$ is a discrete critical point with bound~$v_{i-1}$.
\end{proof}


\subsection{An Anisotropic a Posteriori Error Analysis}\label{sec:residual-estimate}
Goal of this section is to quantify the error associated with a computed discrete critical point through the minimization \eqref{eq:sim-minu}--\eqref{eq:sim-minv}. In particular, we exploit the benefits led by the
employment of an anistropically adapted mesh.
We adopt the setting in \cite{ForPer2001} to recover the anisotropic information,
and we consider a reference triangle $\hat T$, so that, for $T \in \T_h$, there exists an affine map
$\RR_T \colon \hat T \to T$,
with $\RR_T(\hat x) = M_T \hat x + \theta_T$ for all $\hat x \in \hat T$, where $M_T\in \R^{2\times 2}$ is invertible and $\theta_T \in \R^2$ is the shift vector.
We choose $\hat T$ as the equilateral triangle inscribed in the unit circle with one vertex at $(0,1)$. Hence, if $T \in \T_h$ has vertices $(x_1, y_1), (x_2, y_2), (x_3, y_3)$, we have
\begin{equation*}
  M_T = \frac{1}{3}
  \begin{pmatrix}
    \sqrt{3} (x_2 - x_1) & \quad 2 x_3 - x_1 - x_2 \\
    \sqrt{3} (y_2 - y_1) & \quad 2 y_3 - y_1 - y_2
  \end{pmatrix}
  \quad \text{and} \quad
   \theta_T = \frac13
  \begin{pmatrix}
    x_1 + x_2 + x_3\\
    y_1 + y_2 + y_3
  \end{pmatrix}.
\end{equation*}

\begin{figure}
  \centering
  \input{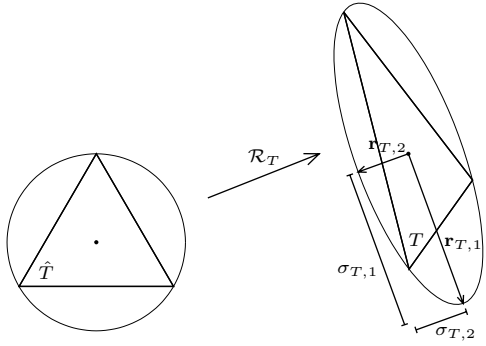}
  \caption{Geometric sketch of the affine map $\RR_T$, together with the main anisotropic quantities.}
  \label{fig:triangle-mapping}
\end{figure}

We consider the singular value decomposition $M_T = U_T \Sigma_T V_T^\top$, of the matrix $M_T$,
with $U_T = [\rr_{T,1}, \rr_{T,2}]$, $V_T \in \R^{2\times 2}$ orthogonal and $\Sigma_T \in \R^{2\times 2}$ diagonal with entries $\sigma_{T,1} \geq \sigma_{T,2} > 0$.
Hence, for every vector $z \in \R^n$ the following inequality holds
\begin{equation}
\label{svd-inequality}
\sigma_{T,2} \abs{z} \leq \abs{ M_T  z}\leq \sigma_{T,1} \abs{z}\,.
\end{equation}
Geometrically, the left singular vectors $\rr_{T,i}$ identify the directions of the semiaxes of the ellipse circumscribed to~$T$, while the singular values~$\sigma_{T,i}$ measure the corresponding lengths, with $i=1,2$.
The deformation of $T$ is quantified by the aspect ratio $s_T \coloneq \sigma_{T,1}/\sigma_{T,2} \geq 1$, where $s_T = 1$ for equilateral triangles. The matrices $U_T$ and $V_T$ apply rotations, whereas the matrix~$\Sigma_T$ deforms the element (see Figure~\ref{fig:triangle-mapping}).

We denote by $\hat u\big|_{\hat T} \coloneq u \circ \RR_T$ the pull-back on the reference triangle of a generic function $u\colon T \to \R$, and we set $\hat e \coloneq \RR^{-1}_T (e)$ for all $e \in E_h \cap T$, where
$E_h$ represents the skeleton of $\T_h$.
We recall here the anisotropic interpolation error estimates derived in \cite{ForPer2001,ForPer2003} for
the quasi-interpolant operator $Q_h$ as defined in \cite{Cle1975,ScoZha1990,Ver1999}.

\begin{lemma}\label{anisotropicInterpolation}
  Assume that $\#(\Delta_T) \leq {\mathcal N}$ and ${\rm diam}(\RR^{-1}_T(\Delta_T)) \leq C_\Delta$ for every $T \in \T_h$, with $\#(\cdot)$ and ${\rm diam}(\cdot)$
  the cardinality and the diameter of a given set, and $\Delta_T = \{\cup_{K \in \T_h} K \, : \, K \cap T \neq \emptyset\}$ the patch
  of elements associated with $T$. Then, for every $T\in \T_h$, every $e\in E_h$ with $e \in \partial T$, and every $u\in H^1(\Delta_T)$,
  there hold
  \begin{align*}
    |u - Q_h u|_{H^s(T)} &\leq C_s \frac{1}{\sigma_{2,T}^s}\, \norm{M_T^\top \nabla u}_{L^2(\Delta_T)}, \quad s = 0,1\\
    \norm{u - Q_h u}_{L^2(e)} &\leq C_2 \biggl(\frac{h_e}{\sigma_{T,1} \, \sigma_{T,2}}\biggr)^{1/2}\norm{ M_T^\top \nabla u}_{L^2(\Delta_T)},
  \end{align*}
  where $C_i = C_i({\hat T}, {\mathcal N}, C_\Delta)$ for $i=0,1,2$.
\end{lemma}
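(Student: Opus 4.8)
The plan is to factor the anisotropy of $T$ into the matrix $M_T$ and thereby reduce the three estimates to scale-free interpolation bounds on the fixed reference triangle $\hat T$; the singular value decomposition $M_T = U_T\Sigma_T V_T^\top$ then converts the matrix-dependent factors into the geometric quantities $\sigma_{T,1}$, $\sigma_{T,2}$, $h_e$ and $\det M_T = \sigma_{T,1}\sigma_{T,2}$ appearing in the statement. First I would record the change-of-variable identities: writing $\hat u := u\circ\RR_T$, so that $\nabla_{\hat x}\hat u = M_T^\top\,(\nabla u)\circ\RR_T$, one gets for any measurable $S\subseteq\Delta_T$ with $\hat S:=\RR_T^{-1}(S)$ the identity $\|\nabla_{\hat x}\hat u\|_{L^2(\hat S)}^2 = (\det M_T)^{-1}\,\|M_T^\top\nabla u\|_{L^2(S)}^2$ and, conversely, on $T$ and on the edge $e$,
\begin{gather*}
\|u-Q_h u\|_{L^2(T)}^2 = \det M_T\,\|\hat u-\widehat{Q_h u}\|_{L^2(\hat T)}^2, \qquad |u-Q_h u|_{H^1(T)}^2 \le \frac{\det M_T}{\sigma_{T,2}^2}\,|\hat u-\widehat{Q_h u}|_{H^1(\hat T)}^2, \\
\|u-Q_h u\|_{L^2(e)}^2 = \frac{h_e}{|\hat e|}\,\|\hat u-\widehat{Q_h u}\|_{L^2(\hat e)}^2,
\end{gather*}
where the middle inequality uses $\nabla_x(u-Q_h u) = M_T^{-\top}\nabla_{\hat x}(\hat u-\widehat{Q_h u})$ together with the fact that the spectral norm of $M_T^{-1}$ equals $\sigma_{T,2}^{-1}$, and where $|\hat e|$ is of order one since $\hat T$ is fixed.

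Next I would use that the quasi-interpolant $Q_h$ is affine-equivalent, so that $\widehat{Q_h u}\big|_{\hat T} = \hat Q_h\hat u$ with $\hat Q_h$ the Clément–Scott–Zhang operator built on the pulled-back patch $\hat\Delta_T:=\RR_T^{-1}(\Delta_T)$, and then invoke the classical scale-free estimate
\[
\|\hat u-\hat Q_h\hat u\|_{L^2(\hat T)} + |\hat u-\hat Q_h\hat u|_{H^1(\hat T)} + \|\hat u-\hat Q_h\hat u\|_{L^2(\hat e)} \le \hat C\,|\hat u|_{H^1(\hat\Delta_T)},
\]
which follows from a Bramble–Hilbert / Deny–Lions argument (exactness of $\hat Q_h$ on constants) together with a trace inequality on $\hat\Delta_T$. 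Chaining this into the change-of-variable identities above and substituting $|\hat u|_{H^1(\hat\Delta_T)}^2 = (\det M_T)^{-1}\|M_T^\top\nabla u\|_{L^2(\Delta_T)}^2$ and $\det M_T=\sigma_{T,1}\sigma_{T,2}$ yields exactly the three asserted bounds, with constants $C_i=C_i(\hat T,{\mathcal N},C_\Delta)$ inherited from $\hat C$.

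The main obstacle — and the reason for the two standing hypotheses $\#(\Delta_T)\le{\mathcal N}$ and $\mathrm{diam}(\RR_T^{-1}(\Delta_T))\le C_\Delta$ — is to make the reference estimate genuinely uniform in $T$: although $\hat T$ is fixed, the pulled-back patch $\hat\Delta_T$ varies with $T$ and could a priori degenerate, so one must control the Bramble–Hilbert and trace constants uniformly over the whole family of admissible reference patches, namely those of diameter at most $C_\Delta$, consisting of at most ${\mathcal N}$ triangles, and containing the fixed triangle $\hat T$. Establishing this uniform bound is precisely the content of the construction in \cite{ForPer2001,ForPer2003}; once it is available, the remainder is just the affine-scaling bookkeeping described above.
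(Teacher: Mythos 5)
The paper does not prove this lemma: it is recalled verbatim from \cite{ForPer2001,ForPer2003}, with only a citation. Your outline is the standard affine-scaling argument used in those references, and the bookkeeping is correct: the identities $\|M_T^\top\nabla u\|_{L^2(S)}^2=\det M_T\,|\hat u|^2_{H^1(\hat S)}$, $\|M_T^{-\top}\|_2=\sigma_{T,2}^{-1}$, $\det M_T=\sigma_{T,1}\sigma_{T,2}$ and the edge Jacobian $h_e/|\hat e|$ combine with the scale-free reference estimate to give exactly the three stated bounds with the right powers of $\sigma_{T,1},\sigma_{T,2}$. You also correctly locate the only genuinely delicate point, namely the uniformity of the Bramble--Hilbert and trace constants over the family of pulled-back patches $\RR_T^{-1}(\Delta_T)$, and correctly identify the two standing hypotheses as what makes that uniformity possible; since you defer that construction to \cite{ForPer2001,ForPer2003} exactly as the paper itself does, the proposal is consistent with the source and acceptable as a proof sketch.
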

We also provide the anisotropic error estimate associated with the Lagrangian interpolant $\Pi_h$ (for the
proof, see Proposition 3.3 in \cite{ArtForMicPer2015}), together with the equivalence result between the
standard $H^1(\Delta_T)$-seminorm and the corresponding anisotropic counterpart:
\begin{lemma}\label{anisotropicNodalInterpolation}
  Let $v,\psi \in \VV_h$ and $T\in \T_h$. Then, we have
  \begin{equation*}
    \bignorm{v \psi - \Pi_h (v \psi)}_{L^2 (T)} \leq \hat C h_{T}^2
    \abs{v}_{W^{1,\infty} (T)} \bignorm{\nabla \psi}_{L^2 (T)}\,,
  \end{equation*}
where $\hat C=\hat C(\hat T)$.
\end{lemma}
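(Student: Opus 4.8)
The plan is to reduce the estimate to an elementary pointwise Taylor expansion, exploiting the fact that $v$ and $\psi$ are affine on each element, so that the product $v\psi$ is a genuine quadratic polynomial and its nodal interpolation error can be controlled by hand.

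First I would fix $T\in\T_h$ with vertices $P_1,P_2,P_3$ and set $w\coloneq(v\psi)|_T$. Since $v|_T,\psi|_T\in\mathbb P_1(T)$, the function $w$ belongs to $\mathbb P_2(T)$ and has constant, explicit Hessian
\[
  D^2 w = \nabla v\otimes\nabla\psi + \nabla\psi\otimes\nabla v ,
\]
so that, by Cauchy--Schwarz, $\abs{D^2 w}\le 2\abs{\nabla v}\,\abs{\nabla\psi}$ in the Frobenius (hence also the spectral) norm. Because $v|_T$ and $\psi|_T$ are affine, both gradients are constant on $T$, whence $\abs{\nabla v}=\abs{v}_{W^{1,\infty}(T)}$ and $\abs{\nabla\psi}\,\abs{T}^{1/2}=\norm{\nabla\psi}_{L^2(T)}$.

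Next I would estimate the nodal interpolation error pointwise. For $x\in T$ with barycentric coordinates $\lambda_i(x)\ge 0$, $\sum_i\lambda_i(x)=1$, $\sum_i\lambda_i(x)P_i=x$, one has $\Pi_h w(x)=\sum_i\lambda_i(x)w(P_i)$; expanding $w$ about $x$ (exact, since $w$ is quadratic) and using $\sum_i\lambda_i(P_i-x)=0$ yields
\[
  w(x)-\Pi_h w(x) = -\tfrac12\sum_i\lambda_i(x)\,(P_i-x)^\top D^2 w\,(P_i-x).
\]
Bounding $\abs{P_i-x}\le\diam(T)=h_T$ and using $\sum_i\lambda_i=1$ gives the uniform estimate $\norm{w-\Pi_h w}_{L^\infty(T)}\le h_T^2\abs{\nabla v}\,\abs{\nabla\psi}$. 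Integrating over $T$ and inserting the identities from the previous step,
\[
  \norm{w-\Pi_h w}_{L^2(T)} \le \abs{T}^{1/2}\,h_T^2\,\abs{\nabla v}\,\abs{\nabla\psi}
  = h_T^2\,\abs{v}_{W^{1,\infty}(T)}\,\norm{\nabla\psi}_{L^2(T)},
\]
which is the assertion, in fact with an absolute constant. The slightly weaker dependence $\hat C=\hat C(\hat T)$ recorded in the statement is what one obtains instead by the alternative route of pulling $w$ back to the reference element $\hat T$ through $\RR_T$, applying a Bramble--Hilbert estimate for the nodal interpolant on the fixed element $\hat T$, and scaling back using $\sigma_{T,1}\le C(\hat T)h_T$.

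I do not expect a genuine obstacle in this lemma; the only delicate point is that the constant must stay bounded as the aspect ratio $s_T$ of $T$ grows, and this is automatic in the argument above, which uses only the diameter $h_T$ (through $\abs{P_i-x}\le h_T$) and the partition-of-unity property $\sum_i\lambda_i=1$, never the shape of $T$ --- in contrast with the anisotropic seminorm estimates of Lemma~\ref{anisotropicInterpolation}, where the matrices $M_T$ genuinely enter.
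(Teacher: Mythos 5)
Your proof is correct. Note first that the paper does not actually prove this lemma: it simply refers to Proposition~3.3 of the cited reference \cite{ArtForMicPer2015}, so there is no in-text argument to compare against. Your self-contained derivation is sound: since $v|_T,\psi|_T\in\mathbb P_1(T)$, the product $w=v\psi$ is quadratic on $T$ with constant Hessian $\nabla v\otimes\nabla\psi+\nabla\psi\otimes\nabla v$, the barycentric identity
\begin{equation*}
  w(x)-\Pi_h w(x)=-\tfrac12\sum_i\lambda_i(x)\,(P_i-x)^\top D^2w\,(P_i-x)
\end{equation*}
is exact (it follows from $\sum_i\lambda_i=1$ and $\sum_i\lambda_i(P_i-x)=0$), and the bounds $\abs{P_i-x}\le h_T$, $\abs{D^2w}\le 2\abs{\nabla v}\abs{\nabla\psi}$ give the $L^\infty$ estimate, after which $\abs{T}^{1/2}\abs{\nabla\psi}=\norm{\nabla\psi}_{L^2(T)}$ converts it into the stated $L^2$ bound. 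Your argument even sharpens the statement, yielding an absolute constant (up to the convention for the $W^{1,\infty}$ seminorm) rather than one depending on $\hat T$, and — as you correctly emphasize — it is genuinely aspect-ratio independent, which is the property that matters in the anisotropic setting of Theorem~\ref{thm:residual-estimate}. The only cosmetic caveat is that $\abs{v}_{W^{1,\infty}(T)}$ may denote the maximum of the partial derivatives rather than the Euclidean norm of the gradient, which costs at most a fixed dimensional factor absorbed into $\hat C$.
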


From \eqref{svd-inequality} we directly infer the following Lemma:
\begin{lemma}\label{equivalence}
Let $z \in H^1(\om)$ and $T \in \T_h$. Then, we have
\begin{equation*}
  \sigma_{T,2} \leq \frac{\norm{ M_T^\top \nabla z}_{L^2(\Delta_T)}}{\norm{\nabla z}_{L^2(\Delta_T)}} \leq \sigma_{T,1} \,.
\end{equation*}
\end{lemma}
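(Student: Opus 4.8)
The plan is to derive the claim directly from the pointwise inequality \eqref{svd-inequality}, integrated over the patch $\Delta_T$. First I would observe that the singular value decomposition $M_T = U_T \Sigma_T V_T^\top$ gives $M_T^\top = V_T \Sigma_T U_T^\top$, so $M_T^\top$ has exactly the same singular values $\sigma_{T,1} \geq \sigma_{T,2} > 0$ as $M_T$. Consequently, \eqref{svd-inequality} applies verbatim with $M_T$ replaced by $M_T^\top$: for every $\zeta \in \R^2$ one has $\sigma_{T,2} \abs{\zeta} \leq \abs{M_T^\top \zeta} \leq \sigma_{T,1} \abs{\zeta}$, since $U_T$ and $V_T$ are orthogonal and hence norm-preserving, while $\Sigma_T$ scales each coordinate by a factor between $\sigma_{T,2}$ and $\sigma_{T,1}$.

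Next I would apply this pointwise to $\zeta = \nabla z(x)$ for almost every $x \in \Delta_T$, obtaining $\sigma_{T,2}^2 \abs{\nabla z(x)}^2 \leq \abs{M_T^\top \nabla z(x)}^2 \leq \sigma_{T,1}^2 \abs{\nabla z(x)}^2$. Integrating over $\Delta_T$ and using that $\sigma_{T,1}, \sigma_{T,2}$ are constants on the element, this yields
\[
  \sigma_{T,2}^2 \norm{\nabla z}_{L^2(\Delta_T)}^2 \leq \norm{M_T^\top \nabla z}_{L^2(\Delta_T)}^2 \leq \sigma_{T,1}^2 \norm{\nabla z}_{L^2(\Delta_T)}^2.
\]
Taking square roots and, in the case $\norm{\nabla z}_{L^2(\Delta_T)} > 0$, dividing through by $\norm{\nabla z}_{L^2(\Delta_T)}$ gives the asserted double inequality; when $\norm{\nabla z}_{L^2(\Delta_T)} = 0$ the quotient is interpreted as degenerate and nothing is to prove.

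There is essentially no genuine obstacle here; the only minor points worth flagging are the passage from $M_T$ to $M_T^\top$ (handled by the symmetry of the SVD under transposition) and the harmless degenerate case of a locally constant $z$. The lemma is stated mainly to have the two-sided equivalence between the Euclidean $H^1(\Delta_T)$-seminorm and its anisotropically weighted counterpart recorded for later use in the a posteriori analysis.
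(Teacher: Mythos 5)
Your argument is correct and is exactly the one the paper intends: the lemma is stated as an immediate consequence of \eqref{svd-inequality}, applied pointwise to $\nabla z$ and integrated over $\Delta_T$, with the passage from $M_T$ to $M_T^\top$ justified by the invariance of singular values under transposition. Your handling of the degenerate case $\norm{\nabla z}_{L^2(\Delta_T)}=0$ is a harmless extra precaution.
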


Finally, we introduce the notation for the jump of the conormal derivative of a function $w \in \VV_h$:
\begin{equation*}
  \llbracket A \nabla w \rrbracket \coloneq \left\{
    \begin{aligned}
      &\bigabs{ \bigl( \nabla w \vert_{T} - \nabla w \vert_{T'} \bigr)^\top  A \nu_T \, } && \text{on } e\in E_{h} \text{ if }
       \exists T, T' \in \T_h \colon T \cap T' = e \\
      &2 \Bigabs{ \nabla w \vert_{T}^\top\,  A \nu_T  \, } && \text{on } e \in E_h \text{ if }
         \exists T \in \T_h
       \colon  e \subset \partial\om \cap \partial T \,,
    \end{aligned} \right.
\end{equation*}
with $\nu_T$ the unit outward normal vector to $T$. Moreover, we define the edge length function
$h_{\partial T} \colon \partial T \to \R$ by $h_{\partial T} = h_e$ for $e\in E_h \cap \partial T$.
\begin{theorem}\label{thm:residual-estimate}
  Let $(u_h,v_h) \in \UU_h \times \VV_h$ be a  discrete  critical point with bound $\tilde v_h \in \VV_h$.
  For every $T\in\T_h$, we define the quantities
  \begin{align*}
    \gamma_T(u_h,v_h)
    &\coloneq{} \displaystyle \norm{p(u_h,v_h) }_{L^2(T)} + \frac{\mu}{\sigma_{T,2}}
      \bignorm{ \bigl( v_h^2 - \Pi_h(v_h^2) \bigr)  A \nabla u_h}_{L^2(T)} \\
    & \qquad + \frac{\mu}{2 \sqrt{\sigma_{T,1} \sigma_{T,2}}} \bignorm{\sqrt{h_{\partial T}} \bigl( v_h^2 + \eta_\eps \bigr)
      \llbracket A \nabla u_h \rrbracket }_{L^2(\partial T)}\,,\\
    \vphantom{\frac12} p(u_h,v_h)
    &\coloneq b u_h - 2 \mu v_h \nabla u_h^\top A \nabla v_h
      - \mu (v_h^2 + \eta_\eps ) \nabla u_h \cdot \diver(A) \,,\\
    \rho_T(u_h, v_h)
    & \coloneq{} \norm{q(u_h,v_h)}_{L^2(T)} + \frac{\kappa \eps}{\sqrt{\sigma_{T,1} \, \sigma_{T,2}}}  \bignorm{\sqrt{h_{\partial T}}  \llbracket A \nabla v_h \rrbracket }_{L^2(\partial T)} \\
    & \qquad + \frac{h_T^2}{\sigma_{T,2}} \biggnorm{\mu \nabla u_h^\top A \nabla u_h + \frac{\kappa}{2\eps}  \sqrt{a}}_{L^2 (T)} \abs{v_h}_{W^{1,\infty} (T)} \\
    & \qquad + \frac{\alpha \, h_T^2}{\tau \sigma_{T,2}} \norm{\nabla (v_h - \tilde v_h)}_{L^2 (T)}\,,\\
    \vphantom{\frac12} q(u_h,v_h)
    & \coloneq \mu v_h  \nabla u_h^\top A \nabla u_h + \frac{\kappa}{2 \eps} (v_h-1) \sqrt{a}
      - 2 \kappa \eps \nabla v_h \cdot \diver(A) + \frac{\alpha}{\tau} (v_h - \tilde v_h)  \,.
  \end{align*}
  Then, we have
  \begin{equation}
    \begin{aligned}
      \label{classicalResidualEstimateU}
      \bigabs{\partial_u \E (u_h,v_h)[\varphi]}
      \leq{}&  C \sum_{T \in \T_h} \gamma_T(u_h,v_h)  \norm{M_T^\top \nabla \varphi}_{L^2(\Delta_T)}\quad \forall \varphi \in H^1_0 (\om)\,,
    \end{aligned}
  \end{equation}
  and
  \begin{align}
  \label{classicalResidualEstimateV}
    \partial_v \F (u_h,v_h) & [v_h - \psi] + \frac{\alpha}{\tau} \int_\om  (v_h - \tilde v_h ) (v_h - \psi)
     \,\dd x \\
   &  \leq  C \sum_{T\in \T_h} \rho_T(u_h,v_h) \bignorm{M_T^\top \nabla (\psi - v_h)}_{L^2 (\Delta_T)} \nonumber
  \end{align}
  for all $\psi \in H^1 (\Om)$ with $\psi \leq \tilde v_h $.
 \end{theorem}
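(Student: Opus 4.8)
The plan is to establish both estimates by a residual argument of the kind standard in the a posteriori analysis of phase-field fracture, taking care of three specific features of our setting: the mass-lumping operator $\Pi_h$ in the definition of $\F_h$, the anisotropic interpolation theory of Lemmas~\ref{anisotropicInterpolation}--\ref{equivalence}, and the obstacle constraint encoded in the discrete variational inequality~\eqref{eq:disc-crit-point-v}.

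For the displacement bound, I fix $\varphi\in H^1_0(\om)$ and take $\varphi_h\in\UU_h$ to be a boundary-preserving Clément-type quasi-interpolant of $\varphi$, so that $\varphi_h=0$ on $\partial\om$ and Lemma~\ref{anisotropicInterpolation} applies with patch norms on $\Delta_T$ on the right. Since $(u_h,v_h)$ is a discrete critical point, $\partial_u\E_h(u_h,v_h)[\varphi_h]=0$ by~\eqref{eq:disc-crit-point-u}, whence
\begin{equation*}
  \partial_u\E(u_h,v_h)[\varphi]=\partial_u\E(u_h,v_h)[\varphi-\varphi_h]+\mu\int_\om\bigl(v_h^2-\Pi_h(v_h^2)\bigr)(A\nabla u_h)\cdot\nabla\varphi_h\,\dd x .
\end{equation*}
I integrate the first term by parts element by element: since $u_h,v_h$ are affine on each $T\in\T_h$, there $\diver(A\nabla u_h)=\nabla u_h\cdot\diver(A)$ and $\nabla(v_h^2)=2v_h\nabla v_h$, so the interior residual is exactly $p(u_h,v_h)$, while the interface terms reduce to the weighted conormal jumps $(v_h^2+\eta_\eps)\llbracket A\nabla u_h\rrbracket$ because $v_h^2+\eta_\eps$ is continuous across edges. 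I bound $\|\varphi-\varphi_h\|_{L^2(T)}$ and $\|\varphi-\varphi_h\|_{L^2(e)}$ by the $s=0$ and the trace estimate in Lemma~\ref{anisotropicInterpolation}, and $\|\nabla\varphi_h\|_{L^2(T)}$ by $H^1$-stability of the quasi-interpolant together with Lemma~\ref{equivalence}; collecting the three contributions on each element reproduces $\gamma_T(u_h,v_h)\,\|M_T^\top\nabla\varphi\|_{L^2(\Delta_T)}$, and summing over $T$ (the patches $\Delta_T$ overlap boundedly, as $\#\Delta_T\le\NN$) gives~\eqref{classicalResidualEstimateU}.

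For the phase-field bound, the discrete relation~\eqref{eq:disc-crit-point-v} is only an inequality, so I first build an admissible discrete competitor: given $\psi\in H^1(\om)$ with $\psi\le\tilde v_h$, set $\psi_h\coloneq v_h+\widetilde{Q}_h(\psi-v_h)\in\UU_h$, where $\widetilde{Q}_h$ is an order-preserving variant of the Clément quasi-interpolant, obtained by capping nodal values so that $\psi_h\le\tilde v_h$ (possible since $\psi-v_h\le\tilde v_h-v_h$). Writing $R$, $R_h$ for the continuous and discrete residual functionals on the left-hand sides of~\eqref{classicalResidualEstimateV} and~\eqref{eq:disc-crit-point-v}, admissibility of $\psi_h$ gives $R_h(v_h-\psi_h)\le 0$, hence
\begin{equation*}
  R(v_h-\psi)=R(\psi_h-\psi)+R_h(v_h-\psi_h)+(R-R_h)(v_h-\psi_h)\le R(\psi_h-\psi)+(R-R_h)(v_h-\psi_h).
\end{equation*}
In $R(\psi_h-\psi)$, the function $\psi_h-\psi$ is (up to the capping correction) the quasi-interpolation error of $\psi-v_h$; an element-wise integration by parts of the term involving $\nabla v_h$, together with the collection of the zeroth-order contributions (including $\tfrac{\alpha}{\tau}(v_h-\tilde v_h)$), produces the interior residual $q(u_h,v_h)$ and the conormal jumps $\llbracket A\nabla v_h\rrbracket$ (with the boundary convention on $e\subset\partial\om$, since $\psi$ carries no boundary datum), which Lemma~\ref{anisotropicInterpolation} controls against $\|M_T^\top\nabla(\psi-v_h)\|_{L^2(\Delta_T)}$; this gives the first two terms of $\rho_T$. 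In $(R-R_h)(v_h-\psi_h)$ only mass-lumping defects survive (the contributions involving $\nabla v_h^\top A\nabla(\cdot)$ cancel); since $\eta\coloneq v_h-\psi_h\in\UU_h$ gives $\Pi_h\eta=\eta$, the defects $v_h\eta-\Pi_h(v_h\eta)$ and $(v_h-1)\eta-\Pi_h((v_h-1)\eta)$ coincide and can be paired with $(v_h-\tilde v_h)\eta-\Pi_h((v_h-\tilde v_h)\eta)$; applying Lemma~\ref{anisotropicNodalInterpolation} (choosing the two factors of each product appropriately) and Lemma~\ref{equivalence} to convert $\|\nabla\eta\|_{L^2(T)}$ into $\sigma_{T,2}^{-1}\|M_T^\top\nabla(\psi-v_h)\|_{L^2(\Delta_T)}$ yields the last two terms of $\rho_T$. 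Summing over $T$ with bounded patch overlap gives~\eqref{classicalResidualEstimateV}.

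I expect the delicate step to be the construction of $\widetilde{Q}_h$ and the verification that capping does not degrade the anisotropic approximation estimates, i.e.\ that $\psi_h-\psi$ still satisfies the $L^2(T)$- and $L^2(e)$-bounds of Lemma~\ref{anisotropicInterpolation} and $\nabla(v_h-\psi_h)$ the bound of Lemma~\ref{equivalence}, even on elements where a nodal value was lowered to $\tilde v_h(x_l)$. This is handled as in Chen--Nochetto-type arguments: since $\psi-v_h\le\tilde v_h-v_h$ pointwise and $\tilde v_h-v_h$ is piecewise affine, on a capped element the pointwise defect is dominated by the local oscillation of $\psi-v_h$, which is in turn estimated by $\|M_T^\top\nabla(\psi-v_h)\|_{L^2(\Delta_T)}$. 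Everything else---the element-wise integration by parts, the identification of $p$, $q$ and the jump terms, the treatment of the $\Pi_h$-defects, and the final summation via $\#\Delta_T\le\NN$---is routine; a minor secondary point is the use of a boundary-preserving quasi-interpolant in the displacement bound so that $\varphi_h$ is admissible in~\eqref{eq:disc-crit-point-u}.
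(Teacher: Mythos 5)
Your proposal is correct and follows essentially the same route as the paper's proof: the same splitting of $\partial_u \E (u_h,v_h)[\varphi]$ into the interpolation error part $\partial_u \E(u_h,v_h)[\varphi-\varphi_h]$ (integrated by parts element-wise to produce $p(u_h,v_h)$ and the conormal jumps) plus the consistency part $\partial_u \E(u_h,v_h)[\varphi_h]-\partial_u \E_h(u_h,v_h)[\varphi_h]$ with $\varphi_h=Q_h\varphi$; the same use of the discrete variational inequality~\eqref{eq:disc-crit-point-v} to discard the nonpositive discrete residual of $v_h-\psi_h$; the same identification of $q(u_h,v_h)$ and $\llbracket A\nabla v_h\rrbracket$ in the interpolation-error term; and the same pairing of the mass-lumping defects (using $\Pi_h\eta=\eta$ for $\eta\in\UU_h$) treated via Lemmas~\ref{anisotropicNodalInterpolation} and~\ref{equivalence}.

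The one place you genuinely deviate is the choice of the discrete competitor in the variational inequality. The paper simply sets $\psi_h=Q_h\psi$ and does not verify the admissibility condition $\psi_h\le\tilde v_h$, even though a Cl\'ement-type local average of a function lying below a piecewise affine obstacle need not lie below that obstacle at the nodes. You instead build a capped, order-preserving quasi-interpolant $\widetilde Q_h$, which makes admissibility immediate, and you correctly identify the resulting obligation: one must check that capping does not degrade the anisotropic estimates of Lemma~\ref{anisotropicInterpolation} for $\psi-\psi_h$ and the bound on $\nabla(v_h-\psi_h)$. Your Chen--Nochetto-style oscillation argument is the right tool for this and, if carried out, closes a gap that the paper's written proof leaves implicit; this is a refinement of the same method rather than a different one, and nothing else in your argument is missing.
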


\begin{proof}
  The linearity of $\varphi \mapsto \partial_u \E (u_h,v_h)[\varphi]$ yields
  \begin{equation} \label{classicalPartialEstimateU}
    \bigabs{\partial_u \E(u_h,v_h)[\varphi]} \leq \bigabs{\partial_u \E(u_h,v_h)[\varphi - \varphi_h]} +
    \bigabs{\partial_u \E(u_h,v_h)[\varphi_h]} \,.
  \end{equation}
  We consider the first term on the right-hand side. Using the divergence theorem and the fact that every second derivative of $u_h|_T$ is zero, we have
  \begin{multline*}
    \partial_u \E(u_h,v_h)[\varphi - \varphi_h]\\
    \begin{aligned}
    ={}& \sum_{T \in \T_h} \biggl\{ \int_{T} b u_h (\varphi-\varphi_h) \,\dd x \\
    &- \mu \int_{T} \Bigl( 2 v_h \nabla u_h^\top A \nabla v_h + \bigl(v_h^2 + \eta_\eps \bigr) \nabla u_h^\top \cdot \diver(A) \Bigr)
    (\varphi - \varphi_h) \,\dd x \\
    &+ \mu \int_{\partial T} \bigl(v_h^2 + \eta_\eps \bigr) \nabla u_h^\top A \, \nu_T \, (\varphi- \varphi_h)  \,\dd x \biggr\}\\
   ={}& \sum_{T \in \T_h} \biggl\{ \int_{T} p(u_h,v_h) (\varphi - \varphi_h) \,\dd x + \frac{\mu}{2}  \int_{\partial T} \bigl(v_h^2 + \eta_\eps \bigr)
    \llbracket A \nabla u \rrbracket \, (\varphi- \varphi_h) \,\dd x \biggr\} \,.
    \end{aligned}
  \end{multline*}
  Hence, by the Cauchy-Schwarz inequality
  \begin{align*}
    \bigabs{\partial_u \E(u_h,v_h)[\varphi - \varphi_h]} \leq & \sum_{T \in \T_h} \Bigl\{ \bignorm{p(u_h,v_h)}_{L^2(T)}
    \bignorm{\varphi - \varphi_h}_{L^2(T)} \\
    & + \frac{\mu}{2} \bignorm{\bigl(v_h^2 + \eta_\eps\bigr)\llbracket A \nabla u_h \rrbracket }_{L^2(\partial T)}
    \norm{\varphi- \varphi_h}_{L^2(\partial T)} \Bigr\}\,.
  \end{align*}
  We now select $\varphi_h \coloneq Q_h \varphi$. By Lemma~\ref{anisotropicInterpolation}, we can estimate
  \begin{align}\label{term11}
    \bigabs{\partial_u \E_h & (u_h,v_h)[\varphi - \varphi_h]}
    \leq C_3 \sum_{T \in \T_h} \biggl( \bignorm{p(u_h,v_h) }_{L^2(T)} \\
    & + \frac{\mu}{2 \sqrt{\sigma_{T,1} \sigma_{T,2}}} \Bignorm{\sqrt{h_{\partial T}} \bigl( v_h^2 + \eta_\eps \bigr)
    \llbracket A \nabla u_h \rrbracket }_{L^2(\partial T)}\biggr) \,  \bignorm{M_T^\top \nabla \varphi}_{L^2(\Delta_T)} \,, \nonumber
  \end{align}
  where $C_3 \coloneq \max \{C_0,C_2\}$.

We now deal with the second contribution on the right-hand side of \eqref{classicalPartialEstimateU}. Using~\eqref{eq:disc-crit-point-u}, Lemmas~\ref{anisotropicInterpolation} and Lemma~\ref{equivalence}, and the fact that~$Q_h$ preserves the boundary values, we obtain
  \begin{align*}
    & \vphantom{\sum_{T \in \T} } \bigabs{\partial_u \E (u_h,v_h)[\varphi_h]}  = \bigabs{\partial_u \E (u_h,v_h)[\varphi_h] - \partial_u \E_h(u_h,v_h)[\varphi_h]} \\
    & \qquad \leq \sum_{T\in \T_h} \Bignorm{ \mu \bigl( v_h^2 - \Pi_h(v_h^2) \bigr) A \nabla u_h}_{L^2(T)}\, \Bigl( \norm{\nabla \varphi -
    \nabla \varphi_h}_{L^2(T)} + \norm{\nabla \varphi}_{L^2(T)}\Bigr) \\
    & \qquad \leq C_4 \sum_{T\in \T_h} \frac{\mu}{\sigma_{T,2}} \, \Bignorm{ \bigl( v_h^2 - \Pi_h(v_h^2) \bigr)  A \nabla u_h}_{L^2(T)}
    \, \norm{M_T^\top \nabla \varphi}_{L^2(\Delta_T)} \,,
  \end{align*}
  with $C_4 \coloneq 1 + C_1$.
  This last estimate, combined with~\eqref{term11}, provides estimate~\eqref{classicalResidualEstimateU}.

  Let us now deal with \eqref{classicalResidualEstimateV}. By~\eqref{eq:disc-crit-point-v},
  for every $\psi \in H^1(\om)$ and every $\psi_h \in \VV_h$ with $0\leq \psi, \psi_h \leq \tilde v_h$ we have
  \begin{align}\label{eq:estimate-v}
    \partial_v \F & (u_h,v_h)[v_h - \psi] + \frac{\alpha}{\tau} \int_\om  (v_h - \tilde v_h ) (v_h - \psi)  \,\dd x \\
    & \leq \partial_v \F (u_h,v_h)[v_h - \psi] + \frac{\alpha}{\tau} \int_\om  (v_h - \tilde v_h ) (v_h - \psi) \,\dd x \nonumber\\
    &\qquad - \partial_v \F_h(u_h,v_h)[v_h - \psi_h] - \frac{\alpha}{\tau} \int_\om  \Pi_h \bigl( (v_h - \tilde v_h )
    (v_h - \psi_h)  \bigr) \,\dd x \nonumber \\
    & \leq \underbrace{\partial_v \F(u_h,v_h)[\psi_h - \psi] + \frac{\alpha}{\tau} \int_\om  (v_h - \tilde v_h )
    (\psi_h - \psi) \,\dd x}_{{\text{(I)}}} \nonumber \\
    &\qquad + \underbrace{\partial_v \F(u_h,v_h)[v_h - \psi_h] - \partial_v \F_h(u_h,v_h)[v_h - \psi_h]}_{{\text{(II)}}} \nonumber \\
    &\qquad + \underbrace{\frac{\alpha}{\tau} \int_\om  (v_h - \tilde v_h ) (v_h - \psi_h) \,\dd x  - \frac{\alpha}{\tau} \int_\om  \Pi_h
    \bigl( (v_h - \tilde v_h ) (v_h - \psi_h)  \bigr) \,\dd x}_{{\text{(III)}}} \,,\nonumber
  \end{align}
  where, in the second inequality, we have added and subtracted the terms
$\partial_v \F(u_h,v_h)[\psi_h]$ and
  $\frac{\alpha}{\tau} \int_\om (v_h -\tilde v_h) \psi_h \,\dd x$.

  We consider the term (I). After integrating by parts on each element $T\in\T_h$, we obtain
  \begin{multline*}
    \partial_v \F (u_h,v_h) [\psi_h - \psi] + \frac{\alpha}{\tau} \int_\om  (v_h - \tilde v_h) (\psi_h - \psi)  \,\dd x\\
    \begin{aligned}
    = \sum_{T\in \T_h} \biggl\{&\mu \int_{T} v_h (\psi_h - \psi) \nabla u_h^\top A \nabla u_h \,\dd x + \frac{\kappa}{2\eps}
    \int_{T} (v_h-1) (\psi_h - \psi) \sqrt{a} \, \dd x  \\
    &- 2 \kappa \eps \int_{T} \nabla v_h \cdot \diver(A)(\psi_h - \psi) \,\dd x  + \kappa \eps \int_{\partial T}
    \llbracket A \nabla v_h \rrbracket (\psi_h - \psi) \,\dd x \\
    &+ \frac{\alpha}{\tau} \int_T  (v_h - \tilde v_h) (\psi_h - \psi)  \,\dd x \biggr\} \,,
    \end{aligned}
  \end{multline*}
  which can be bounded by the Cauchy-Schwarz inequality as
  \begin{align}
  \label{eq:estimate-2}
  &  \partial_v \F (u_h,v_h)  [\psi_h - \psi]  + \frac{\alpha}{\tau} \int_\om  (v_h - \tilde v_h)
    (\psi_h - \psi)  \,\dd x \\
   & \leq \sum_{T\in \T_h} \norm{q(u_h, v_h)}_{L^2(T)}\norm{\psi_h - \psi}_{L^2(T)}
    + \kappa \eps \sum_{T\in \T_h} \bignorm{ \llbracket A \nabla v_h \rrbracket}_{L^2(\partial T)}
   \bignorm{\psi_h - \psi}_{L^2(\partial T)} \,.\nonumber
  \end{align}
  We then choose $\psi_h = Q_h\psi$ and notice that $Q_h(\psi -v_h) = \psi_h - v_h$ and
  $\psi - \psi_h = \psi - v_h - Q_h (\psi - v_h)$.
  This choice, together with Lemma~\ref{anisotropicInterpolation}, allows us to rewrite~\eqref{eq:estimate-2} as
  \begin{align}\label{eq:estimate-v-1}
    \partial_v \F_h & (u_h,v_h)[\psi_h - \psi]  + \frac{\alpha}{\tau} \int_\om  (v_h - \tilde v_h) (\psi_h - \psi)  \,\dd x \\
    &  \leq C_3 \sum_{T\in \T_h}\biggl( \norm{q(u_h, v_h)}_{L^2(T)}  + \frac{\kappa \eps}{\sqrt{\sigma_{T,1} \sigma_{T,2}}}
      \bignorm{\sqrt{h_{\partial T}} \llbracket A \nabla v_h \rrbracket}_{L^2(\partial T)}  \biggr) \nonumber \\
 & \qquad \times \bignorm{M_T^\top \nabla (\psi- v_h)}_{L^2(\Delta_T)} . \nonumber
  \end{align}
  Next, we estimate term (II). The equality
  \begin{equation*}
    (v_h - 1)(v_h-\psi_h) - \Pi_h ((v_h-1) (v_h - \psi_h)) = v_h (v_h-\psi_h) - \Pi_h(v_h (v_h - \psi_h))
  \end{equation*}
  yields
  \begin{align*}
    \vphantom{\int} \partial_v \F & (u_h,v_h) [v_h - \psi_h] - \partial_v \F_h(u_h,v_h)[v_h - \psi_h]\\
    & = \sum_{T\in \T_h} \bigg\{ \mu \int_T \Bigl(v_h (v_h - \psi_h) - \Pi_h \bigl( v_h (v_h-\psi_h) \bigr)  \Bigr) \nabla u_h^\top A \nabla u_h \,\dd x \\
     & \qquad + \frac{\kappa}{2 \eps} \int_T \Bigl(  v_h (v_h - \psi_h) - \Pi_h \bigl( v_h (v_h - \psi_h) \bigr)
      \Bigr)
    \sqrt{a} \,\dd x \bigg\} \,.
  \end{align*}
  Thus, thanks to the Cauchy-Schwarz inequality, to Lemma~\ref{anisotropicNodalInterpolation}, and to the choice of~$\psi_h$, we obtain
  \begin{align*}
  \partial_v \F & (u_h,v_h) [v_h - \psi_h] - \partial_v \F_h(u_h,v_h)[v_h - \psi_h] \\
      & \leq  \sum_{T\in \T_h} \biggnorm{\mu \nabla u_h^\top A \nabla u_h + \frac{\kappa}{2\eps}  \sqrt{a}}_{L^2 (T)} \Bignorm{v_h (v_h - \psi_h) - \Pi_h \bigl( v_h (v_h - \psi_h) \bigr) }_{L^2(T)} \\
      & \leq \hat C \sum_{T\in \T_h} h_T^2 \biggnorm{\mu \nabla u_h^\top A \nabla u_h + \frac{\kappa}{2\eps}  \sqrt{a}}_{L^2 (T)} \, \abs{v_h}_{W^{1,\infty}(T)} \bignorm{\nabla (\psi_h - v_h) }_{L^2 (T)} \,.
  \end{align*}
Now, since $\psi_{h} - v_{h} = \big[ Q_{h}(\psi - v_{h}) - (\psi - v_{h}) \big] + \psi - v_{h}$,
by exploiting Lemma~\ref{anisotropicInterpolation} for $s=1$ and Lemma~\ref{equivalence}, we conclude that
  \begin{align}
  \label{eq:estimate-v-2bis}
      & \partial_v \F (u_h,v_h) [v_h - \psi_h] - \partial_v \F_h(u_h,v_h)[v_h - \psi_h] \\
      & \leq \hat C C_4 \sum_{T\in \T_h} h_T^2\biggnorm{\mu \nabla u_h^\top A \nabla u_h + \frac{\kappa}{2\eps}  \sqrt{a}}_{L^2 (T)}  \abs{v_h}_{W^{1,\infty}(T)} \nonumber \\
      &  \qquad \Big( \frac{1}{\sigma_{2, T}}\bignorm{M_{T}^{\top}\nabla (\psi - v_h)}_{L^2 (\Delta_T)} + \bignorm{\nabla (\psi - v_h) }_{L^2 (\Delta_T)}  \Big) \nonumber \\
      & \leq C_5 \sum_{T\in \T_h} \frac{h_T^2}{\sigma_{T,2}} \biggnorm{\mu \nabla u_h^\top A \nabla u_h +
        \frac{\kappa}{2\eps}  \sqrt{a}}_{L^2 (T)}  \abs{v_h}_{W^{1,\infty}(T)} \bignorm{M_{T}^{\top}\nabla
       (\psi - v_h) }_{L^2 (\Delta_T)} \,, \nonumber
  \end{align}
with $C_5 = 2 \hat C C_4$.
We proceed in a similar way on term (III) in \eqref{eq:estimate-v}, so that we obtain
  \begin{align}\label{eq:estimate-v-3}
      \frac{\alpha}{\tau} & \int_\om    (v_h - \tilde v_h ) (v_h - \psi_h) \,\dd x - \frac{\alpha}{\tau}
      \int_\om  \Pi_h \bigl( (v_h - \tilde v_h ) (v_h - \psi_h) \bigr) \,\dd x  \\
      & \leq \sum_{T\in \T_h} \frac{\alpha}{\tau} \abs{T}^\half \Bignorm{(v_h- \tilde v_h) ( v_h - \psi_h)
      - \Pi_h \bigl( (v_h - \tilde v_h) ( v_h - \psi_h) \bigr)}_{L^2(T)} \nonumber  \\
      & \leq \hat C \sum_{T\in \T_h} \frac{ \alpha \, h_T^2}{\tau} \abs{T}^\half
        \abs{v_h - \tilde v_h}_{W^{1,\infty} (T)} \norm{ \nabla (\psi_h - v_h)}_{L^2(T)} \nonumber \\
      & \leq C_5 \sum_{T\in \T_h} \frac{ \alpha \, h_T^2}{\tau \sigma_{T,2}}
        \norm{\nabla (v_h - \tilde v_h)}_{L^2 (T)} \norm{ M_T^\top  \nabla (\psi - v_h)}_{L^2(T)} \,, \nonumber
  \end{align}
where, in the last inequality, we have also exploited the property that $v_h - \tilde v_h$ is piecewise affine. Combining estimates \eqref{eq:estimate-v-1}--\eqref{eq:estimate-v-3}, we deduce result \eqref{classicalResidualEstimateV}.
\end{proof}

With a view to the mesh adaptation procedure, we combine \eqref{classicalResidualEstimateU} and \eqref{classicalResidualEstimateV} in a single estimate, i.e.,
  \begin{align}\label{eq:residual-estimate}
      \partial_u \E &  (u_h,v_h)[\varphi] + \partial_v \F(u_h,v_h) [v_h - \psi] + \frac{\alpha}{\tau}
      \int_\om  (v_h - \tilde v_h) (v_h - \psi)  \, \dd x \\
    &  \leq C \sum_{T \in \T_h} \Big[\gamma_T(u_h,v_h)  \norm{M_T^\top \nabla \varphi}_{L^2(\Delta_T)} +
      \rho_T  (u_h,v_h) \bignorm{M_T^\top \nabla (\psi - v_h)}_{L^2(\Delta_T)}\Big] \,, \nonumber
  \end{align}
  for all $\varphi \in H^1(\om)$ with $\varphi = 0$ on $\partial \om$, and for all $\psi \in H^1(\om)$ with $\psi \leq \tilde v_h$.

It is evident that result \eqref{eq:residual-estimate} is not yet useful in practice since it depends on the generic functions~$\varphi$ and~$\psi$. As detailed in the next section, to make computable the right-hand side of~\eqref{eq:residual-estimate}, we follow the approach in \cite{ArtForMicPer2015}, first picking $\varphi = u - u_h$ and $\psi = v$, i.e., setting
\begin{align}
 & \Xi (u_h, v_h) \coloneq \sum_{T\in \T_h} \Xi_T (u_h , v_h) \,,\\[1mm]
 & \Xi_T (u_h ,v_h) \coloneq \gamma_T  (u_h, v_h) \norm{M_T^\top \nabla (u-u_h)}_{L^2(\Delta_T)} \label{eq:error-estimate} \\
  &\qquad \qquad\qquad\qquad+ \rho_T (u_h, v_h)  \bignorm{M_T^\top \nabla (v-v_h)}_{L^2(\Delta_T)} \,, \nonumber
\end{align}
for any $T \in \T_h$, and then resorting to a gradient recovery procedure to replace the derivatives
of $u$ and $v$.


\section{From the Estimator to the Mesh}\label{sec:mesh-construction}
To commute $\Xi(u_h,v_h)$ into an actual a posteriori error estimator able to drive a mesh adaptation procedure,
we follow the metric-based approach in \cite{ArtForMicPer2015,MicPer2011,ForPer2003,WCCM}.
This consists of an iterative procedure, so that, at each iteration~$j$, with $j\geq 0$,
\emph{(i)} we compute the error estimator in the current mesh, $\T_h^{(j)}$; \emph{(ii)} we derive the metric tensor field,
${\cal M}^{(j+1)}$; \emph{(iii)}
we build the new adapted mesh, $\T_h^{(j+1)}$.
We now detail these three steps.

\vspace*{3mm}
\noindent
\emph{(i)} For every $T\in \T^{(j)}_h$ and every $w \in H^1(\om)$, using the singular value decomposition,
$M_T = U_T \Sigma_T V_T^\top$, we can rewrite the norm $\norm{M_T^\top \nabla w}^2_{L^2(\Delta_T)}$
in $\Xi_T(u_h,v_h)$ as
\begin{equation*}
  \begin{split}
    \bignorm{M_T^\top \nabla w}^2_{L^2(\Delta_T)}
    &= \bignorm{\Sigma_T U_T^\top \nabla w }^2_{L^2(\Delta_T)} =  \sum_{i=1}^{2} \int_{\Delta_T}  \sigma_{T,i}^2 \bigabs{ \rr_{T,i} \cdot \nabla w}^2  \,\dd x \\
    &= \sum_{i=1}^{2} \sigma_{T,i}^2 \, \rr_{T,i}^\top \, {\cal G}_T(w) \, \rr_{T,i} \,,
  \end{split}
\end{equation*}
where ${\mathcal G}_T \colon H^1(\Delta_T) \to L^2(\Delta_T;\R^{2\times2})$ is the symmetric semipositive definite matrix
\begin{equation*}\everymath{\displaystyle}
  {\cal G}_T(w) \coloneq
  \begin{pmatrix}
    \int_{\Delta_T} \abs{\partial_1 w}^2 \, \dd x & \int_{\Delta_T} \partial_1 w \, \partial_2 w \, \dd x \\[12pt]
    \int_{\Delta_T} \partial_1 w \, \partial_2 w \, \dd x & \int_{\Delta_T} \abs{\partial_2 w}^2 \, \dd x
  \end{pmatrix}.
\end{equation*}
From \eqref{eq:error-estimate} we obtain
\begin{align*}
  \Xi_T (u_h ,v_h) = & \ \gamma_T (u_h, v_h) \biggl( \sum_{i=1}^{2} \sigma_{T,i}^2 \, \rr_{T,i}^\top \,
  {\cal G}_T(u-u_h) \, \rr_{T,i}\biggr)^\half \\
  & + \rho_T (u_h, v_h) \biggl( \sum_{i=1}^{2} \sigma_{T,i}^2 \, \rr_{T,i}^\top \, {\cal G}_T(v-v_h) \,
  \rr_{T,i} \biggr)^\half.
\end{align*}
Now, the first-order partial derivatives of $u$ and $v$ in ${\cal G}_T$ are replaced via the well-known
Zienkiewicz-Zhu recovery procedure (see \cite{ZZ87,ZZ92}), so that we obtain the local a posteriori error
estimator,
\begin{align}\label{eq:mesh-error-estimate}
  \Xi_T^R (u_h ,v_h) = & \ \gamma_T (u_h, v_h) \biggl( \sum_{i=1}^{2} \sigma_{T,i}^2 \, \rr_{T,i}^\top \,
  {\cal G}_T^R(u_h) \, \rr_{T,i}\biggr)^\half \\
  & + \rho_T (u_h, v_h) \biggl( \sum_{i=1}^{2} \sigma_{T,i}^2 \, \rr_{T,i}^\top \, {\cal G}_T^R(v_h) \,
  \rr_{T,i} \biggr)^\half, \nonumber
\end{align}
where $[{\cal G}_T^R(w_h)]_{ij} = \int_{\Delta_T} \big(\partial_i w_h - R^i(w_h)\big)
\big(\partial_j w_h - R^j(w_h)\big)\, \dd x$, with $i$, $j=1,2$, $w_h \in \UU_h$ and where $[R^1(w_h),R^2(w_h)]^\top$
denotes the recovered gradient of $w_h$.

\vspace*{3mm}
\noindent
\emph{(ii)} Two criteria drive the derivation of the metric, i.e., the minimization of the number of the mesh elements for a given accuracy~$\TOL$ on the global error estimator,
\[
  \Xi^R(u_h,v_h) := \sum_{T \in \T_h}\Xi_T^R(u_h,v_h)\,,
\]
and the error equidistribution,
\[
  \Xi_T^R(u_h ,v_h) \leq \frac{\TOL}{\# \T^{(j)}_h}\,.
\]
For this purpose, we first scale \eqref{eq:mesh-error-estimate} with respect to the area $|T|=\abs{\hat T} \sigma_{T,1} \sigma_{T,2}$ of the element $T\in \T^{(j)}_h$,
such that
\begin{equation*}
  \Xi_T^R (u_h ,v_h) = \alpha_T \Upsilon_T (s_T , \rr_{T,1}) \,,
\end{equation*}
where
\begin{align*}
  \alpha_T &\coloneq \abs{\hat T} (\sigma_{T,1} \, \sigma_{T,2})^\frac{3}{2}, \\
  \Upsilon_T (s_T , \rr_{T,1}) &\coloneq \Bigl( s_T \, \rr_{T,1}^\top \,  \Gamma_T (u_h,v_h)\, \rr_{T,1} + \frac{1}{s_T}\, \rr_{T,2}^\top \, \Gamma_T (u_h,v_h) \, \rr_{T,2} \Bigr)^\half, \\
  \Gamma_T (u_h, v_h) &\coloneq \overline \gamma_T^2 (u_h,v_h) \, \overline {\cal G}_T^R(u_h) +  \overline\rho_T^2 (u_h,v_h) \, \overline {\cal G}_T^R(v_h) \,, \\
  \overline \gamma_T (u_h, v_h) &\coloneq \frac{ \gamma_T (u_h, v_h)}{(\abs{\hat T} \sigma_{T,1} \sigma_{T,2})^\half} \quad \text{and}\quad  \overline \rho_T (u_h, v_h) \coloneq \frac{ \rho_T (u_h, v_h)}{(\abs{\hat T} \sigma_{T,1} \sigma_{T,2})^\half} \,, \\
  \overline {\cal G}_T(w_h) &\coloneq \frac{{\cal G}_T(w_h)}{\abs{\hat T} \sigma_{T,1} \sigma_{T,2}} \quad
  \mbox{ with } w_h = u_h, v_h\, .
\end{align*}
Notice that the quantity $\Upsilon_T (s_T , \rr_{T,1})$ implicitly depends also on~$\rr_{T,2}$ via
the orthonormality condition $\rr_{T,1}^\top\rr_{T,2} = 0$.

Thus, to minimize the cardinality of the mesh (or, likewise, to maximize the triangle area)
while enforcing the local accuracy ${\TOL}/{\# \T^{(j)}_h}$,
we are led to solve the local constrained minimization problem
$$
  \min_{\substack{s_T \geq 1, \, \rr_{T,1} \in \bS^1}} \Upsilon_T (s_T, \rr_{T,1}) \,,
$$
$\bS^1$ being the unit sphere.
Following \cite{ForMicPer2004}, we can analytically compute the unique solution to this problem, given
by
$$
  s_T^* = \sqrt{ \frac{\vartheta_{T,1}}{\vartheta_{T,2}} },
  \quad \rr^*_{T,1} = \vv_{T,2} \,,
$$
with $\{\mathbf{v}_{T,i}, \vartheta_{T,i}\}$ the eigenpair of $\Gamma_T(u_h,v_h)$ for $i = 1,2$,
with $\vartheta_{T,1} > \vartheta_{T,2}$ and $\mathbf{v}_{T,i}^{\top} \mathbf{v}_{T,j}=\delta_{ij}$.
Finally, the equidistribution criterion allows us to compute the optimal lengths
\begin{equation}\label{eq:new-metric}
   \sigma^*_{T,1} = \Biggl( \frac{\TOL}{\sqrt{2} \, \abs{\hat T}\, \# \T_h^{(j)}} \, \sqrt{\frac{\vartheta_{T,1}}{\vartheta^2_{T,2}}} \Biggr)^\frac{1}{3} \quad \text{and} \quad \sigma^*_{T,2} = \Biggl( \frac{\TOL}{\sqrt{2} \, \abs{\hat T}\, \# \T_h^{(j)}} \, \sqrt{\frac{\vartheta_{T,2}}{\vartheta^2_{T,1}}} \Biggr)^\frac{1}{3}.
\end{equation}
The metric field $\M^{(j+1)}$ is approximated by a piecewise tensor, provided by
\begin{equation}\label{metrica}
  \M^{(j+1)}\big|_T = \frac{1}{({\sigma^*_{T,1}})^2}\rr^*_{T,1}{\rr^{*,\top}_{T,1}} +
                      \frac{1}{({\sigma^*_{T,2}})^2}\rr^*_{T,2}{\rr^{*,\top}_{T,2}} \,,
\end{equation}
for each $T\in\T_h^{(j)}$ (see \cite{GeoBor1998}).
We remark that the mismatch between the index $(j)$ for the mesh and $(j+1)$ for the metric is due to
the predictive feature of the adaptive algorithm, which exploits the information on the current mesh
to extrapolate the ``optimal'' mesh for the next iteration.

\vspace*{3mm}
\noindent
\emph{(iii)} This step is committed to a metric-based mesh generator. In particular, we choose the FreeFEM environment~\cite{Hec2012}. The metric $\M^{(j+1)}$ becomes the input to the built-in
function ${\tt adaptmesh}$, which provides the anisotropic adapted mesh $\T_h^{(j+1)}$.


\section{Numerical Examples}\label{sec:num-examples}
Next step is to properly combine the minimization in \eqref{eq:sim-minu}--\eqref{eq:sim-minv} together
with the adaptation procedure detailed in the previous section.
With this aim, we resort to an approach that is a variant to Algorithms 2 and 3 in \cite{ArtForMicPer2015},
itemized in Algorithm~\ref{alg:mesh-adapt-alg} below.

The procedure consists of three main loops: the outermost cycle steps over the quasi-static time advancing,
the intermediate one manages the update of the mesh, while the innermost loop controls the optimization
of the physical variables~$u$ and~$v$.
This last phase is supervised by a maximum number $\mathtt{MaxIt}$ of iterations, together
with a control on the increment of the phase field, to within the tolerance $\TOL_v$.
In order to recover the possible lack of accuracy on $v$, the same check on the increment is also
required in the intermediate loop, in combination with a stagnation of the mesh cardinality, up to
a tolerance $\TOL_m$.

The minimization performed in lines 9 and 16 are carried out by an interior point method using the package \texttt{IPOPT} (see \cite{WaeBie2006}), included in \texttt{FreeFEM} (see \cite{Hec2012}).
\texttt{IPOPT} is a common large-scale nonlinear optimization tool based on the interior point algorithm. Both equality and inequality constraints can be tackled via suitable slack variables. The bound on the phase field can be directly enforced as a box contraint.
Among the input parameters of \texttt{IPOPT}, the user has to also provide the gradient of the functional and of the constraint with respect to the phase field.

The metric computation in line 11 is driven by $u_h = u_{i,j}$ and $v_h = v_{i,j}$.
The operator $\Pi_h^{(m+1)}$ is the Lagrangian interpolant associated with the mesh~$\T_h^{m}$
evaluated at the vertices of the mesh $\T_h^{(m+1)}$, which is employed to project the phase field
on the newly adapted mesh before the next iteration.

Table~\ref{tab:num-parameter} gathers the values adopted in the numerical assessment for both the
input parameters to Algorithm~\ref{alg:mesh-adapt-alg} and for the physical quantities involved in functional~\eqref{eq:h1-approx-functional}. For a sensitivity analysis with respect to some of these parameters in the anti-plane case we refer to \cite{ArtForMicPer2015b}. In the tests below, following \cite{BouFraMar2000,BurOrtSue2010,ArtForMicPer2015},
we consider notched specimens characterized by a thin slit to model the initial crack.
The time dependent boundary condition in \eqref{eq:sim-minu} is assumed to be linear.
For technical reasons related to the definition of space $\GSBV(\om)$, we extend the physical domain beyond the Dirichlet boundary. Such an extension turns out to be advisable also for the phase field in order to
avoid an underestimate of the fracture energy when the damage approaches the Dirichlet boundary.
\begin{algorithm}[htb]
  \caption{Alternating Minimization + Anisotropic Mesh Adaptation for Shells}\label{alg:mesh-adapt-alg}
  \begin{algorithmic}[1]
    \everymath{\displaystyle}
    \setlength{\parskip}{3pt}
    \State Input: $\TOL$, $\TOL_m$, $\TOL_v$, $\mathtt{MaxIt}$, $\alpha$, $\tau$,
                  $u_0, v_0$, $\T_h^{(0)}$
    \For{$i=0$ to $k$}
    \State $j\leftarrow 0$; $u_{i,0} \leftarrow u_0$; $v_{i,0} \leftarrow v_0$
    \Repeat
    \State $m \leftarrow 0$
    \Repeat
    \State $j \leftarrow j+1$; $m \leftarrow m+1$
    \State $u_{i,j} \leftarrow \argmin \bigl\{\E_h (u, v_{i,j-1}) : u \in \UU_h^{(m)} , u = g(t_i) \text{ on } \partial\omega \bigr\}$
    \State $v_{i,j} \leftarrow \argmin \biggl\{\F_h ( u_{i,j} , v) + \frac{\alpha}{2\tau} \norm{v-v_{i-1}}^2_{\VV_h} : v \in \VV_{h}^{(m)}, v \leq v_{i-1} \biggr\}$
    \Until $m = \mathtt{MaxIt}$ \textbf{or} $\norm{ v_{i,j} -   v_{i,j-1}}_\infty < \TOL_v$
    \State compute $\M^{(m+1)}$ based on \eqref{metrica}
    \State generate $\T_h^{(m+1)}$ associated with $\M^{(m+1)}$
    \State $v_{i,j} \leftarrow \Pi_h^{(m+1)} (v_{i,j})$; $v_{i-1} \leftarrow \Pi_h^{(m+1)} (v_{i-1})$
    \Until{$\frac{\abs{\# \T_h^{(m+1)} - \# \T_h^{(m)}}}{\# \T_h^{(m)}} < \TOL_m$ \textbf{and} $\norm{ v_{i,j} -   v_{i,j-1}}_\infty < \TOL_v$}
\State $u_i \leftarrow \argmin \bigl\{\E_h (u, v_{i,j}) : u \in \UU_h^{(m+1)} , u = g(t_i) \text{ on } \partial\omega \bigr\}$
\State $v_i \leftarrow \argmin \biggl\{\F_h ( u_{i} , v) + \frac{\alpha}{2\tau} \norm{v-v_{i-1}}^2_{\VV_h} : v \in \VV_{h}^{(m+1)}, v \leq v_{i-1} \biggr\}$
    \State $\T_h^{(0)} = \T_h^{(m+1)}$
    \EndFor
  \end{algorithmic}
\end{algorithm}

\begin{table}
  \caption{Input parameters to Algorithm \ref{alg:mesh-adapt-alg} and physical quantities
           for functional~\eqref{eq:h1-approx-functional}.}
  \centering
  \begin{tabular}{cccccccccc}
    \hline
    $\TOL$ & $\TOL_m$ & $\TOL_v$ & $\mathtt{MaxIt}$ & $\tau$ & $\eps$ & $\eta$ & $\kappa$ & $\lambda$ & $\mu$ \\
    \hline
    $ 10^{-3} $ & $ 10^{-2}  $ & $ 2\cdot 10^{-3} $ & 8 & $10^{-2}$ &  $ 5 \cdot 10^{-3} $ & $ 10^{-5} $ & $ 1 $ & $0$ & $1$ \\
    \hline
  \end{tabular}
  \label{tab:num-parameter}
\end{table}

\subsection{A Piece of a Cylinder}
We consider a piece of cylindrical surface with radius $R=1$ and length $L$.
As the map $\phi$, we choose cylindrical coordinates
\begin{equation}\label{eq:param-cylinder}
  (x,y) \mapsto
  \begin{pmatrix}
    R \cos x \\
    R \sin x \\
    y
  \end{pmatrix}
  \quad \tforall (x,y) \in \om = \biggl(-\frac{\pi}{2} , \frac{\pi}{2} \biggr) \times (0,L).
\end{equation}
With this at hand, we have
\begin{equation*}
  (a^{\alpha \beta}) =
  \begin{pmatrix}
    \displaystyle\frac{1}{R^2 } & 0 \\
    0 & 1 \\
  \end{pmatrix},
  \quad
  (b_{\alpha\beta}) =
  \begin{pmatrix}
    -R & 0 \\
    0 & 0 \\
  \end{pmatrix}
  \quad \text{and} \quad
  \sqrt{a} = R.
\end{equation*}
For the crack initialization, we define the notch $\Gamma \coloneq [-10^{-3} , 10^{-3}] \times [0,0.3]$,
so that the computation takes place in $\om \setminus \Gamma$. We also set
\begin{equation}\label{eq:bc}
  g(t)  \coloneq
  \left\{
    \begin{aligned}
      &t &&\text{on }  [10^{-3}, \pi/2 ] \times \{0\}, \\
      -&t &&\text{on } [-\pi/2 , -10^{-3}] \times \{0\}, \\
      &0&&\text{elsewhere.}
    \end{aligned}
  \right.
\end{equation}
The extended domain adopted in such a case is $\om \cup (-\frac{\pi}{2} , \frac{\pi}{2} ) \times (-0.1,0]$.
\begin{figure}
  \centering
  \begin{subfigure}{0.3\textwidth}
    \includegraphics[width=\textwidth]{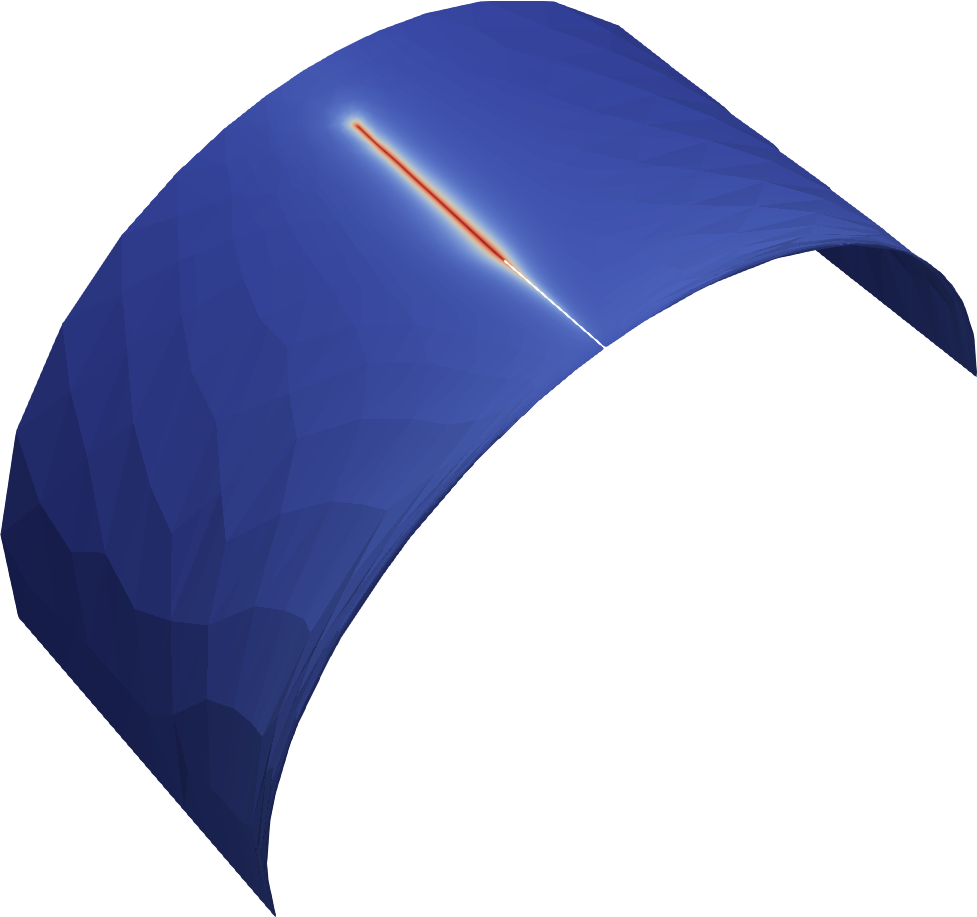}%
    \subcaption*{$t=1.9$}%
  \end{subfigure}%
  \hspace{1em}%
  \begin{subfigure}{0.3\textwidth}
    \includegraphics[width=\textwidth]{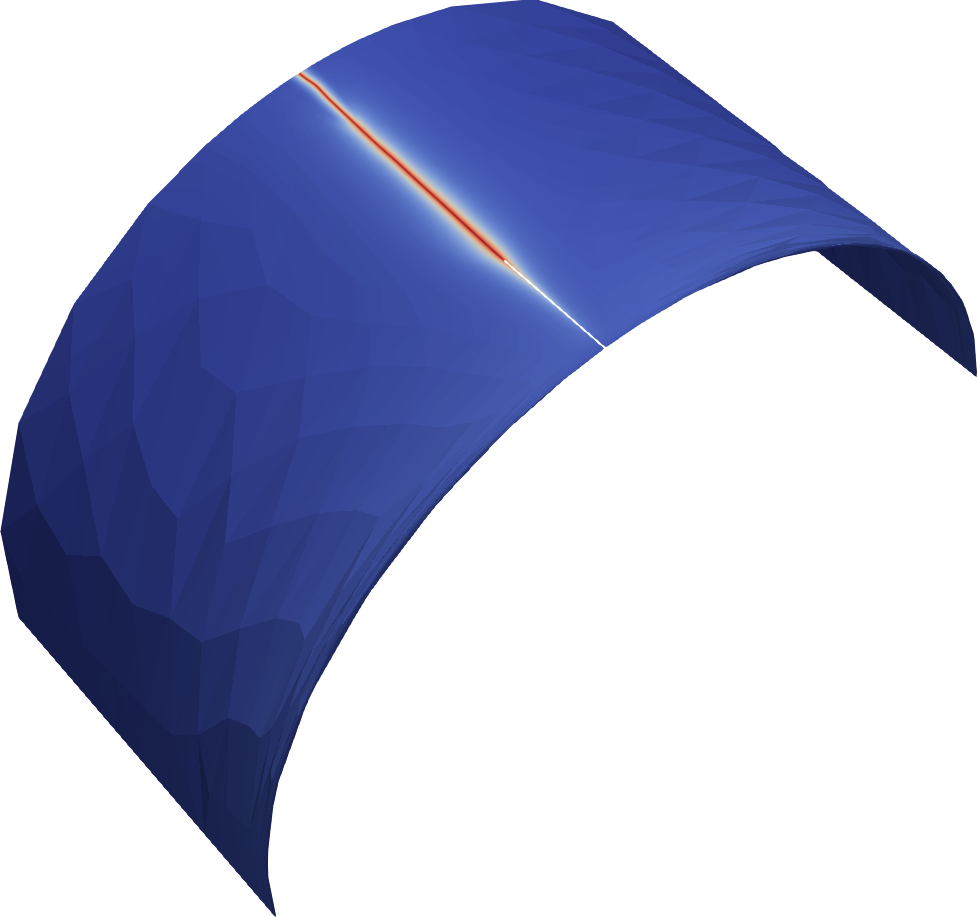}%
    \subcaption*{$t=1.91$}%
  \end{subfigure}%
  \hspace{1em}%
  \begin{subfigure}{0.3\textwidth}
    \includegraphics[width=\textwidth]{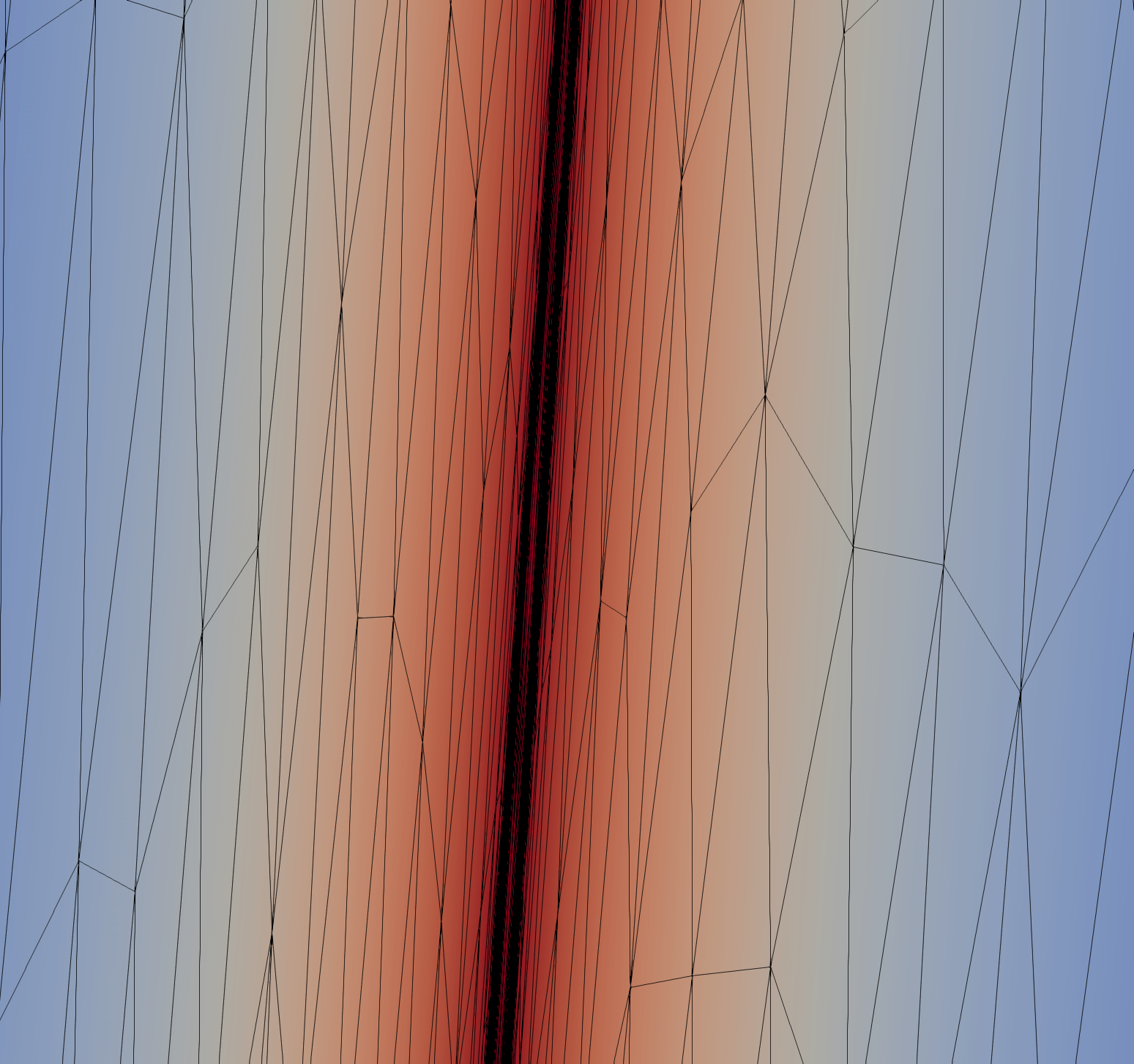}%
    \subcaption*{Enlarged mesh along the crack}%
  \end{subfigure}%
  \caption{Piece of a cylinder: phase field at two consecutive times and detail of the mesh around the crack
          for $L=1$.}%
  \label{fig:straight-cylinder}%
\end{figure}

In Figure~\ref{fig:straight-cylinder} we show the phase field computed for $L=1$ as well as a zoom in on
the mesh close to the crack, where it exhibits a strong directional behavior.
\begin{figure}\centering
  \begin{subfigure}{0.3\textwidth}
    \includegraphics[width=\textwidth]{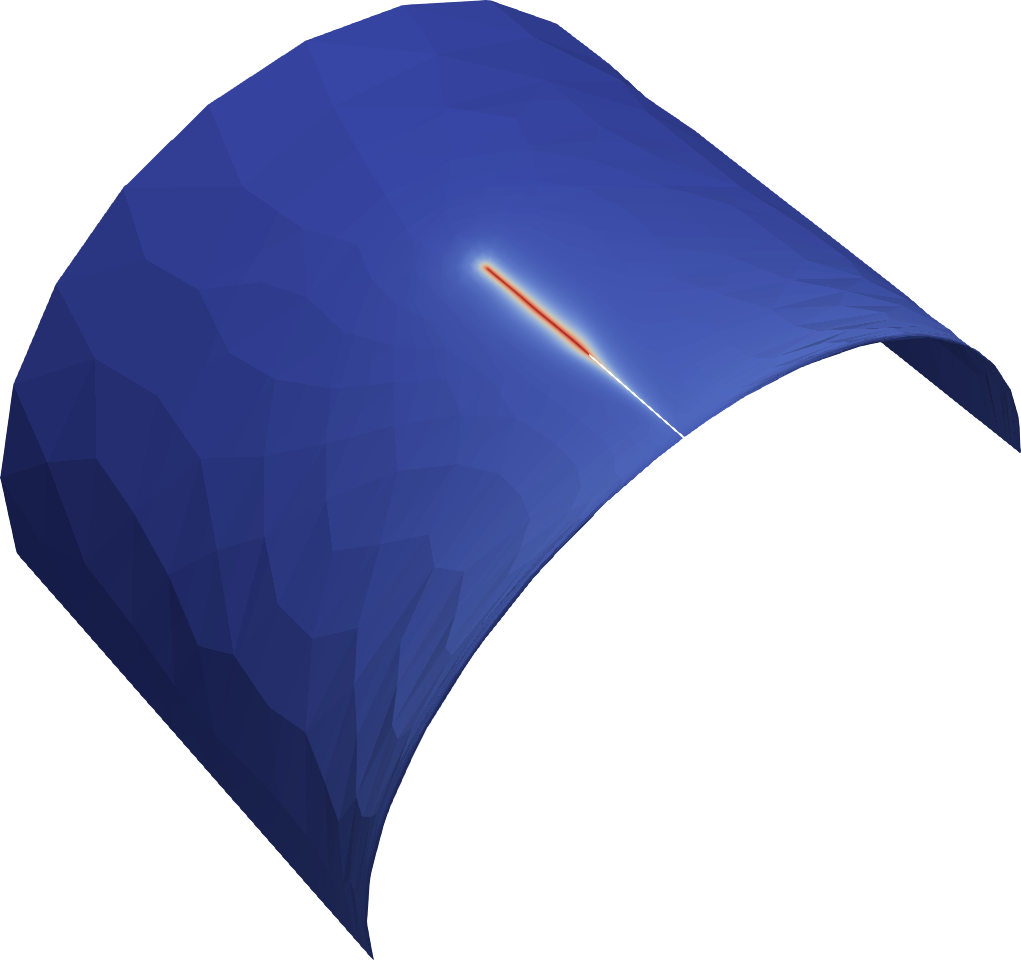}%
    \subcaption*{$t=1.91$}%
  \end{subfigure}%
  \hspace{1em}%
  \begin{subfigure}{0.3\textwidth}
    \includegraphics[width=\textwidth]{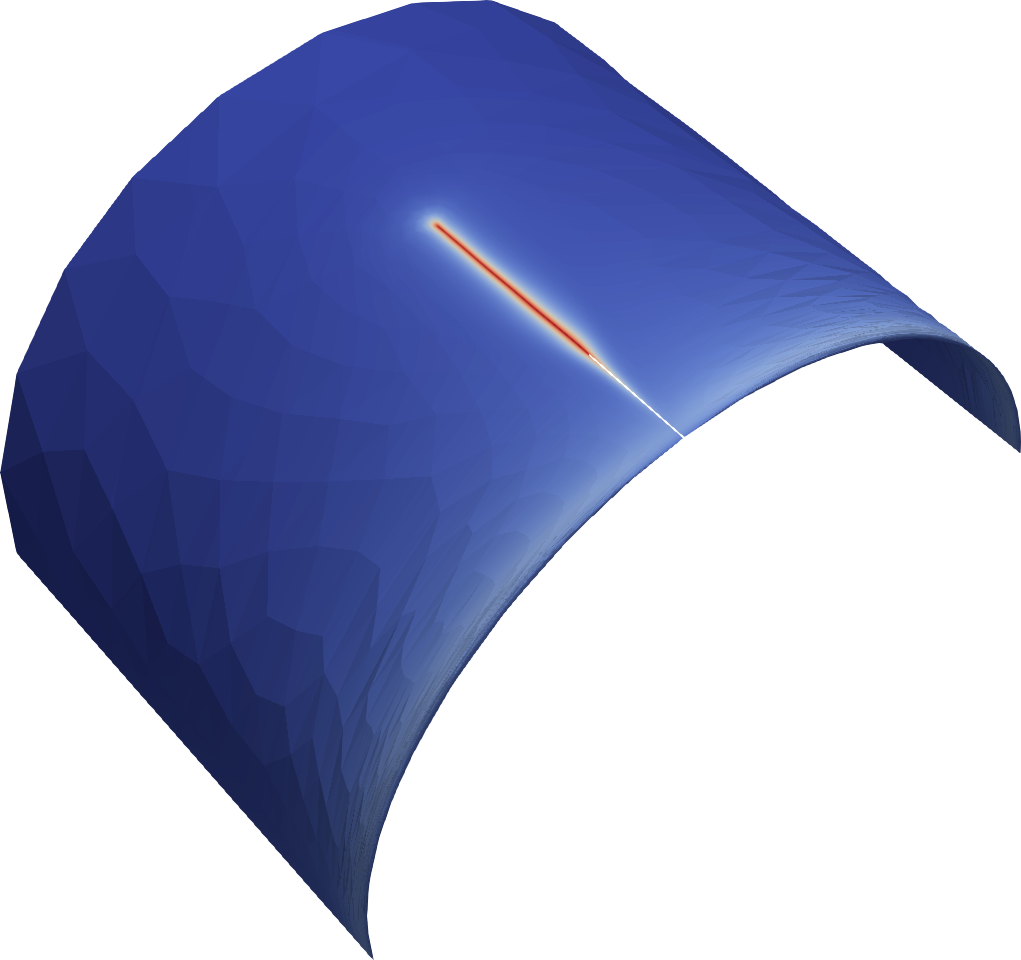}%
    \subcaption*{$t=2.83$}%
  \end{subfigure}%
  \hspace{1em}%
  \begin{subfigure}{0.3\textwidth}
    \includegraphics[width=\textwidth]{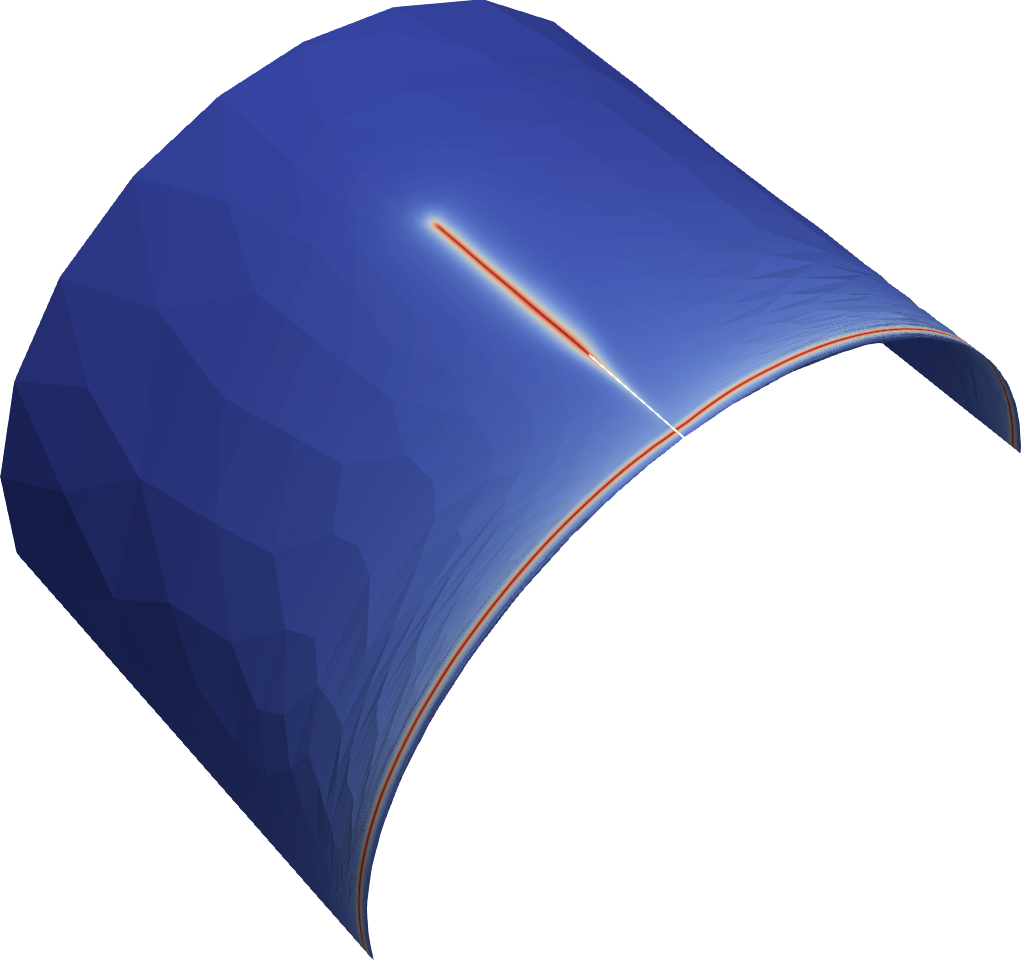}%
    \subcaption*{$t=2.84$}%
  \end{subfigure}%
  \caption{Piece of a cylinder: phase field at three times for $L=2$.}%
  \label{fig:long-cylinder}%
\end{figure}

Note that the term $\int_\om \stiff^{\alpha \beta \sigma \tau} b_{\alpha \beta} b_{\sigma \tau} \abs{u}^2 \sqrt{a} \,\dd x $ in the functional \eqref{eq:h1-approx-functional} adds some energy even though the displacement is constant, due to a curvature effect. Furthermore, the boundary condition creates some tension along the boundary
itself. Thus, if the length $L$ is sufficiently large, a crack is generated along the boundary before
the original crack fully develops. This phenomenon is confirmed in Figure~\ref{fig:long-cylinder}, where
we set $L=2$. The initial crack propagates until $t=2.83$. Then, at $t=2.84$ the surface suddenly breaks along the Dirichlet boundary. To contain this effect, we pick the Lam\'e coefficient $\lambda$ equal to zero
in Table~\ref{tab:num-parameter}.

\begin{figure}
  \centering
    \includegraphics[width=0.45\textwidth]{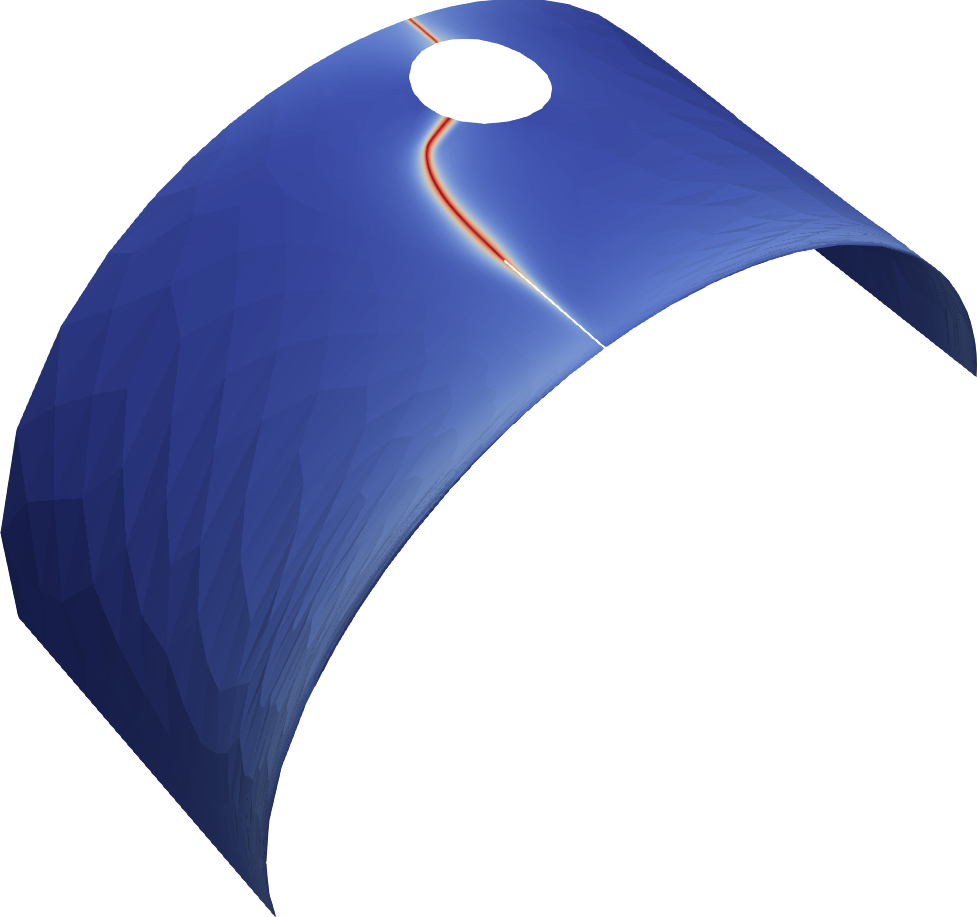}%
    \qquad
    \includegraphics[width=0.45\textwidth]{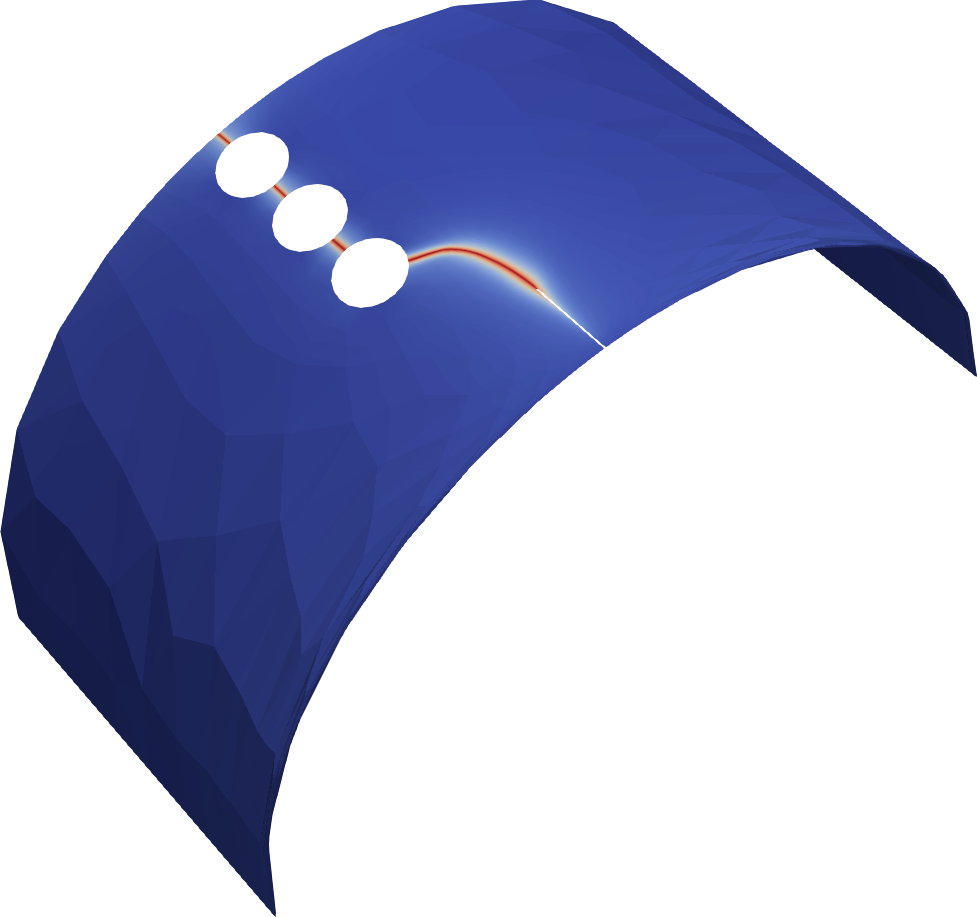}%
  \caption{Piece of a cylinder: phase field at time $t = 2.97$ and $t=1.46$ for the single-hole
           (left) and three-hole (right) configuration for $L=1$.}
  \label{fig:hole-cylinder}%
\end{figure}
\begin{figure}
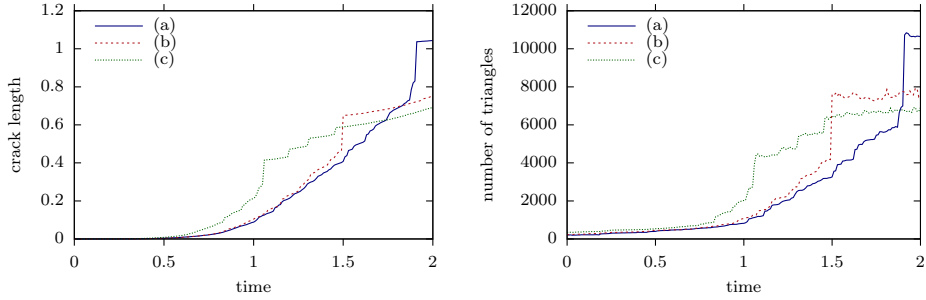

    \centering
    \scalebox{.8}{\input{numerics/figures/CrackLength.tex}}
    \scalebox{.8}{\input{numerics/figures/NumberOfElements.tex}}
  \caption{Piece of a cylinder: crack length (left) and number of triangles (right) as functions of time for
  the configurations in Figure~\ref{fig:straight-cylinder}, (a), in Figure~\ref{fig:hole-cylinder}, left, (b),
  and in Figure~\ref{fig:hole-cylinder}, right, (c).}%
  \label{fig:plots}%
\end{figure}
We now weaken the surface by introducing holes. In particular, we consider the two configurations
in Figure~\ref{fig:hole-cylinder} characterized by a single hole with radius $0.15$ centered at $(0.3,0.75)$
and by three holes with radius $0.08$ and centers $(-0.2,0.88)$, $(-0.2,0.68)$, $(-0.2,0.48)$.
In both cases, the crack bends entering the holes. This confirms that the crack path is not biased
by the anisotropic mesh adaptatation, consistently with what observed in \cite{ArtForMicPer2015}.

In Figure~\ref{fig:plots} we provide more quantitative information about the physics of the problem and the mesh adaptation procedure for all the considered configurations.
In particular, in the left panel, we plot the time evolution of the quantity $\kappa^{-1} \DD_h(v_h)$, which $\Gamma$\nobreakdash-converges to the length of the crack
(see~Section~\ref{cha:numerics} and~\ref{appendix}), while, in the right panel, we show the trend of the cardinality of the mesh.
Both the crack length and the number of triangles exhibit a similar trend since the most relevant phenomena occur around the crack path.
\begin{figure}
  \centering
  \begin{subfigure}{0.28\textwidth}
    \includegraphics[width=\textwidth]{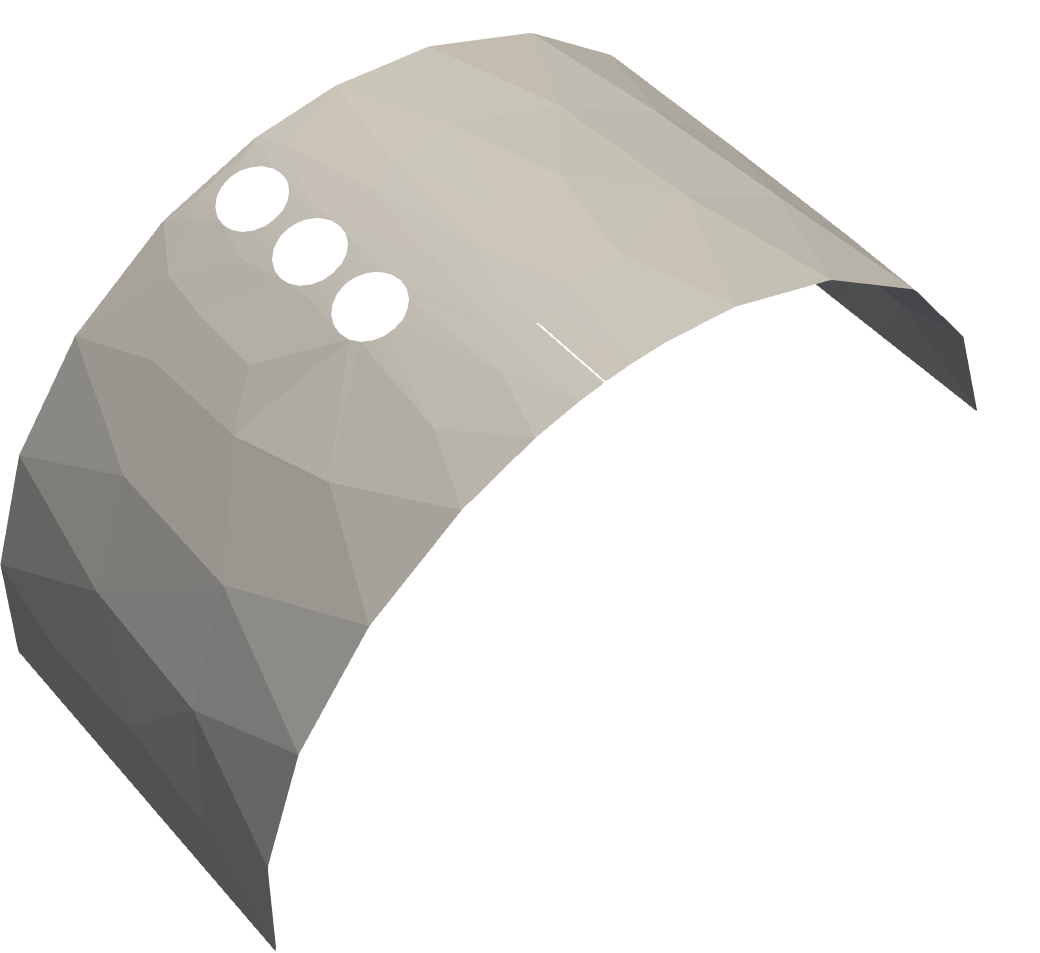}%
    \subcaption*{$t=0$}%
  \end{subfigure}%
  \hspace{2em}%
  \begin{subfigure}{0.28\textwidth}
    \includegraphics[width=\textwidth]{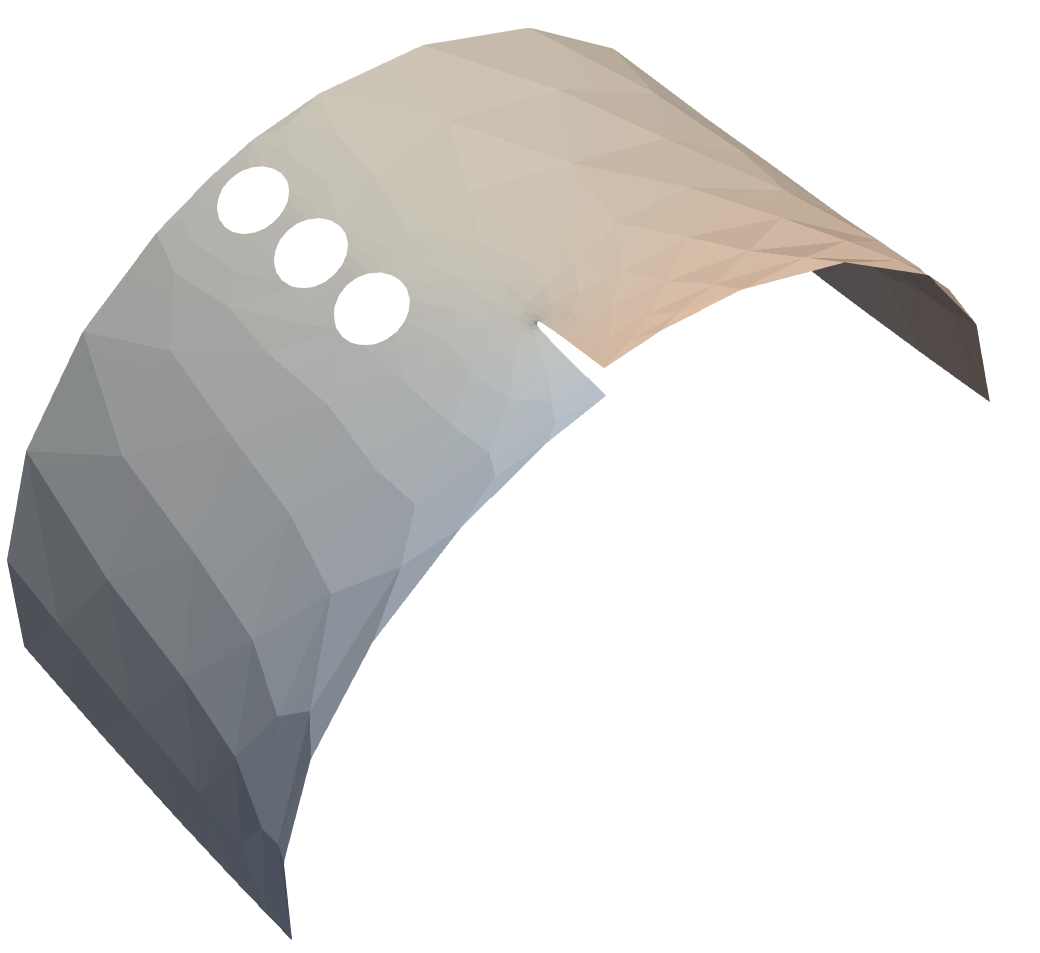}%
    \subcaption*{$t=0.4$}%
  \end{subfigure}%
  \hspace{2em}%
  \begin{subfigure}{0.28\textwidth}
    \includegraphics[width=\textwidth]{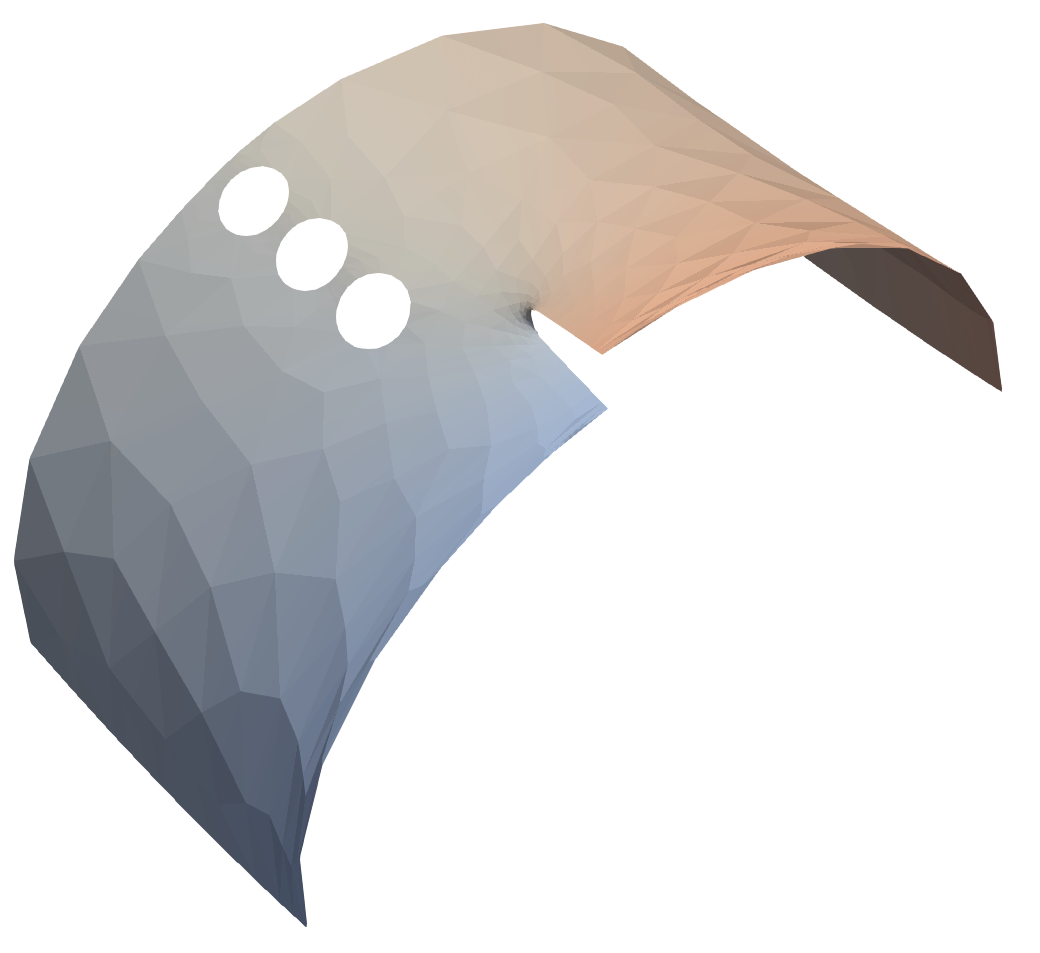}%
    \subcaption*{$t=0.8$}%
  \end{subfigure}\\[1em]%
  \begin{subfigure}{0.28\textwidth}
    \includegraphics[width=\textwidth]{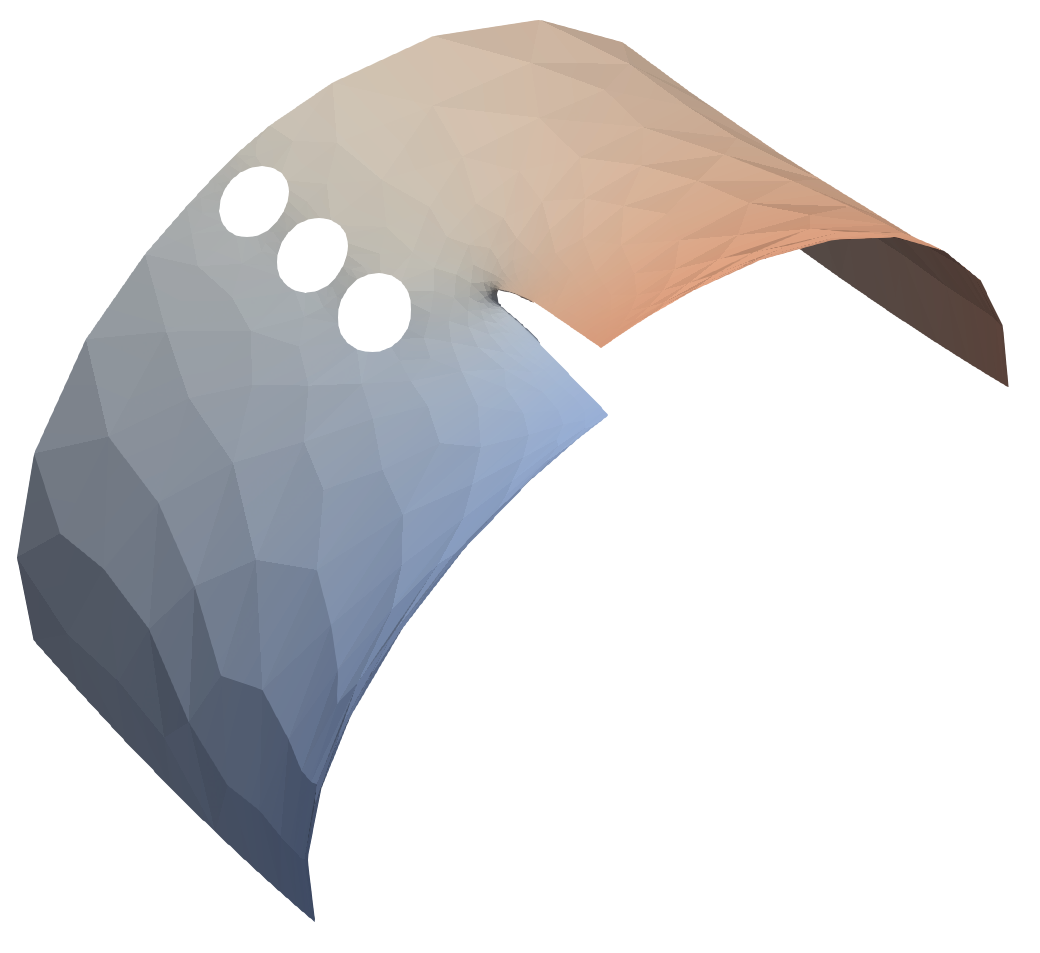}%
    \subcaption*{$t=1$}%
  \end{subfigure}%
  \hspace{2em}%
  \begin{subfigure}{0.28\textwidth}
    \includegraphics[width=\textwidth]{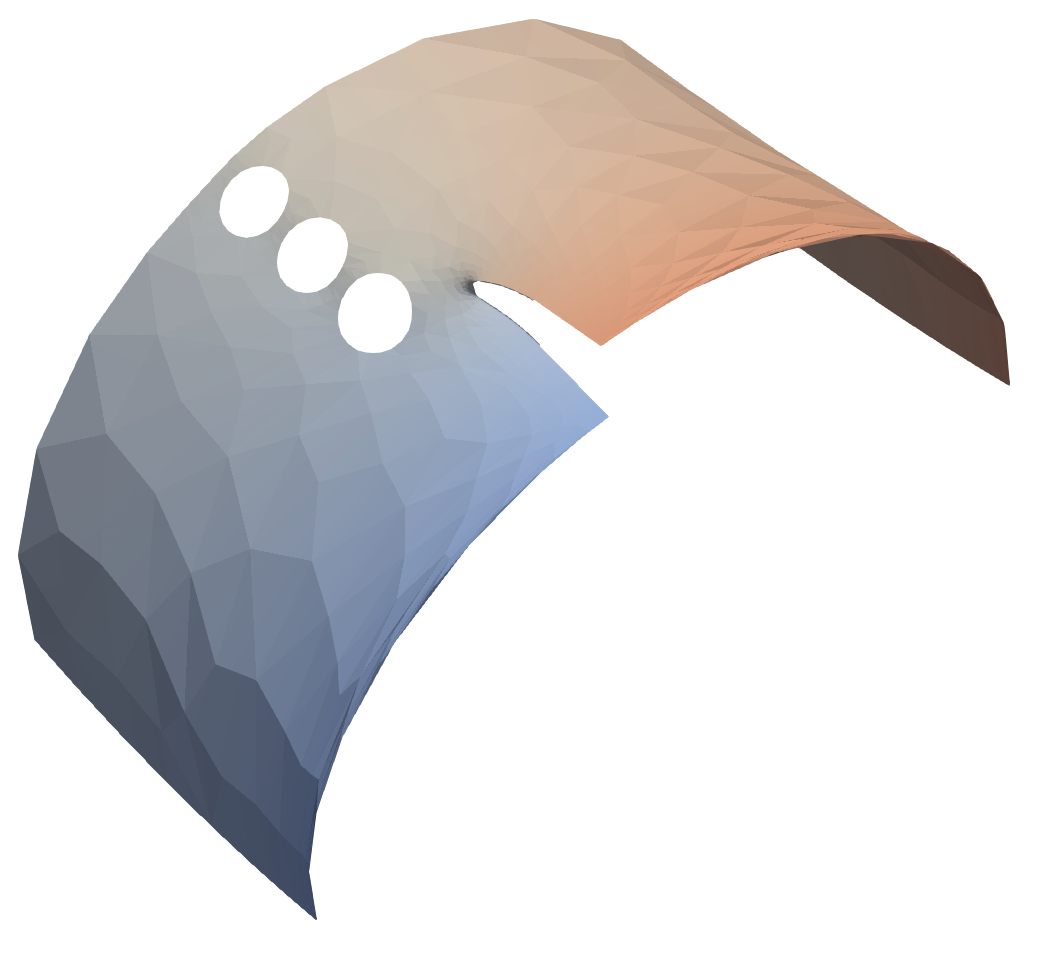}%
    \subcaption*{$t=1.05$}%
  \end{subfigure}%
  \hspace{2em}%
  \begin{subfigure}{0.28\textwidth}
    \includegraphics[width=\textwidth]{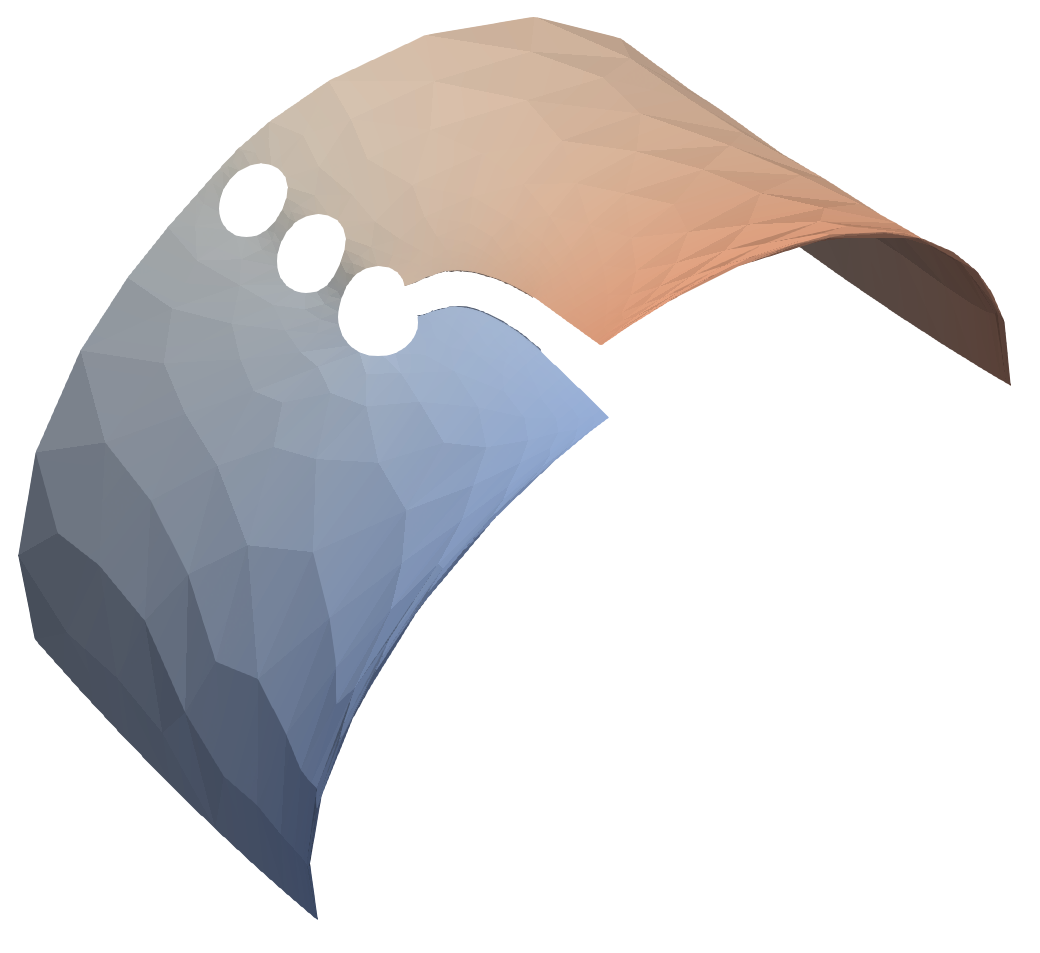}%
    \subcaption*{$t=1.07$}%
  \end{subfigure}\\[1em]%
  \begin{subfigure}{0.28\textwidth}
    \includegraphics[width=\textwidth]{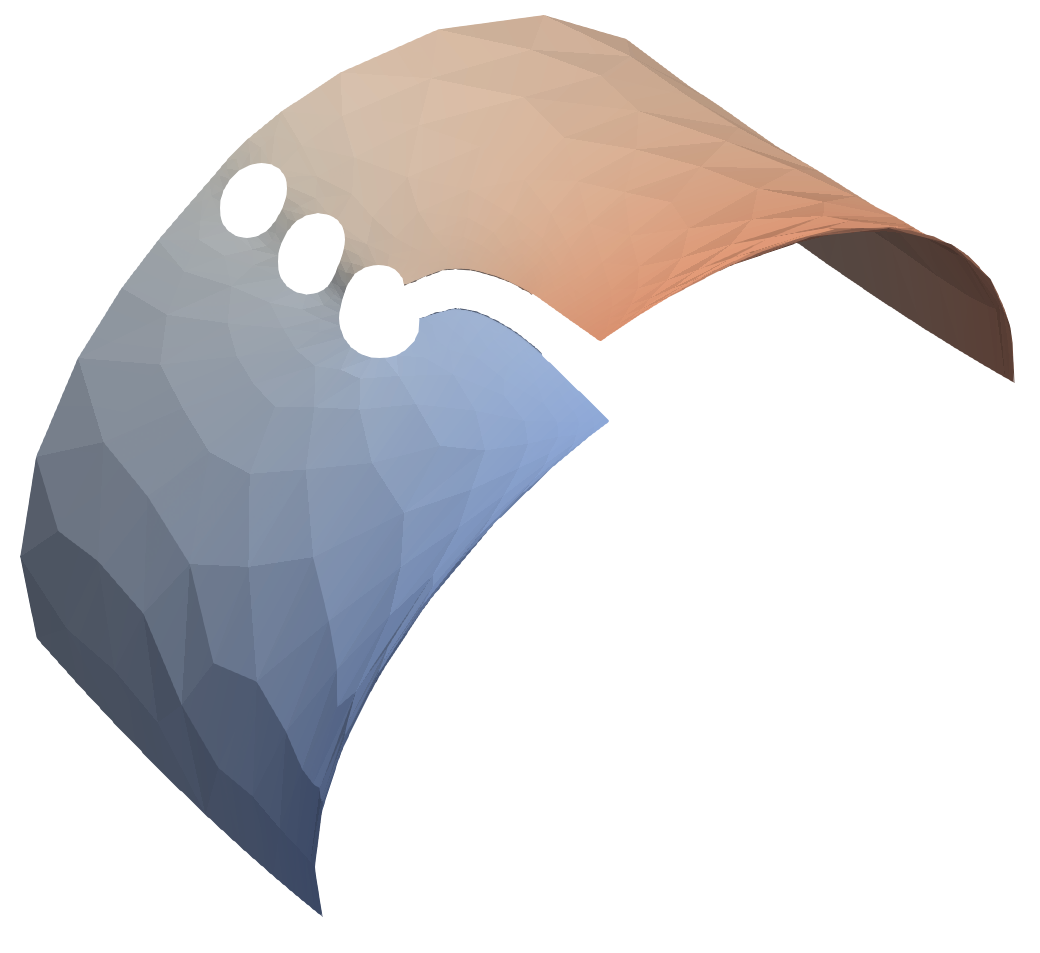}%
    \subcaption*{$t=1.19$}%
  \end{subfigure}%
  \hspace{2em}%
  \begin{subfigure}{0.28\textwidth}
    \includegraphics[width=\textwidth]{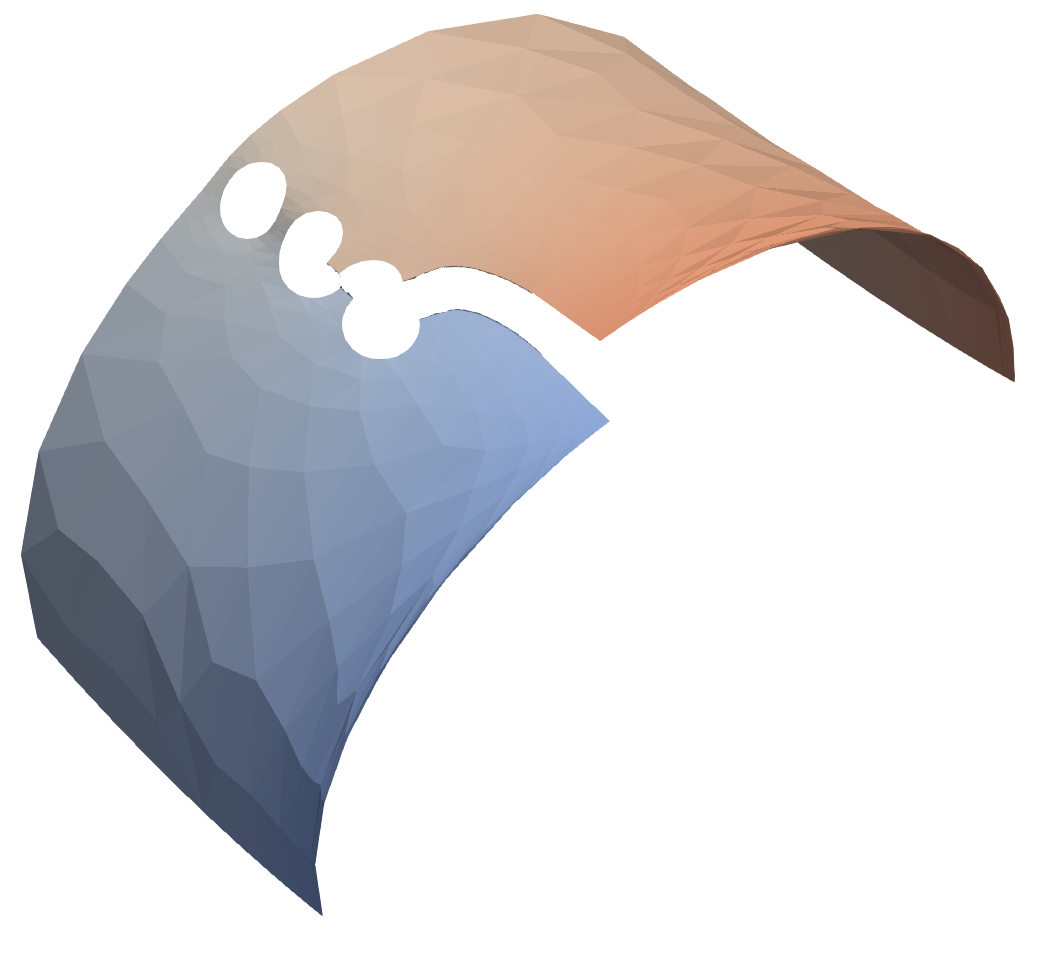}%
    \subcaption*{$t=1.20$}%
  \end{subfigure}%
  \hspace{2em}%
  \begin{subfigure}{0.28\textwidth}
    \includegraphics[width=\textwidth]{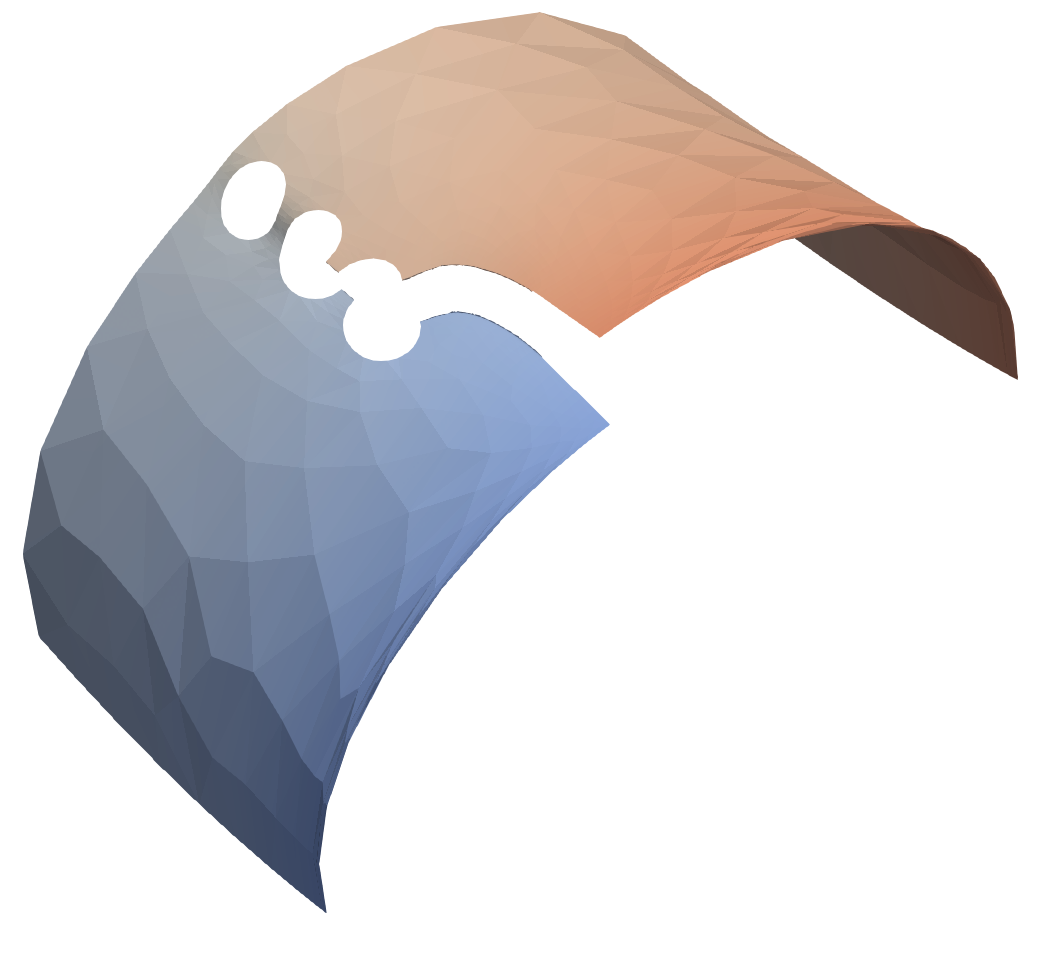}%
    \subcaption*{$t=1.30$}%
  \end{subfigure}\\[1em]%
  \begin{subfigure}{0.28\textwidth}
    \includegraphics[width=\textwidth]{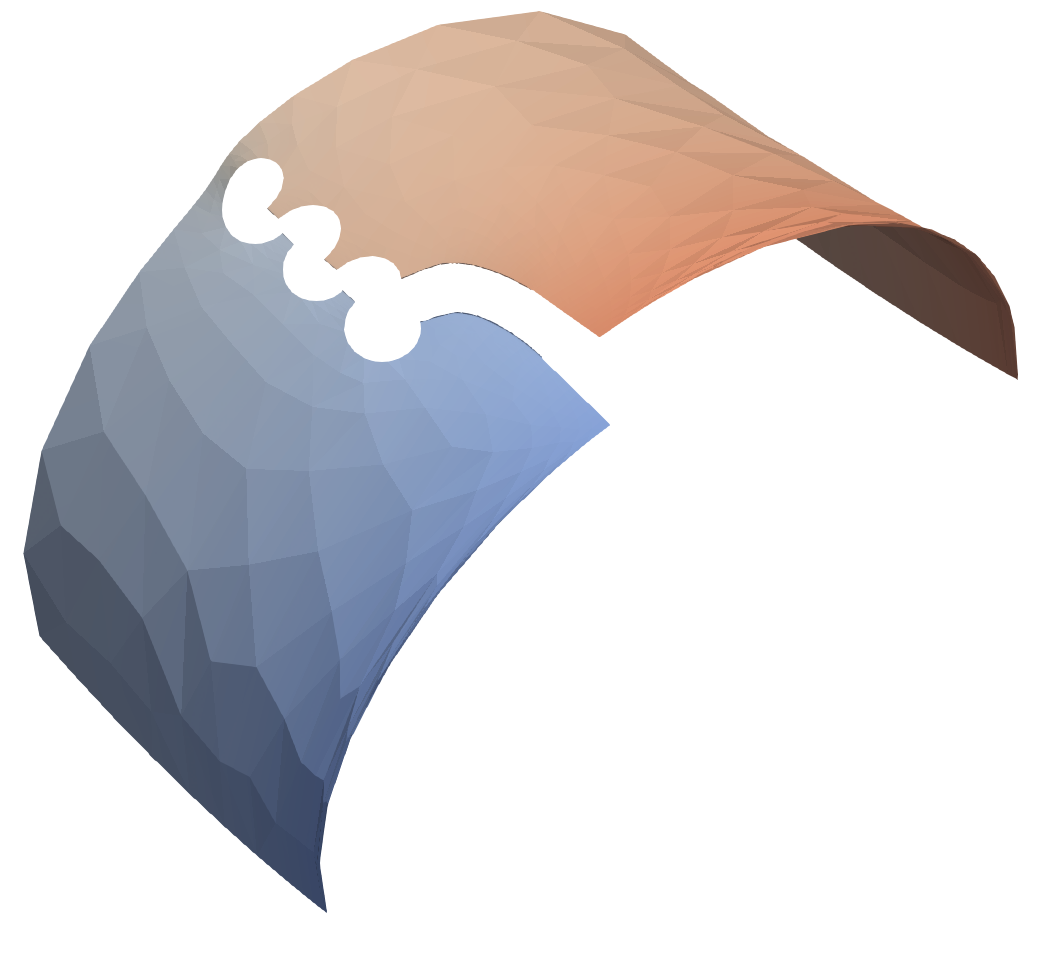}%
    \subcaption*{$t=1.31$}%
  \end{subfigure}%
  \hspace{2em}%
  \begin{subfigure}{0.28\textwidth}
    \includegraphics[width=\textwidth]{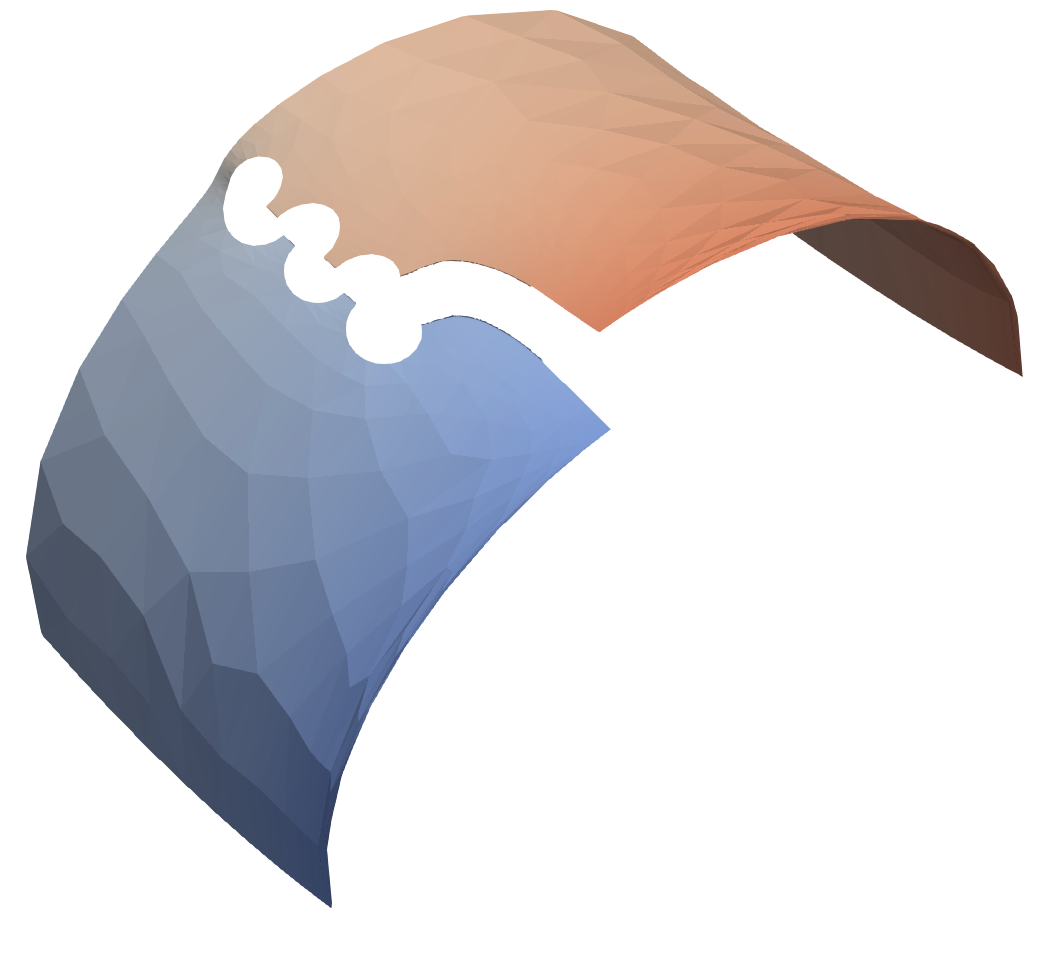}%
    \subcaption*{$t=1.45$}%
  \end{subfigure}%
  \hspace{2em}%
  \begin{subfigure}{0.28\textwidth}
    \includegraphics[width=\textwidth]{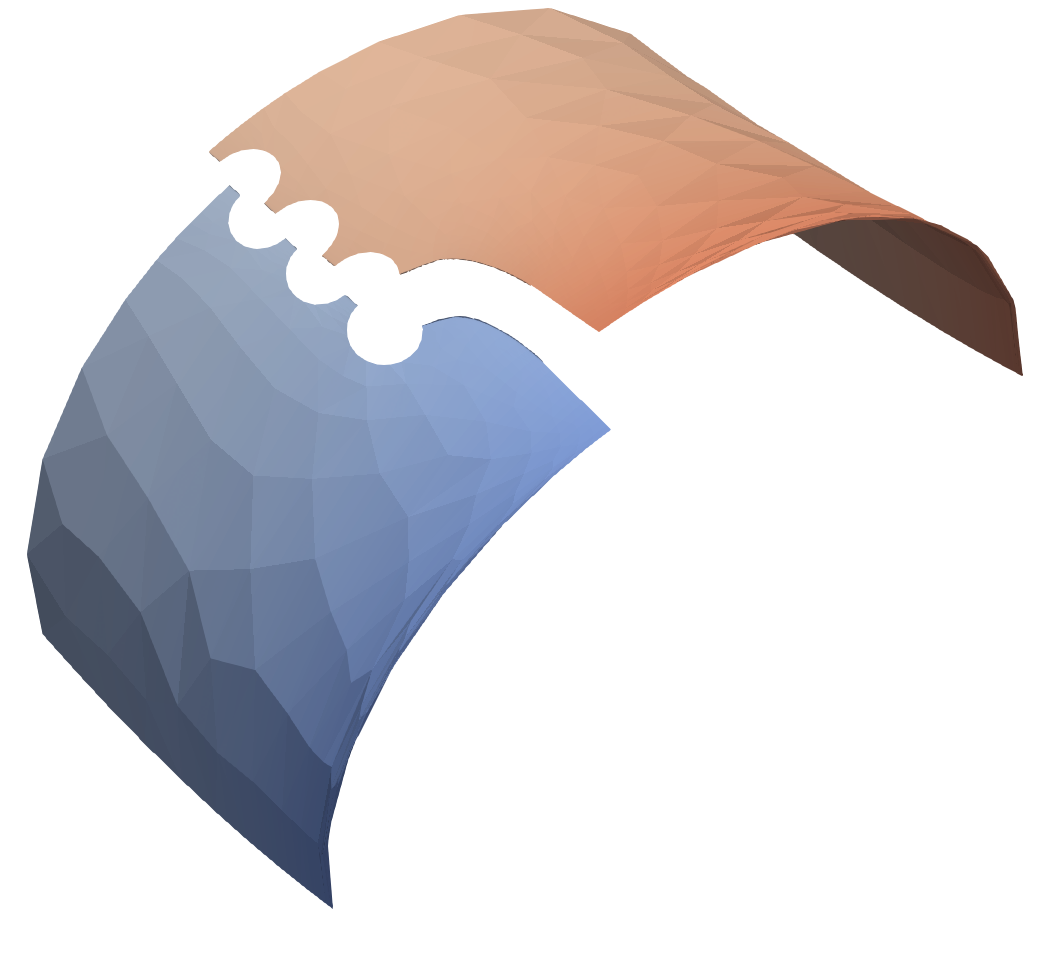}%
    \subcaption*{$t=1.46$}%
  \end{subfigure}%
  \caption{Piece of a cylinder: specimen deformation for the three-hole configuration at different times.}%
  \label{fig:smart-deformation}%
\end{figure}

Finally, we consider the effect, i.e., the deformation, induced by the crack propagation on the specimen for the three-hole configuration.
With this aim, we apply to the undeformed surface the computed displacement~$u_h$ along the normal direction~$a^3$.
However, for visualization purposes, we remove the points of the surface where the phase field is below a certain threshold, here set to  $10^{-2}$,
to model the physical crack.
Figure~\ref{fig:smart-deformation} gathers twelve snapshots tracking the whole evolution of the crack, from the undamaged initial configuration to the
complete breaking of the specimen.

\subsection{A Piece of a Sphere}
As a second test case, we consider a portion of a sphere with radius $R=1$. We adopt the parametrization
\begin{equation*}
  (x , y)  \mapsto R
  \begin{pmatrix}
    \cos x \cos y \\
    \sin x \cos y \\
    \sin y
  \end{pmatrix}
  \quad \text{for } (x, y) \in \omega\coloneq (- \bar x , \bar x) \times (-\bar y, \bar y)\,,
\end{equation*}
for some $0 < \bar x < \pi, 0 < \bar y < \frac{\pi}{2}$.
With this setting, we have
\begin{equation*}
  (a^{\alpha \beta}) = \displaystyle \frac{1}{R^2}
  \begin{pmatrix}
    \displaystyle \frac{1}{\cos^2 y} & 0 \\
    0 & 1 \\
  \end{pmatrix}, \quad
  (b_{\alpha \beta}) = -R
  \begin{pmatrix}
    \cos^2 y & 0 \\
    0 & 1  \\
  \end{pmatrix}
  \quad \text{and} \quad \sqrt{a} = R^2 \cos y \,.
\end{equation*}
\begin{figure}
  \centering
    \includegraphics[width=0.45\textwidth]{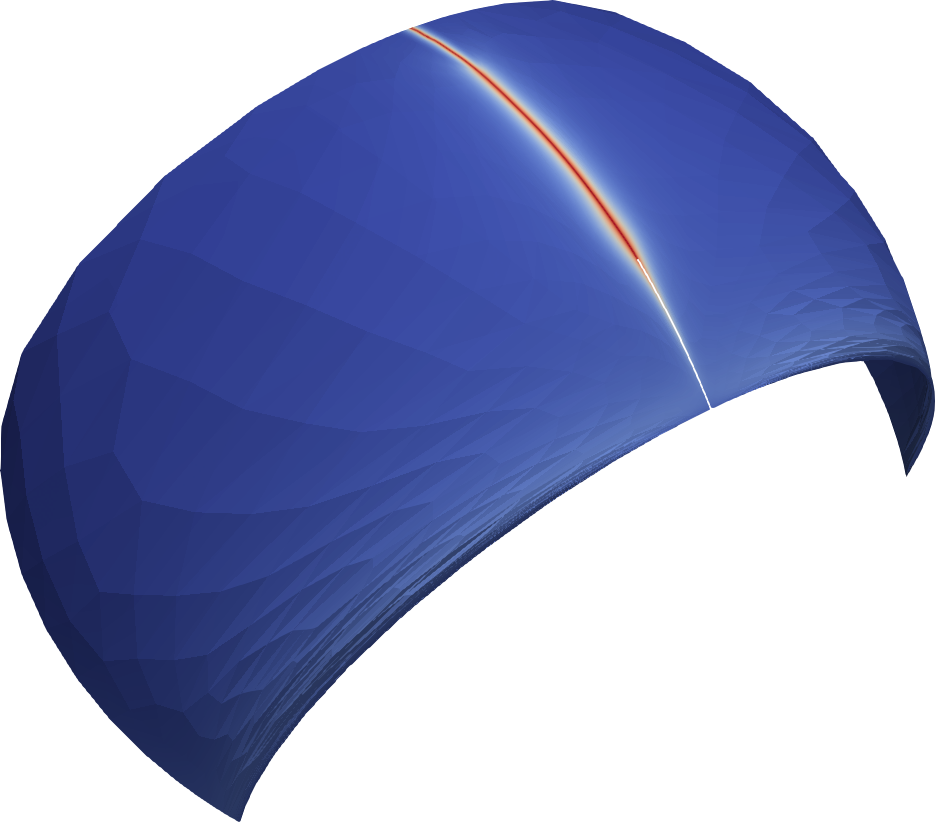}%
    \qquad
    \includegraphics[width=0.45\textwidth]{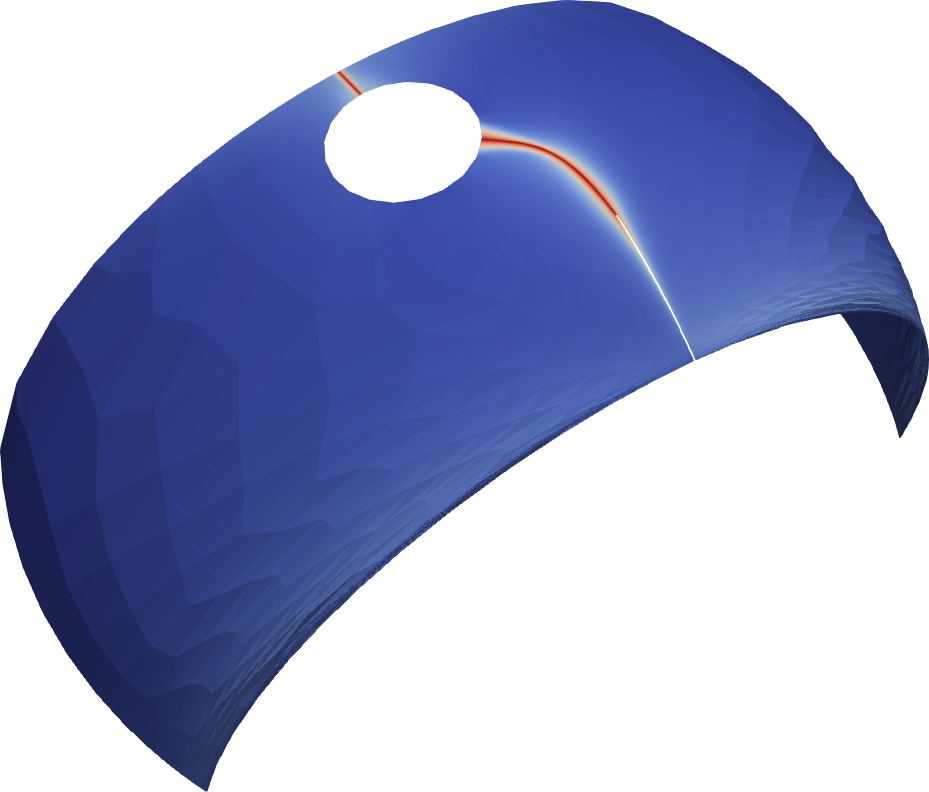}%
  \caption{Piece of a sphere: phase field for the plain (left) and for the single-hole (right) configuration.}%
  \label{fig:sphere}%
\end{figure}

We set $\bar x = \frac{\pi}{2}$ and we make two different choices for $\bar y$. Concerning the initial notch, we choose
$\Gamma \coloneq [-10^{-3} , 10^{-3}] \times [-\bar y, 0.3-\bar y]$ and we select $g$ as in \eqref{eq:bc}
for the Dirichlet boundary condition.

Figure~\ref{fig:sphere} shows on the left the final phase field at $t=2.38$ and for $\bar y = \frac{\pi}{6}$. Analogously as in the previous section,
we modify the plain configuration by digging a hole with center at $(-0.25,0.5)$ and radius $0.15$. The associated function $v_h$, for
$\bar y = \frac{\pi}{7}$, is displayed on the right of Figure~\ref{fig:sphere} for $t=2.64$. The choice for $\bar y$ avoids the generation of a secondary crack
along the Dirichlet boundary, consistently with what remarked for the piece of cylinder test case.



\appendix
\section{Proof of Theorem~\ref{thm:h1-phase-field}}\label{appendix}
\renewcommand{\thelemma}{\Alph{section}.\arabic{lemma}}

In order to prove Theorem~\ref{thm:h1-phase-field} we need the following two lemmas.

\begin{lemma}
	\label{lemma:liminf-10}
	Let $I \subset \R$, $f,g\in C^1(\bar I)$ with $f,g>0$ in $\bar I$. Assume that $(u_\eps, v_\eps) \to (u,v)$ in $L^1(I)$ as $\eps \to 0$ and that
	\begin{equation}
	\label{eq:liminf-10}
	\liminf_{\eps \to 0} \, \frac{\mu}{2} \int_I v_\eps^2 \abs{u_\eps'}^2 \,\dd x \\
	+ \kappa \int_I \bigg[ \frac{1}{4\eps}  (1 - v_\eps)^2 f +  \eps \abs{v_\eps'}^{2} g \bigg]\,\dd x < +\infty \,.
	\end{equation}
	Then, there holds $v=1$ a.e.~and
	\begin{equation}
	\label{eq:liminf-00}
	\int_{S_u} \sqrt{fg} \,\dd \HH^0 \leq \liminf_{\eps \to 0} \int_I \bigg[ \frac{1}{4 \eps} (1 - v_\eps)^2 f + \eps \abs{v_\eps'}^2 g \bigg] \,\dd x\,.
	\end{equation}
\end{lemma}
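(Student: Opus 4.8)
The plan is to run the classical one-dimensional Modica--Mortola / Ambrosio--Tortorelli lower-bound argument, exploiting the continuity of $f,g$ to localize around the (approximate) discontinuity set $S_u$ of $u$. Throughout I would first pass to a subsequence along which the quantity in \eqref{eq:liminf-10} converges to a finite limit, so that both $\frac\mu2\int_I v_\eps^2|u_\eps'|^2\,\dd x$ and $\int_I[\tfrac1{4\eps}(1-v_\eps)^2 f+\eps|v_\eps'|^2 g]\,\dd x$ are bounded by a constant $C$ along it; a further subsequence is extracted when convenient so that $u_\eps\to u$ and $v_\eps\to v$ a.e.\ in $I$. The claim $v=1$ a.e.\ is then immediate: the bound $\int_I(1-v_\eps)^2 f\,\dd x\le 4\eps\, C/\kappa\to 0$ together with $f\ge c>0$ on $\bar I$ gives $(1-v_\eps)\to 0$ in $L^2(I)$, and comparing with $v_\eps\to v$ in $L^1(I)$ forces $v=1$.

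For \eqref{eq:liminf-00} the starting point is the pointwise AM--GM estimate
\[
\tfrac1{4\eps}(1-v_\eps(x))^2 f(x)+\eps|v_\eps'(x)|^2 g(x)\;\ge\;\sqrt{f(x)g(x)}\,\bigl|(1-v_\eps)v_\eps'\bigr|\;=\;\sqrt{f(x)g(x)}\,\bigl|(\Theta\circ v_\eps)'(x)\bigr|,
\]
where $\Theta(s):=s-\tfrac{s^2}{2}$, so that $\Theta'=1-s\ge 0$, $\Theta(0)=0$, $\Theta(1)=\tfrac12$. Fixing any finite subset $\{x_1<\dots<x_N\}\subseteq S_u$, I would choose $r>0$ so small that the intervals $I_i:=(x_i-r,x_i+r)$ are pairwise disjoint and $\sqrt{fg}\ge\sqrt{f(x_i)g(x_i)}-\omega(2r)$ on $I_i$, with $\omega$ a modulus of continuity of $\sqrt{fg}$; then, using disjointness and nonnegativity of the integrand,
\[
\int_I\Bigl[\tfrac1{4\eps}(1-v_\eps)^2 f+\eps|v_\eps'|^2 g\Bigr]\,\dd x\;\ge\;\sum_{i=1}^N\bigl(\sqrt{f(x_i)g(x_i)}-\omega(2r)\bigr)\int_{I_i}\bigl|(\Theta\circ v_\eps)'\bigr|\,\dd x.
\]
Hence it suffices to prove $\liminf_{\eps\to0}\int_{I_i}\bigl|(\Theta\circ v_\eps)'\bigr|\,\dd x\ge 2\Theta(1)=1$ for each $i$; letting $r\to0$ and then taking the supremum over all finite subsets of $S_u$ recovers $\int_{S_u}\sqrt{fg}\,\dd\HH^0$, since $\HH^0$ is the counting measure (this also makes the argument oblivious to whether $S_u$ is finite).

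The core of the proof, and the step I expect to be the main obstacle, is showing that $v_\eps$ forms a deep ``well'' near each $x_i$, that is $m_\eps^{(i)}:=\min_{[x_i-r/2,\,x_i+r/2]}v_\eps\to 0$ along a subsequence. I would argue by contradiction: if $v_\eps\ge\delta>0$ on $[x_i-r/2,x_i+r/2]$ along a subsequence, then $u_\eps\in H^1$ there with $\int|u_\eps'|^2\le 2\delta^{-2}C/\mu$, so Cauchy--Schwarz gives $(u_\eps(t)-u_\eps(s))^2\le \tfrac{2C}{\mu\delta^2}(t-s)$ for all $s<t$ in that interval. On the other hand, since $x_i$ is an approximate jump point with values $u^\pm(x_i)$, $u^+(x_i)\ne u^-(x_i)$, for every small $\eta$ the set $\{y\in(x_i-\eta,x_i):|u(y)-u^-(x_i)|<\tfrac14|u^+-u^-|\}$ and its right-hand analogue have positive measure; intersecting with the full-measure set where $u_\eps\to u$, I pick $s_\eta<x_i<t_\eta$ with $t_\eta-s_\eta<2\eta$, $|u(t_\eta)-u(s_\eta)|\ge\tfrac12|u^+(x_i)-u^-(x_i)|>0$, and $u_\eps(s_\eta)\to u(s_\eta)$, $u_\eps(t_\eta)\to u(t_\eta)$. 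Passing to the limit in $\eps$ first and then sending $\eta\to0$ contradicts the bound above; note that the order of limits is essential here, the half-width $r/2$ being kept fixed while the blow-up $(t_\eta-s_\eta)^{-1}\to\infty$ does the work. Once the well is secured, I fix points $a\in(x_i-r,x_i-r/2)$ and $b\in(x_i+r/2,x_i+r)$ that are points of a.e.-convergence of $v_\eps$ (so $v_\eps(a),v_\eps(b)\to1$, using $v_\eps\to1$ in $L^1$), let $c_\eps\in[x_i-r/2,x_i+r/2]$ realize $m_\eps^{(i)}$, and estimate
\[
\int_{I_i}\bigl|(\Theta\circ v_\eps)'\bigr|\,\dd x\;\ge\;\bigl|\Theta(v_\eps(a))-\Theta(v_\eps(c_\eps))\bigr|+\bigl|\Theta(v_\eps(c_\eps))-\Theta(v_\eps(b))\bigr|\;\longrightarrow\;2\Theta(1)=1 \quad\text{as }\eps\to0,
\]
which closes the argument; the remaining effort is routine bookkeeping to keep the finitely many extracted subsequences consistent.
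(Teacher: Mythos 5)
Your argument is correct and follows essentially the same route as the paper's proof: the pointwise Young/AM--GM inequality producing $\sqrt{fg}\,\lvert(1-v_\eps)v_\eps'\rvert$, the Modica--Mortola primitive $\Theta(s)=s-s^2/2$ to capture a contribution of $1$ across each jump, localization on disjoint neighbourhoods of finitely many points of $S_u$, and the conclusion via continuity of $\sqrt{fg}$ and exhaustion of $S_u$. The only difference is that you spell out the ``deep well'' step (that $\min v_\eps\to 0$ near each jump point, via the elastic-energy bound and the H\"older estimate on $u_\eps$), which the paper delegates to the cited references of Focardi and Braides; your version of that step is sound.
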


\begin{proof}
	Up to a subsequence, we assume that the $\liminf$ in \eqref{eq:liminf-10} is actually a limit. All the involved limits in the proof are considered as $\eps \to 0$.

	We have $v=1$ a.e.~in~$I$, since otherwise $\frac{1}{4\eps} \int_I (1-v_\eps)^2 f \,\dd x \to + \infty$.
	In order to prove~\eqref{eq:liminf-00}, we fix~$y_0 \in S_u$ and $\delta > 0$ such that $B_\delta (y_0) \subset I $. Arguing as in \cite{Foc2001,Bra1998}, we find a sequence $(y_\eps)_{\eps>0}$ in $B_{\frac{\delta}{2}}(y_0)$ such that $v_\eps (y_\eps) \to 0$. Since $v_\eps \to 1$ a.e.~in~$I$, there exist $y^+, y^- \in B_\delta (y_0)$ such that $y^- < y_0 < y^+$ and $v_\eps (y^\pm) \to 1$.

	It is easy to compute that
	\begin{equation*}
	1 = 
	\lim_{\eps \to 0} \biggl(\int_{y_\eps}^{y^+} (1-v_\eps ) v_\eps' \,\dd x + \int_{y_\eps}^{y^-} (1-v_\eps ) v_\eps' \,\dd x \biggr) \leq \liminf_{\eps \to 0} \int_{B_\delta (y_0)} (1-v_\eps ) \abs{v_\eps'} \, \dd x \,.
	\end{equation*}
	Therefore, by Young's inequality we obtain
	\begin{align*}
	\inf_{B_\delta (y_0)} \sqrt{fg} &\leq \liminf_{\eps \to 0} \int_{B_\delta (y_0)} (1-v_\eps ) \abs{v_\eps'} \sqrt{fg} \,\dd x \\
	&\leq \liminf_{\eps\to 0} \int_{B_\delta (y_0)} \bigg[ \frac{1}{4 \eps} \bigl(1-v_\eps \bigr)^2 f + \eps \abs{v_\eps'}^2 g \bigg] \,\dd x \,.
	\end{align*}

	For each element in any discrete set $\{y_1, \dotsc, y_N\} \subset S_u$ (with $N  \leq \# S_u$) we can repeat the above argument for all $\delta > 0$ such that $B_\delta(y_k) \cap B_\delta(y_\ell) = \emptyset$ for $k \neq \ell$, in order to obtain
	\begin{equation}\label{e.smth}
	N \inf_{I} \sqrt{fg} \leq \sum_{i=1}^N \inf_{B_{\delta}(y_i)} \sqrt{fg} \leq \liminf_{\eps\to 0} \int_{I} \bigg[ \frac{1}{4 \eps} (1- v_\eps)^2 f + \eps \abs{v'_\eps}^2 g \bigg] \,\dd x\,.
	\end{equation}
	Because of \eqref{eq:liminf-10}, the right-hand side of~\eqref{e.smth} is uniformly bounded. Therefore,~$\# S_u$ must be finite and we can conclude~\eqref{eq:liminf-00} by taking the limit as $\delta \to 0$.
\end{proof}

The $\limsup$-inequality is first shown for a certain class of functions which are dense in the set $\GSBV^2(\om) \cap L^1(\om)$ (see \cite{CorToa1999}).
\begin{lemma}
	\label{lemma:limsup-10}
	Let $u\in \SBV^2(\om)$ be such that
	\begin{enumerate}
		\everymath{\displaystyle}
		\item
		$\overline{S_u}$ is the intersection of $\om$ with a finite number of pairwise disjoint $(n-1)$\nobreakdash-simplexes;
		\item
		$\HH^{n-1}\bigl(\overline{S_u} \setminus S_{u} \bigr) = 0$;
		\item
		$u\in W^{k, \infty} (\om \setminus \overline{S_u})$ for all $k\in \N$.
	\end{enumerate}
	Then, there exists a sequence $(u_\eps, v_\eps)$ converging to $(u,1)$ in $L^1(\om)$ as $\eps \to 0$ such that
	\begin{equation}\label{e:sup_inequality}
	\limsup_{\eps \to 0} \F_\eps (u_\eps, v_\eps) \leq \F (u,v) \,.
	\end{equation}
	\end{lemma}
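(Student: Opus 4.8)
The plan is to produce an explicit recovery sequence. First I would take $u_\eps \equiv u$ for every $\eps$, which instantly gives $u_\eps \to u$ in $L^1(\om)$ and settles every term of $\F_\eps$ except the dissipation one: $\half\int_\om b\abs{u_\eps}^2\,\dd x$ is independent of $\eps$, and since hypothesis (3) forces the approximate gradient $\nabla u$ to lie in $L^\infty(\om\setminus\overline{S_u})$, dominated convergence together with $\eta_\eps\to 0$, $0\le v_\eps\le 1$ and $v_\eps\to 1$ a.e.\ yields $\frac{\mu}{2}\int_\om(v_\eps^2+\eta_\eps)\nabla u^\top A\nabla u\,\dd x \to \frac{\mu}{2}\int_\om\nabla u^\top A\nabla u\,\dd x$. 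Thus the whole problem reduces to constructing $v_\eps\in H^1(\om;[0,1])$ with $v_\eps\to 1$ in $L^1(\om)$ and
\begin{equation*}
\limsup_{\eps\to 0}\ \kappa\int_\om\Bigl[\tfrac{1}{4\eps}(1-v_\eps)^2\sqrt a + \eps\,\nabla v_\eps^\top A\nabla v_\eps\Bigr]\dd x \le \kappa\int_{S_u}\sqrt{\nu_u^\top A\nu_u\sqrt a}\,\dd\HH^{n-1},
\end{equation*}
the right-hand side being exactly the surface part of $\G_0(u)$, i.e.\ of the right-hand side of \eqref{e:sup_inequality}.

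The function $v_\eps$ will be a localized optimal transition profile. Write $\overline{S_u} = \om\cap\bigcup_{i=1}^m\Sigma_i$ with the $\Sigma_i$ pairwise disjoint $(n-1)$-simplexes (hypothesis (1)) and $\nu_i$ the constant unit normal of $\Sigma_i$, and fix $\delta>0$ so small that the tubes $T_i \coloneq \{x\in\om:\dist(x,\Sigma_i)<\delta\}$ are pairwise disjoint. The model computation is the one-dimensional minimization of $J_{P,Q}[\phi] \coloneq \int_0^\infty[\tfrac14(1-\phi)^2 P + Q\abs{\phi'}^2]\,\dd t$ over $\phi(0)=0$, $\phi(\infty)=1$, for constants $P,Q>0$: Young's inequality gives $J_{P,Q}[\phi]\ge\sqrt{PQ}\int_0^1(1-\phi)\,\dd\phi = \tfrac12\sqrt{PQ}$, with equality for $\phi_{P,Q}(t) = 1-e^{-\frac12\sqrt{P/Q}\,t}$. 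Now partition each $\Sigma_i$ into sub-simplexes $\Sigma_i^k$ of diameter at most $\eta$, pick base points $\sigma_i^k\in\Sigma_i^k$, freeze $P_i^k\coloneq\sqrt{a(\sigma_i^k)}$ and $Q_i^k\coloneq\nu_i^\top A(\sigma_i^k)\nu_i$, and on the slab of $T_i$ over $\Sigma_i^k$ set $v_\eps(x) \coloneq \phi_{P_i^k,Q_i^k}\bigl(\dist(x,\Sigma_i)/\eps\bigr)$; on $\om\setminus\bigcup_i T_i$ set $v_\eps\equiv 1$ (capping the profile at the value it attains at $\dist=\delta$, which tends to $1$, keeps $v_\eps$ continuous and in $[0,1]$). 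Between adjacent slabs over $\Sigma_i^k$ and $\Sigma_i^{k+1}$, and around the $\HH^{n-1}$-null relative boundaries of the $\Sigma_i$ (hypothesis (2)), I interpolate the neighbouring profiles over thin collars of width $\eta$; this keeps $v_\eps$ Lipschitz, hence in $H^1(\om;[0,1])$, and — because the $\eps$-limit is taken before $\eta\to 0$ — the collar energy is $O(\eta)$ and thus negligible. Since $\int_{T_i}(1-v_\eps)\,\dd x \le C\eps\int_0^\infty(1-\phi_{P_i^k,Q_i^k})\,\dd t \to 0$, one obtains $v_\eps\to 1$ in $L^1(\om)$.

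For the dissipation estimate, on the slab over $\Sigma_i^k$ one has $\nabla v_\eps = \pm\tfrac1\eps\phi'\,\nu_i$, hence $\nabla v_\eps^\top A\nabla v_\eps = \tfrac1{\eps^2}\abs{\phi'}^2\,\nu_i^\top A(x)\nu_i$; slicing by the (flat, parallel) level sets of $\dist(\cdot,\Sigma_i)$, substituting $t=\eps s$, replacing $A(x)$ and $\sqrt{a(x)}$ by their frozen values up to the modulus of continuity $\omega(\eta)$ of $A$ and $a$ on $\bar\om$, and using that $\delta/\eps\to\infty$ so the profile realizes its full cost, one finds that the dissipation over the slab — counting the two sides of $\Sigma_i$, which contributes the factor $2$ turning $\tfrac12\sqrt{P_i^kQ_i^k}$ into $\sqrt{P_i^kQ_i^k}$ — converges as $\eps\to 0$ to $\kappa\HH^{n-1}(\Sigma_i^k)\bigl(\sqrt{P_i^kQ_i^k} + O(\omega(\eta))\bigr)$. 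Summing over $i,k$, then sending $\eta\to 0$ so that $\sqrt{P_i^kQ_i^k}\to\sqrt{\nu_u^\top A\nu_u\sqrt a}$ uniformly along $S_u$, the limit is precisely $\kappa\int_{S_u}\sqrt{\nu_u^\top A\nu_u\sqrt a}\,\dd\HH^{n-1}$; a diagonal argument in $(\eps,\eta)$ then delivers the sequence and \eqref{e:sup_inequality}.

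The main obstacle I anticipate is not the one-dimensional optimization — which is elementary — but the gluing: assembling the local profiles into a single Lipschitz $v_\eps$ on $\om$ that respects $0\le v_\eps\le 1$, matches across the interfaces between sub-slabs, and behaves correctly over the caps at the ends of the simplexes and over the $\HH^{n-1}$-negligible skeleton, all while showing that these transition regions add only $o(1)$ energy. This is exactly where the $x$-dependence of $A$ and $\sqrt a$ adds work beyond the classical Ambrosio–Tortorelli and Focardi constructions: the optimal profile now varies along $S_u$, so it must be re-frozen on a partition whose mesh is let to zero only after $\eps$, and the resulting error has to be absorbed through the uniform continuity of $A$ and $a$ on the compact set $\bar\om$.
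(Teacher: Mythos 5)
There is a genuine gap at the very first step: you cannot take $u_\eps\equiv u$. The functional $\F_\eps$ is defined to be $+\infty$ outside $H^1(\om)\times H^1(\om;[0,1])$, and a function $u\in\SBV^2(\om)$ with $\HH^{n-1}(S_u)>0$ does not belong to $H^1(\om)$; hence $\F_\eps(u,v_\eps)=+\infty$ for every $\eps$ and \eqref{e:sup_inequality} fails outright unless $S_u=\emptyset$. This is not a removable technicality but the heart of the Ambrosio--Tortorelli construction: the displacement must be regularized across the jump, e.g.\ by setting $u_\eps=(1-\phi_\eps)u$ with a cutoff $\phi_\eps$ equal to $1$ on a $\tfrac{\delta_\eps}{2}$-neighbourhood of $S_u$ and supported in a $\delta_\eps$-neighbourhood. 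The gradient of $u_\eps$ then blows up like $\delta_\eps^{-1}$ on a set of volume $\sim\delta_\eps$, and this must be compensated by the phase field: one needs $v_\eps=0$ on the cutoff region (so that the factor $v_\eps^2$ kills the blow-up there) and the residual term $\eta_\eps\int_\om\nabla u_\eps^\top A\nabla u_\eps\,\dd x\sim\eta_\eps/\delta_\eps$ must vanish, which forces a coupling such as $\delta_\eps=\sqrt{\eps\eta_\eps}$ (as the paper does) and in turn a modified profile $\sigma_\eps$ that stays at $0$ on an initial interval of length $\sim\delta_\eps/\eps$. None of this coupling between $u_\eps$ and $v_\eps$ appears in your proposal, and without it the elastic term cannot be controlled.

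Your construction of $v_\eps$ for the dissipation term is, by contrast, a legitimate alternative to the paper's. The paper absorbs the $x$-dependence of $A$ and $\sqrt a$ into a rescaled distance function $\tilde\tau(x)=\tau(x)/\varphi(\nabla\tau(x),x)$ and uses a single global profile plus the coarea formula, whereas you freeze the coefficients on a partition of $S_u$ of mesh $\eta$, use the exact one-dimensional minimizer for each frozen pair $(P_i^k,Q_i^k)$, and remove the freezing error via uniform continuity and a diagonal argument in $(\eps,\eta)$. Both routes reach $\kappa\int_{S_u}\sqrt{\nu_u^\top A\nu_u\sqrt a}\,\dd\HH^{n-1}$; yours trades the paper's single delicate chain of changes of variables for a gluing problem along the partition skeleton, which you correctly identify as the technical cost. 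But the recovery sequence for $u$ must be repaired as above before any of this matters.
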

\begin{proof}
	Throughout the proof, $C>0$ denotes an arbitrary constant independent of $\eps > 0$, which may vary from line to line, and the limits are considered as $\eps \to 0$.

	For the construction of a recovery sequence of $u$, we choose a smooth cut off function $\phi\colon \R \to \R$ with $\phi =1$ on $B_\half (0)$ and $\phi = 0$ on $\R \setminus B_1 (0)$. For all $x\in \om$, define $\tau(x) = \dist(x,S_u)$ and $\phi_\eps (x) = \phi (\frac{\tau(x)}{\delta_\eps})$  for all $\eps > 0$, where $\delta_\eps \coloneq \sqrt{\eps \eta_\eps}$. In this way, we have $\frac{\delta_\eps}{\eps} \to 0$ and $\frac{\eta_\eps}{\delta_\eps} \to 0$. Let us consider the functions $u_\eps = (1 - \phi_\eps) u$ on $\om$. Then, we have $u_\eps \in H^1(\om)$, $u_\eps = u$ on $\om \setminus B_{\delta_\eps} (S_u)$ and $u_\eps \to u$ in $L^1(\om)$.

	In order to construct the recovery sequence corresponding to $v=1$ a.e., we define $\sigma \colon [0,\infty) \to [0,1]$ by $\sigma(t) = 1 - \exp (-\frac{t}{2})$, which solves the initial value problem
	\begin{align*}
	\sigma' &= \half(1-\sigma) \,, \qquad \sigma(0) = 0 \,.
	\end{align*}
	We note that $\sigma$ is a strictly increasing, Lipschitz continuous function and $\sigma(t) \to 1$ as $t\to \infty$. For simplicity of notation, we set
	\begin{equation*}
	\varphi (\zeta , x) \coloneq  \left(\frac{\zeta^\top A \zeta}{\sqrt{a}}\right)^{\frac12} \quad \text{and} \quad
	\tilde\tau ( x)   = \frac{\tau}{ \varphi(\nabla \tau, x) } \quad \tforall \zeta \in \R^n, \, x\in \om\,.
	\end{equation*}
	We notice that by the properties of $A$ and by Section~3.2.34 in \cite{Fed1969} we can define $0<d \coloneq \inf_{ x \in \om} \varphi(\nabla\tau, x)$ and  $\infty > D \coloneq \sup_{x\in \om} \varphi(\nabla\tau, x) $.
	Furthermore, we set $\tilde \delta_\eps \coloneq \frac{\delta_\eps}{\eps d}$ for all $\eps >0$ and
	\begin{equation*}
	\rho_\eps \coloneq D\eps\Biggl(\tilde \delta_\eps - 2 \ln \biggl(\frac{\eps}{1+\eps} \biggr) \Biggr) \,,
	\end{equation*}
	so that $\rho_\eps\to 0$ as $\eps \to 0$.
	Now we define, for every $t >0$ and for every $x\in \om$,
	\begin{equation*}
	\sigma_\eps(t) \coloneq \left\{
	\begin{aligned}
	&0 && \text{for } t\in [0, \tilde \delta_\eps) \\
	&\min \bigl\{ 1, (1 + \eps) \sigma(t - \tilde \delta_\eps) \bigr\} && \text{otherwise}
	\end{aligned}
	\right. \quad \text{and} \quad v_\eps(x) \coloneq \sigma_\eps\biggl( \frac{\tilde\tau(x)}{\eps} \biggr) \,.
	\end{equation*}

	Now, the sequence $(u_\eps, v_\eps)$ will be used as the recovery sequence for $(u,1)$.
	It is easy to check that, for sufficiently small $\eps >0$, there holds $v_\eps = 1$ on~$\om \setminus B_{\rho_\eps}(S_u)$ and $v_\eps =0$ on $B_{\delta_\eps} (S_u)$.
	Moreover, $\nabla v_\eps = 0$ in $B_{\delta_\eps}(S_u)$ and in $\om \setminus B_{\rho_\eps} (S_u)$, so that
	\begin{align}
	\label{eq:h1-limsup-1}
	\F_\eps (u_\eps,v_\eps) = &  \int_{\om} b \abs{u_\eps}^2 \,\dd x + \int_{\om \setminus B_{\delta_\eps}(S_u)} v^2_\eps \nabla u_\eps^\top A \nabla u_\eps  \,\dd x \\
	& + \eta_\eps \int_{\om} \nabla u_\eps^\top A \nabla u_\eps  \,\dd x + \frac{1}{4 \eps} \int_{ B_{\delta_\eps}(S_u)} \sqrt{a} \,\dd x \nonumber \\
	& + \int_{B_{\rho_\eps} (S_u)\setminus B_{\delta_\eps} (S_u)}  \biggl[ \frac{1}{4 \eps} (1 - v_\eps)^2 + \eps \varphi^2 (\nabla v_\eps, x)\biggr] \sqrt{a} \,\dd x \,. \nonumber
	\end{align}

	Let us now estimate the integrals on the right-hand side of \eqref{eq:h1-limsup-1}, separately. Since $u_\eps \to u$ in $L^1(\om)$, we have
	\begin{equation}
	\label{eq:limsup-0}
	\int_\om b \abs{u_\eps}^2\,\dd x \to \int_\om b\abs{u}^2 \,\dd x \quad\text{as } \eps \to 0 \,.
	\end{equation}
	As shown in \cite{AmbTor1992,Foc2001,DalIur2013}, we observe that
	\begin{align}
	\label{eq:h1-limsup-2}
	\int_{\om \setminus B_{\delta_\eps}(S_u)} v_\eps^2 \nabla u_\eps^\top A \nabla u_\eps  \,\dd x &\leq \int_{\om} \nabla u^\top A \nabla u \,\dd x \,,\\
	%
	\label{eq:h1-limsup-4}
	\eta_\eps \int_{\om} \nabla u^\top_\eps A \nabla u_\eps  \,\dd x &\to 0 \quad \text{as } \eps \to 0 \,,\\
	%
	\label{eq:h1-limsup-5}
	\frac{1}{4 \eps} \int_{ B_{\delta_\eps}(S_u)}  \sqrt{a} \,\dd x
	&\to 0 \quad\text{as } \eps \to 0\,.
	\end{align}

	Concerning the last term in~\eqref{eq:h1-limsup-1}, we introduce the notation
	\begin{equation*}
	\K_\eps (v_\eps) \coloneq  \int_{B_{\rho_\eps}(S_u)\setminus B_{\delta_\eps}(S_u)} \biggl[  \frac{1}{4 \eps} (1 - v_\eps)^2 + \eps  \varphi^2 (\nabla v_\eps, x) \biggr] \sqrt{a} \,\dd x \,.
	\end{equation*}
	Precisely, we need to show that
	\begin{equation}
	\label{eq:h1-limsup-key}
	\limsup_{\eps \to 0}  \K_\eps (v_\eps) \leq \int_{S_u} \sqrt{\nabla v^\top A \nabla v \sqrt{a}} \,\dd \HH^{n-1} \,.
	\end{equation}
	This inequality, together with \eqref{eq:limsup-0}--\eqref{eq:h1-limsup-5}, allows us to conclude the assertion~\eqref{e:sup_inequality} by taking the $\limsup$ in \eqref{eq:h1-limsup-1}.

	By the assumption on $\overline{S_u}$, it holds $\overline{S_u} = \bigcup_{i=1}^N \overline{S_u^i}$ for some $N \in \N$ and for some pairwise disjoint $(n-1)$\nobreakdash-simplexes $\overline{S_u^1}, \dotsc, \overline{S_u^N}$, so that,
	for sufficiently small $\eps >0$,
	we can rewrite~$\K_\eps (v_\eps)$ as
	\begin{equation*}
	\K_\eps (v_\eps) =  \sum_{i=1}^N \int_{B_{\rho_\eps}(S_u^i)\setminus B_{\delta_\eps}(S_u^i)}  \biggl[ \frac{1}{4\eps} (1 - v_\eps)^2 + \eps \varphi^2 (\nabla\tau, x) \biggr] \sqrt{a} \,\dd x \,.
	\end{equation*}
	Hence, without loss of generality, we may assume that $\overline{S_u}$ itself is an $(n-1)$\nobreakdash-simplex. We consider the $(n-1)$-dimensional hyperplane $\nu_u^\perp$  which contains $S_u$.

	As illustrated in Figure~\ref{fig:split_integration}, we split the integration domain for~$\K_\eps$ in several parts, namely,
	\begin{equation*}
	S_u^{\pm\perp} \coloneq \{x \in \om : x = y \pm t \nu_u \text{ for some } y \in S_u \text{ and } t>0  \}\,, \quad S_u^\perp \coloneq S_u^{-\perp} \cup S_u^{+\perp},
	\end{equation*}
	and we consider
	\begin{equation}
	\label{eq:split-G}
	\K_\eps (v_\eps) = \K_\eps\rvert_{S_u^{+\perp}}  (v_\eps) + \K_\eps\rvert_{S_u^{-\perp}}  (v_\eps) + \K_\eps \rvert_{\om \setminus S_u^{\perp}}  (v_\eps) \,,
	\end{equation}
	where, for $U \subset \om$, we set
	\begin{equation}
	\label{eq:defG}
	\K_\eps\rvert_{U}  (v_\eps) \coloneq \int_{U \cap B_{\rho_\eps}(S_u)\setminus B_{\delta_\eps}(S_u)}  \biggl[ \frac{1}{4 \eps} (1-v_\eps)^2 + \eps \varphi^2 (\nabla v_\eps, x)\biggr] \sqrt{a} \,\dd x \,.
	\end{equation}

	\begin{figure}
		\centering
		\footnotesize
		\begin{tikzpicture}
		\newcommand{\deltaeps}{0.6}
		\newcommand{\rhoeps}{1.4}
		\draw [thick] (-2,0) to (2,0) node [below=7,left=8] {$S_u$};
		\draw [domain=90:270] plot ({\deltaeps * cos(\x)-2}, {\deltaeps * sin(\x)});
		\draw [domain=-90:90] plot ({\deltaeps * cos(\x)+2}, {\deltaeps * sin(\x)}) node [below=32, right=12] {$\partial B_{\delta_\eps} (S_u)$};
		\draw [domain=90:270] plot ({\rhoeps * cos(\x)-2}, {\rhoeps * sin(\x)});
		\draw [domain=-90:90] plot ({\rhoeps * cos(\x)+2}, {\rhoeps * sin(\x)}) node [below=72, right=25] {$\partial B_{\rho_\eps} (S_u)$};
		\draw (-2,\deltaeps) to (2,\deltaeps);
		\draw (-2,\rhoeps) to (2,\rhoeps);
		\draw (-2,-\deltaeps) to (2,-\deltaeps);
		\draw (-2,-\rhoeps) to (2,-\rhoeps);
		\draw [dashed] (-2,-\rhoeps - 0.6) to (-2, \rhoeps + 0.6);
		\draw [dashed] (2,-\rhoeps - 0.6) to (2, \rhoeps + 0.6);
		\draw [->, >=stealth] (-1,0) to (-1,0.5) node [below=10, right]{$\nu_u$};
		\draw node [left=40, above=45] {$S_u^{+\perp}$};
		\draw node [left=40, below=45] {$S_u^{-\perp}$};
		\draw [->, >=stealth] ({\deltaeps * cos(60) + 2}, {\deltaeps * sin(60)}) to ({(\deltaeps + 0.5) * cos(60) + 2}, {(\deltaeps + 0.5) * sin(60)}) node [right=5, below=3] {$\nabla \tau $};
		\end{tikzpicture}
		\caption{Splitting of the integration domain for $\K_\eps$.}

		\label{fig:split_integration}
	\end{figure}
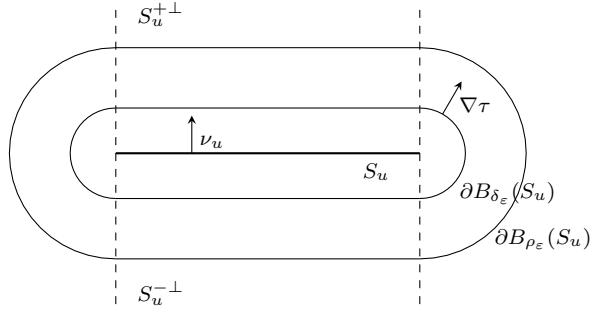

	First of all, note that, for all $x\in \om \setminus \overline{S_u}$, we have
	\begin{equation}
	\label{eq:gradient-v}
	\nabla v_\eps(x) =  \frac{1}{\eps} \sigma_\eps' \biggl( \frac{\tilde\tau(x)}{\eps} \biggr)   \biggl( \frac{\nabla \tau(x)}{\varphi(\nabla \tau(x), x)} - \frac{\tau(x) \nabla\bigl[ x\mapsto \varphi \bigl( \nabla \tau(x),x \bigr) \bigr] }{\varphi^2 (\nabla \tau(x),x)} \biggr) \,.
	\end{equation}
	In $S_u^{+\perp}$ we have that $\nabla \tau (x) = \nu_u$ is constant, and $x \mapsto \varphi (\nu_u,x)$ is Lipschitz continuous. Hence,~\eqref{eq:gradient-v} yields
	\begin{equation*}
	\varphi^2(\nabla v_\eps , x) \leq \frac{1}{\eps^2}\biggabs{\sigma_\eps' \biggl( \frac{\tilde\tau (x)}{\eps} \biggr)}^2 \bigl( 1 + C \tau (x) \bigr)^2 \,,
	\end{equation*}
	and from \eqref{eq:defG} we can estimate
	\begin{multline*}
	\K_\eps\rvert_{S_u^{+ \perp}} (v_\eps)
	\leq (1 + C\rho_\eps)^2 \int_{S_u^{+ \perp} \cap B_{\rho_\eps}(S_u)\setminus B_{\delta_\eps}(S_u)} \Biggl[ \frac{1}{4\eps} \Biggl( 1 - \sigma_\eps \biggl( \frac{\tilde \tau(x) }{\eps} \biggr) \biggr)^2 \\
	+ \frac{1}{\eps} \biggabs{\sigma_\eps' \biggl( \frac{\tilde\tau(x)}{\eps}  \Biggr)}^2 \Biggr] \sqrt{a} \,\dd x \,.
	\end{multline*}
	Together with the Coarea formula (see, e.g., Theorem 2.93~in \cite{AmbFusPal2000}), we obtain
	\begin{align}
	\label{eq:mess}
	\K_\eps\rvert_{S_u^{+ \perp}} (v_\eps)
	\leq (1+ C\rho_\eps)^2 \int_{\delta_\eps}^{\rho_\eps} & \int_{S_u^{+\perp} \cap \partial B_t (S_u) } \Biggl[ \frac{1}{4\eps}   \biggl(1 - \sigma_\eps \biggl( \frac{t}{\eps \varphi \bigl(\nu_u, x \bigr)} \biggr) \biggr)^2 \\
	& \qquad + \frac{1}{\eps}  \biggabs{\sigma_\eps' \biggl( \frac{t}{\eps \varphi \bigl(\nu_u ,x \bigr)}\biggr)}^2  \Biggr] \sqrt{a} \,\dd \HH^{n-1}\,\dd t \,. \nonumber
	\end{align}
	We apply the coordinate transformation $x \mapsto x + t \nu_u$, which maps $S_u$ to $S_u^{+\perp} \cap \partial B_t (S_u)$, to the inner integral of \eqref{eq:mess}, obtaining
	\begin{align}
	\label{eq:mess2}
	\K_\eps\rvert_{S_u^{+ \perp}} (v_\eps) \leq (1+ C\rho_\eps)^3  \int_{S_u} & \int_{\delta_\eps}^{\rho_\eps}  \Biggl[ \frac{1}{4\eps} \Biggl( 1 - \sigma_\eps \biggl( \frac{t}{\eps \varphi (\nu_u, x + t \nu_u )} \biggr) \Biggr)^2 \\
	& + \frac{1}{\eps}  \biggabs{\sigma_\eps' \biggl( \frac{t}{\eps \varphi (\nu_u , x + t \nu_u )}\biggr)}^2  \Biggr] \sqrt{a} \,\dd t \,\dd \HH^{n-1} \,, \nonumber
	\end{align}
	where we additionally used the fact that~$\sqrt{a}$ is Lipschitz and  bounded away from zero.

	Note that, by construction,
	\[
	\sigma'_\eps (t)  = \frac{1+\eps}{2} \exp\biggl(\frac{\tilde\delta_\eps-t}{2}\biggr) \quad \text{for }\tilde \delta_\eps < t < \tilde\delta_\eps -2\ln \biggl(\frac{\eps}{1+\eps}\biggr)
	\]
	and $\sigma'_\eps = 0$ otherwise. Thus, $\sigma'_\eps$ is decreasing in $(\tilde \delta_\eps , \infty)$ with supremum $(1 + \eps)/2$. Hence, with $\gamma_\eps\coloneq \tilde \delta_\eps \eps (\varphi (\nu_u,x) + C\rho_\eps)$ we can compute
	\begin{align*}
	\int_{\delta_\eps}^{\rho_\eps} \frac{1}{\eps} &  \biggabs{\sigma_\eps' \biggl( \frac{t}{\eps \varphi (\nu_u , x + t \nu_u )}\biggr)}^2   \,\dd t \\
	& \leq \int_{\gamma_\eps}^{\rho_\eps} \frac{1}{\eps}  \biggabs{\sigma_\eps' \biggl( \frac{t}{\eps( \varphi (\nu_u , x) + C \rho_\eps)}\biggr)}^2 \,\dd t + \int_{\delta_\eps}^{\gamma_\eps} \frac{(1+\eps)^2}{4\eps}  \, \dd t \\
	& \leq \int_{\delta_\eps}^{\rho_\eps} \frac{1}{\eps}  \biggabs{\sigma_\eps' \biggl( \frac{t}{\eps( \varphi (\nu_u , x) + C \rho_\eps)}\biggr)}^2 \,\dd t + C\tilde \delta_\eps\,,
	\end{align*}
	for a.e. $x\in \om$. Since $\sigma_\eps$ is increasing
	and $x \mapsto \varphi (\nu_u, x)$ is Lipschitz continuous on~$S_u$, we can estimate
	\begin{multline*}
	\!\!\!\!  \int_{\delta_\eps}^{\rho_\eps} \!\! \frac{1}{4\eps} \Biggl( 1 - \sigma_\eps \biggl( \frac{t}{\eps \varphi (\nu_u, x + t \nu_u )} \biggr) \Biggr)^2 \dd t
	\leq \int_{\delta_\eps}^{\rho_\eps} \!\! \frac{1}{4 \eps} \Biggl( 1 - \sigma_\eps  \biggl( \frac{t}{\eps (\varphi (\nu_u, x ) + C  \rho_\eps)} \biggr)\Biggr)^2 \dd t .
	\end{multline*}
	Therefore, inserting the two previous estimates in~\eqref{eq:mess2} we obtain
	\begin{align*}
	\K_\eps\rvert_{S_u^{+ \perp}} (v_\eps) \leq (1+ C\rho_\eps)^3 \int_{S_u} & \int_{\delta_\eps}^{\rho_\eps}  \Biggl[ \frac{1}{4\eps} \Biggl( 1 - \sigma_\eps\biggl( \frac{t}{\eps (\varphi (\nu_u, x ) + C  \rho_\eps)} \biggr)\Biggr)^2 \\
	&\!\!\! + \frac{1}{\eps}  \biggabs{\sigma_\eps' \biggl( \frac{t}{\eps( \varphi (\nu_u , x) + C \rho_\eps)}\biggr)}^2  \Biggr] \sqrt{a} \,\dd t \,\dd \HH^{n-1} + C\tilde\delta_\eps \,.
	\end{align*}

	We introduce another change of variables, namely $t \mapsto t  \eps (\varphi(\nu_u, x) + C \rho_\eps)$, so that
	\begin{align}
	\label{eq:mess3}
	\K_\eps\rvert_{S_u^{+ \perp}} (v_\eps) \leq (1+ C\rho_\eps)^4  & \int_{\tilde \delta_\eps}^{\tilde \delta_\eps - 2\ln\bigl(\frac{\eps}{1 + \eps} \bigr)} \bigg[  \frac{1}{4}\bigl( 1 - \sigma_\eps(t) \bigr)^2  + \bigabs{\sigma_\eps'(t)}^2 \bigg] \,\dd t \\
	&\qquad \times \int_{S_u}  \sqrt{\nu_u^\top A \nu_u \sqrt{a}} \,\dd \HH^{n-1} + C\tilde \delta_\eps \,. \nonumber
	\end{align}
	Using the explicit form of  $\sigma_\eps$, we compute the first integral on the right-hand side of~\eqref{eq:mess3} as
	\begin{multline*}
	\int_{\tilde \delta_\eps}^{\tilde \delta_\eps-2\ln\bigl(\frac{\eps}{1+\eps} \bigr)} \bigg[ \frac{1}{4}\bigl( 1 - \sigma_\eps(t) \bigr)^2  + \bigabs{\sigma_\eps'(t)}^2 \bigg]  \,\dd t \\
	\begin{aligned}
	&=\int_{0}^{-2\ln\bigl(\frac{\eps}{1 + \eps} \bigr)} \bigg[ \frac{1}{4} \bigl(1- (1 + \eps) \sigma(t) \bigr)^2 + \frac{1}{4} \bigl(1 - \sigma(t) \bigr)^2 \bigg] \,\dd t \\
	&\leq \half \int_{0}^{\infty}  \bigl( 1-  \sigma(t) \bigr)^2  \,\dd t
	= \int_{0}^{\infty}    \bigl( 1-  \sigma(t) \bigr) \sigma'(t) \,\dd t
	= \int_{0}^{1}    (1 - t) \,\dd t = \half \,.
	\end{aligned}
	\end{multline*}
	Hence, taking the limit in \eqref{eq:mess3} as $\eps \to 0$, we deduce
	\begin{equation*}
	\limsup_{\eps \to 0} \K_\eps\rvert_{S_u^{+ \perp}} (v_\eps) \leq \half \int_{S_u} \sqrt{\nu_u^\top A \nu_u \sqrt{a}} \,\dd \HH^{n-1}\,.
	\end{equation*}

	Repeating all the arguments above for $\K_\eps\rvert_{S^{-\perp}_u} (v_\eps)$ with $\nabla \tau (x) = -\nu_u$ on $S_u^{-\perp}$, we infer
	\begin{equation}
	\label{eq:mess5}
	\limsup_{\eps \to 0} \Bigl( \K_\eps\rvert_{S_u^{- \perp}} (v_\eps) + \K_\eps\rvert_{S_u^{+ \perp}} (v_\eps) \Bigr)
	\leq \int_{S_u} \sqrt{\nu_u^\top A \nu_u \sqrt{a}} \,\dd \HH^{n-1} \,.
	\end{equation}

	Finally, we show that $\K_\eps \rvert_{\om \setminus S_u^\perp} \to 0$ as $\eps \to 0$.
	For $x \in B_{\rho_{\eps}}(S_{u}) \setminus \overline{S_u}$, we claim that
	\begin{equation}
	\label{eq:bounded-gradient-phi}
	\Bigabs{ \nabla \bigl[ x \mapsto \varphi \bigl(\tau(x), x \bigr) \bigr] } \leq \frac{C}{\tau(x)} \,.
	\end{equation}
	Indeed, let $x, y \in B_{\rho_{\eps}}(S_u) \setminus \overline{S_u}$. We set $\overline \tau \coloneq \min\{\tau(x),\tau(y)\}$, $\overline x \coloneq \pi_{B_{\overline \tau} (S_u)}(x) =  \pi_{S_u} (x) + \overline \tau \nabla \tau (x)$ and $\overline y \coloneq \pi_{B_{\overline \tau} (S_u)}(y) = \pi_{S_u} (y) + \overline \tau \nabla \tau (y)$, where $\pi_{E}$ denotes the projection onto $E \subset \R^n$. Since the projection on a convex set  is Lipschitz continuous  with Lipschitz constant equal to one, we have that $\abs{\overline x-\overline y}\leq \abs{x-y}$ and
	\begin{equation}
	\label{eq:gradient-Lipschitz}
	\bigabs{\nabla \tau (x) - \nabla \tau(y)}
	= \frac{1}{\overline \tau} \Bigabs{\overline x - \pi_{S_u} ( x) - \bigl(\overline y - \pi_{S_u} (  y) \bigr)}
	\leq \frac{2}{\overline \tau} \abs{x-y} \,.
	\end{equation}
	Together with the positive definiteness of $A$, for $x, y \in B_{\rho_\eps} (S_u) \setminus S_u$ and $\eps$ sufficiently small we obtain
	\begin{align*}
	\Bigabs{ \varphi \bigl(\nabla \tau(x), x \bigr) - \varphi \bigl(\nabla \tau (y),y \bigr) }
	&\leq C \bigabs{ \nabla \tau (x) - \nabla \tau (y) }  + C \abs{x-y} \\
	&\leq \frac{C}{\min\{\tau(x), \tau(y)\}} \abs{x-y}  \,,
	\end{align*}
	which yields \eqref{eq:bounded-gradient-phi}.

	From \eqref{eq:gradient-v} we obtain
	\begin{equation*}
	\varphi^2(\nabla v_\eps(x) , x) \leq \frac{C}{\eps^2}\biggabs{\sigma_\eps' \biggl( \frac{\tilde\tau (x)}{\eps} \biggr)}^2 \quad\tforall x\in \om\setminus S_u^\perp  \,.
	\end{equation*}
	We plug the above inequality into the expression of $\K_\eps\rvert_{\om \setminus S_u^\perp} (v_\eps)$ and  apply again the Coarea formula, so that
	\begin{align}
	\label{eq:mess4}
	\K_\eps\rvert_{\om \setminus S_u^\perp} (v_\eps)
	\leq C \int_{\delta_\eps}^{\rho_\eps} \int_{\partial B_t (S_u) \setminus S_u^\perp} & \Biggl[ \frac{1}{4 \eps} \biggl(1 - \sigma_\eps \biggl( \frac{t}{\eps \varphi \bigl(\nabla\tau, x \bigr)} \biggr) \biggr)^2\\
	& + \frac{1}{\eps}  \biggabs{\sigma_\eps' \biggl( \frac{t}{\eps \varphi \bigl(\nabla\tau ,x \bigr)}\biggr)}^2  \Biggr] \sqrt{a}  \,\dd \HH^{n-1} \,\dd t \,.\nonumber
	\end{align}
	Next, we use the coordinate transformation $x \mapsto x + (t-\delta_\eps) \nabla \tau (x)$, which maps~$\partial B_{\delta_\eps} (S_u)$ onto~$\partial B_{t} (S_u)$. Note that $\nabla \tau(x) = \nabla \tau ( x+t\nabla \tau (x))$ and, from \eqref{eq:gradient-Lipschitz}, we infer that $\abs{\nabla^2 \tau } \leq \frac{C}{\delta_\eps}$ on $\partial B_{\delta_\eps} (S_u) \setminus S_u^\perp$, so that the Coarea factor is bounded by~$C{\rho_\eps}/{\delta_\eps}$. Hence, from \eqref{eq:mess4} we deduce
	\begin{multline*}
	\K_\eps\rvert_{\om \setminus S_u^\perp} (v_\eps) \leq \frac{C \rho_\eps}{\delta_\eps} \int_{\partial B_{\delta_\eps} (S_u) \setminus S_u^\perp} \int_{0}^{\rho_\eps - \delta_\eps}  \Biggl[ \frac{1}{4 \eps} \biggl(1 - \sigma_\eps  \biggl( \frac{t+\delta_\eps}{\eps \varphi (\nabla \tau, x + t \nabla \tau )} \biggr) \biggr)^2 \\
	+ \frac{1}{\eps} \biggabs{\sigma_\eps' \biggl( \frac{t+\delta_\eps}{\eps \varphi (\nabla \tau, x + t \nabla \tau  )}\biggr)}^2  \Biggr]   \sqrt{a} \,\dd t \,\dd \HH^{n-1} \,,
	\end{multline*}
	where we again use the Lipschitz continuity and the uniform strictly positive boundedness of~$a$, and additionally shift the integration domain with respect to~$t$.
	Repeating the same arguments used for the estimate of $\K_\eps \rvert_{S_u^{+\perp}}$, we obtain
	\begin{align*}
	\K_\eps\rvert_{\om \setminus S_u^\perp} (v_\eps)
	&\leq \frac{C \rho_\eps}{\delta_\eps}  \int_{{}_{\scriptstyle \partial B_{\delta_\eps} (S_u) \setminus S_u^\perp}} \!\!\!\!\!\!\!\!\!\!\!\!\!\!\!\!\!\!\!\!\! \sqrt{\nabla\tau^\top A \nabla \tau\sqrt{a}} \,\dd \HH^{n-1} \leq \frac{C \rho_\eps}{\delta_\eps} \HH^{n-1} \bigl( \partial B_{\delta_\eps} (S_u) \setminus S_u^\perp \bigr) \,.
	\end{align*}
	It is easy to check that $ \partial B_{\delta_\eps} (S_u) \setminus S_u^\perp \subset \partial B_{\delta_\eps} (\partial S_u)$, where $\partial S_u$ denotes the relative boundary of $S_u$ in the hyperplane $\nu_u^\perp$. Hence,
	\begin{equation}
	\label{eq:lastmess}
	\K_\eps\rvert_{\om \setminus S_u^\perp} (v_\eps) \leq \frac{C \rho_\eps}{\delta_\eps} \HH^{n-1} \bigl( \partial B_{\delta_\eps} (\partial S_u) \bigr) \leq C \rho_\eps \to 0 \quad \text{as } \eps \to 0\,.
	\end{equation}
	Summing up \eqref{eq:split-G}, \eqref{eq:mess5}, and \eqref{eq:lastmess}, we obtain the desired estimate~\eqref{eq:h1-limsup-key}.
\end{proof}\textsl{\textsl{}}

We now conclude the proof of Theorem~\ref{thm:h1-phase-field}.

\begin{proof} We provide a proof which folds for a generic dimension~$n$.

	We first show the $\liminf$-inequality. Let $(u_\eps,v_\eps)$ be a sequence converging to $(u,v)$ in $L^1(\om)$. We assume, without loss of generality, that
	\begin{equation*}
	\liminf_{\eps\to 0} \F_\eps(u_\eps,v_\eps) = \lim_{\eps\to 0} \F_\eps(u_\eps,v_\eps) < +\infty \,.
	\end{equation*}
	Since the norm is lower semicontinuous, we clearly have
	\begin{equation}
	\label{eq:liminf-20}
	\int_\om b \abs{u}^2 \,\dd x   \leq  \liminf_{\eps\to 0} \int_\om b \abs{u_\eps}^2 \,\dd x \,.
	\end{equation}
	Following the proof of Lemma~3.2 in \cite{Foc2001}, we obtain
	\begin{equation}
	\label{eq:liminf-21}
	\frac{\mu}{2} \int_\om \nabla u^\top A \nabla u \, \dd x \leq \liminf_{\eps \to 0} \, \frac{\mu}{2} \int_\om v_{\eps} \nabla u_\eps^\top A \nabla u_\eps \, \dd x \,,
	\end{equation}
	and by a slicing argument (see also \cite{Bra1998}) we obtain from Lemma~\ref{lemma:liminf-10}
	\begin{equation}
	\label{eq:liminf-22}
	\kappa \int_{S_u} \sqrt{\nu_u^\top A \nu_u \sqrt{a}} \,\dd\HH^{n-1} \leq \liminf_{\eps \to 0} \kappa \int_\om \bigg[ \frac{1}{4\eps}  (1 - v)^2 \sqrt{a} +  \eps \nabla v^\top A \nabla v \bigg] \,\dd x \,.
	\end{equation}
	Combining the inequalities \eqref{eq:liminf-20}--\eqref{eq:liminf-22} we deduce the required $\liminf$\nobreakdash-inequality.

	The $\Glimsup$\nobreakdash-inequality immediately follows from Lemma~\ref{lemma:limsup-10} using the density result in Theorem~3.1 in \cite{CorToa1999}.
\end{proof}

\section*{Acknowledgement}
S.A. wishes to thank the Technical University of Munich, where he worked during the preparation of this paper, with partial support from the SFB project TRR109 \emph{Shearlet approximation of brittle fracture evolutions}.

\noindent
S.B. acknowledges the support of the DFG through the International Research Training Group IGDK 1754 \emph{Optimization and Numerical Analysis for Partial Differential Equations with Nonsmooth Structures'}. Furthermore, S.B. appreciates the hospitality of MOX, Politecnico di Milano during several visits to S.M. and S.P.

\noindent
Finally, S.M. and S.P. gratefully acknowledge the partial financial support by the INdAM-GNCS 2020 Projects.

\bibliography{./Bibliography.bib}
\bibliographystyle{abbrv}

\end{document}